\documentclass{article}
\usepackage{authblk}
\usepackage[utf8]{inputenc}
\usepackage{amsmath}
  \usepackage{paralist}
  \usepackage{graphics} 
\usepackage{color,enumitem,graphicx}
\usepackage[colorlinks=true,urlcolor=blue,citecolor=red,linkcolor=blue,linktocpage,pdfpagelabels,bookmarksnumbered,bookmarksopen]{hyperref} 
  \usepackage{latexsym}
\usepackage{amssymb}
\usepackage{amsthm}
\usepackage{epsfig}
\everymath=\expandafter{\the\everymath\displaystyle}

\usepackage{graphicx}
\usepackage{amssymb}
 \usepackage{graphics} 
  \usepackage{epsfig} 
 \usepackage[colorlinks=true]{hyperref}
\hypersetup{urlcolor=blue, citecolor=red}

\usepackage[numbers,sort&compress]{natbib}

\usepackage{natbib}
\usepackage{graphicx}
\newtheorem{theorem}{Theorem}[section]
\newtheorem{corollary}{Corollary}
\newtheorem{lemma}[theorem]{Lemma}
\newtheorem{proposition}{Proposition}

\newtheorem{example}{Example}

\newtheorem{remark}{Remark}

\usepackage{lineno}


\title{Existence, nonexistence, and asymptotic behavior of solutions for $N$-Laplacian equations involving critical exponential growth in the whole $\mathbb{R}^N$}

\author[1]{Anderson L. A. de Araujo}
\author[2]{Luiz F. O. Faria}

\affil[1]{Universidade Federal de Vi\c{c}osa, Departamento de  Matem\'atica\\ 

Av. Peter Henry Rolfs, s/n, Vi\c{c}osa, MG, Brazil, CEP 36570-900.\\

E-mail: {\tt anderson.araujo@ufv.br}}
\affil[2]{Universidade Federal de Juiz de Fora, ICE, Departamento de Matem\'atica\\

Rua Jos\'e Louren\c{c}o Kelmer, s/n  Juiz de Fora, MG, Brazil, CEP 36036-900\\

E-mail: {\tt luiz.faria@ufjf.edu.br}}

\date{}                     
\setcounter{Maxaffil}{0}

\begin{document}
\setpagewiselinenumbers

\maketitle
\begin{abstract}
 In this paper, we are interested in studying the existence or non-existence of solutions for a class of elliptic problems involving the $N$-Laplacian operator in the whole space. The nonlinearity considered involves critical Trudinger-Moser growth. Our approach is non-variational, and in this way we can address a wide range of problems not yet contained in the literature. Even $W^{1,N}(\mathbb{R}
^N)\hookrightarrow L^\infty(\mathbb{R}
^N)$ failing, we establish $\|u_{\lambda}\|_{L^\infty(\mathbb{R}^N)} \leq  C
\|u\|_{W^{1,N}(\mathbb{R}^N)}^{\Theta}$ (for some $\Theta>0$), when $u$ is a solution. To conclude, we explore some asymptotic properties.

\end{abstract}
{\bf Keywords:} 
Dirichlet problem; Galerkin approximation; Critical growth; Moser-Trudinger inequality; $N$-Laplacian.\\
{\emph{2020 MSC}}: 35A01; 35A25; 35B09; 35B20; 35B33; 35B40; 35J92.

\section{Introduction}\label{S1}
Let $W_0^{1,p}(\Omega)$, $\Omega\subseteq \mathbb{R}^N$, $N\geq 2$, be the Sobolev space endowed with the norm $$\|u\|_{W^{1,p}(\Omega)}=\left(\int_{\Omega}(|\nabla u|^p+|u|^p)dx\right)^{\frac{1}{p}}.$$
If $p<N$, the critical growth $p^*=Np/(N-p)$ means that $\|u\|_{W_0^{1,p}(\Omega)}\hookrightarrow L^q(\Omega)$, $N\leq p \leq p^*$. The case $p=N$ is a borderline case in the sense of Sobolev embeddings. 
As is well-known, one has $\|u\|_{W_0^{1,N}(\Omega)}\hookrightarrow L^q(\Omega)$, $q\geq N$, but a function in $W_0^{1,N}(\Omega)$ may have a local singularity and this causes the failure of the embedding $W^{1,N}(\Omega)\not\subset  L^{\infty}(\Omega)$. If $\Omega$ is a bounded domain, another kind of maximal growth was proved by Yudovi\v{c} \cite{y},  Poho\v{z}aev \cite{Ph} and Trudinger \cite{t}. They proved, in an independent way, 
 that 
\begin{equation}
u\in W^{1,N}_0(\Omega) \Longrightarrow \int_{\Omega} e^{|u|^{N'}}dx<\infty, 
\end{equation}
where $N'=\frac{N}{N-1}$. Moreover, for any higher growth, the corresponding integral can be infinite for a suitable choice of $u$. After that, Moser \cite{m}  improved this assertion, showing that if $u\in W_0^{1,N}(\Omega)$,  then
\begin{equation}\label{TMz}
    \sup_{\|\nabla u\|_{L^N(\Omega)}\leq 1} \displaystyle \int_{\Omega} e^{\alpha|u|^{N'}}dx\left\{ \begin{array}{ll}
     \leq c|\Omega|,& \mbox{ if }\alpha\leq \alpha_N , \\
     =\infty ,&  \mbox{ if }\alpha > \alpha_N,
\end{array}\right. 
\end{equation}
\noindent where $\alpha_N=N\omega_{N-1}^{1/(N-1)}$, $c$ is a constant which depends on $N$, and $\omega_{N-1}$ is the measure of the unit sphere in $\mathbb{R}^N$. Inequality $\eqref{TMz}$ is now called Moser-Trudinger inequality and the term $e^{\alpha_N|u|^{N'}}$ is known as critical Moser-Trudinger growth.

This well-known Moser-Trundinger inequality has been generalized in many ways. In the case $\Omega=\mathbb{R}^N$, B. Ruf when $N=2$ in \cite{Ruf}, and Y.X. Li and B. Ruf in \cite{LR} for $N>2$, proved the following assertion
\begin{equation}\label{TM-N}
\sup_{\|u\|_{W^{1,N}(\mathbb{R}^N)}\leq 1} \displaystyle \int_{\mathbb{R}^N} \phi_N(\alpha|u|^{\frac{N}{N-1}})dx\left\{ \begin{array}{ll}
\leq C(\alpha,N)^{N'},& \mbox{ if }\alpha\leq \alpha_N ,  \\
=\infty ,&  \mbox{ if }\alpha > \alpha_N,
\end{array}\right. 
\end{equation}
where  $\alpha_N>0$ is a constant and  \begin{equation}\label{phi}
\phi_N(t)=e^t-\sum_{j=0}^{N-2}\frac{t^j}{j!}=\sum_{j=N-1}^{\infty}\frac{t^j}{j!}, \,\, t\geq 0.
\end{equation} 
\begin{remark}\label{R1}
	Notice that $\phi_N(t)$ is a increasing function.
\end{remark}

In this paper, we consider the existence, nonexistence, and asymptotic behavior of positive solutions for a class of $N$-Laplacian elliptic equations related to the critical growth \eqref{TM-N} in the whole space $\mathbb{R}^N$. More precisely, we are concerned with the following problem:  

\begin{equation}\label{P}
    -\Delta_N u+|u|^{N-2}u=\lambda a(x)|u|^{q-2}u+f(u) \,\,\mbox{ in }\,\,
\mathbb{R}^N,
\end{equation}
where $\lambda>
0$, $2\leq N$, $1<q<N$, $a$ is a positive function such that $a\in L^{\frac{N}{N-q}}(\mathbb{R}^N) \cap L^{\infty}(\mathbb{R}^N)$, $f:\mathbb{R}\to
\mathbb{R}$ is a continuous function satisfying the growth
condition 	\begin{equation}\label{growth}
0 \leq	f(t)t\leq a_1|t|^{p}\phi_N(\alpha|t|^{\frac{N}{N-1}}), \,\, t\in\mathbb{R},
\end{equation}
where $N<p<+\infty$, and $\alpha>0$. 

In this setting, we mean by a solution of problem $\eqref{P}$ any function
$u\in W^{1,N}(\mathbb{R}^N)\cap C_{loc}^1(\mathbb{R}^N)$, such that 
\begin{align*}
&\int_{\mathbb{R}^N} \left(|\nabla u|^{N-2}\nabla u
\nabla \phi   + |u|^{N-2}u\phi   
-\lambda a(x)|u|^{q-2}u\phi - f(u)\phi \right)dx=0 ,
\end{align*}
for all $\phi\in C_0^\infty(\mathbb{R}^N)$.

Besides existence, this paper is regarding explore some asymptotic properties of the obtained solutions. If $u_{\lambda}$ is a solution of \eqref{P}, we are interested in investigating the following properties:
\begin{equation}\label{conv11}
u_{\lambda}(x)\to 0 \mbox{ as } |x|\to\infty,
\end{equation}
\begin{equation}\label{conv2}
\|u_{\lambda}\|_{W^{1,N}(\mathbb{R}^N)}\rightarrow 0 \mbox{ as } \lambda \to 0^+,
\end{equation}
and
\begin{equation}\label{conv3}
\|u_{\lambda}\|_{L^{\infty}(\mathbb{R}^N)}\rightarrow 0 \mbox{ as } \lambda \to 0^+.
\end{equation}
Furthermore, concerning to problem \eqref{P}, we scrutinize the nonexistence of solution for $\lambda$ large enough.

The class of equations in \eqref{P} appears as a model for several problems in the fields of electromagnetism, astronomy, and fluid dynamics. They can be used to accurately describe the behavior of electric, gravitational, and fluid potentials. Problems of the form \eqref{P} are important in several applications to the study of evolution equations of $N$-laplacian type that appear in non-Newtonian fluids, turbulent flows in porus media and other contexts.

The case $N=2$ is also related to the stationary problem associated with the following initial value Schr\"odinger equation 
\begin{equation}\label{s1}
i \partial_t u+\Delta u+ \mu  g(u)=0, \;\; \mbox{ in } \mathbb{R}_t\times \mathbb{R}^2_x,
\end{equation}
with data
\begin{equation}\label{s2}
u_0:= u(0,\cdot) \in W^{1,2}(\mathbb{R}^2),
\end{equation}
where $u:= u(t,x)$ is a complex-valued function of $(t,x)\in \mathbb{R}\times \mathbb{R}^2$, and 
\begin{equation}
g(u):=u(e^{4\pi|u|^2}-1).
\end{equation}
Schr\"odinger equations involving exponential nonlinearities have several applications, such as the self-trapped beans in plasma \cite{LLT}. In \cite{C}, the author considered Schr\"odinger equation with decreasing exponential nonlinearity.
The stationary problem associated to \eqref{s1} is given by
\begin{equation}\label{s3}
\Delta \phi-\phi +g(\phi)=0, \;\; \phi\in W^{1,2}(\mathbb{R}^2).
\end{equation}
If $\phi\in W^{1,2}(\mathbb{R}^2)$ is a solution to \eqref{s3}, then $e^{it}\phi$ is a solution to \eqref{s1} called a soliton or a standing wave.  Equations of type \eqref{s3} arise in various other contexts of physics such that,
the classical approximation in statistical mechanics, constructive field theory,
false vacuum in cosmology, nonlinear optics, laser propagations, etc. They are
also called nonlinear Euclidean scalar field equations, see \cite{AD, Co, Fr, Gi}.




Recently, there has been considerable interest in the study of existence results for problems of the form
\begin{equation}\label{Pgeral}
\left\{
\begin{array}{lcc}
-\Delta_N u+V(x)|u|^{N-2}u = g(x,u) &\textup{in}& \mathbb{R}^N ,\\
u>0 &\textup{on}& \mathbb{R}^N,
\end{array}
\right.
\end{equation}
where $g(x,u)$ is continuous and behaves like $\phi_N(\alpha|u|^{\frac{N}{N-1}})$ as $|u| \rightarrow +\infty$, we would like to mention  \cite{Freitas1, doO,MS,Aouaoui,dOMS}. 

In general, the potential $V: \mathbb{R}^N \to \mathbb{R}$ is bounded away from zero, i.e.
\[
V(x)\geq c>0 \,\,\, x \in \mathbb{R}^N.
\]
If $V$ is large at infinity in some suitable sense, then the loss of compactness due to the unboundedness of the domain $\mathbb{R}^{N}$ can be overcome and vanishing phenomena can be ruled out. So, a natural framework for the function space setting of problem \eqref{Pgeral} is given by the subspace $E$ of $W^{1, N}\left(\mathbb{R}^{N}\right)$ defined as
\[
E:=\left\{u \in W^{1, N}\left(\mathbb{R}^{N}\right):\int_{\mathbb{R}^{N}} V(x)| u|^{N} d x<+\infty\right\}
\]
endowed with the norm
\[
\|u\|_{E}:=\left(\int_{\mathbb{R}^{N}}\left(|\nabla u|^{N}+V(x)|u|^{N}\right) d x\right)^{\frac{1}{N}} \forall u \in E.
\]
Under appropriate assumptions on the potential $V$, the embedding
\begin{equation}\label{compact1}
E \hookrightarrow L^{p}\left(\mathbb{R}^{N}\right)
\end{equation}
turns out to be compact. For instance, if
\begin{equation}\label{compact2}
V^{-1} \in L^{\frac{1}{N-1}}\left(\mathbb{R}^{N}\right),
\end{equation}
then the embedding \eqref{compact1} is compact for any $p \geq 1$ (see e.g. \cite[Lemma 2.4]{Y}), while assuming the weaker condition
\begin{equation}\label{compact3}
V^{-1} \in L^{1}\left(\mathbb{R}^{N}\right)
\end{equation}
the embedding \eqref{compact1} becomes compact only for $p \geq N$ (see \cite{Costa},  see also \cite{doOMM} for other compactness results).

The authors of \cite{doO,Y,dOMS,lg0}, considering a potential $V$ satisfying \eqref{compact2} or \eqref{compact3}, obtained existence results for equations of the form \eqref{Pgeral} and even more general equations. Their proofs rely, crucially, on the compact embeddings \eqref{compact1}, given by \eqref{compact2} and \eqref{compact3}  (in particular, on the compact embedding of $E$ into $L^{N}\left(\mathbb{R}^{N}\right)$).

Most papers treat problem \eqref{Pgeral} employing variational methods. Then, usually, it is assumed that $g$ has subcritical or critical growth and sometimes asking 
\begin{equation}\label{subcri}
    g(s) \geq c |s|^{p-1}, \mbox{ for each } s\geq 0 \mbox{ where } c>0 \mbox{ and } p>N 
\end{equation} 
are constants, see \cite{Freitas1,ASM}. 

Another common assumption on $g$ is the so-called Ambrosetti-Rabinowitz condition, that is,
$$
\exists R>0 \mbox{ and } \theta > N \mbox{ such that } 0< \theta G(x,s) \leq s g(x,s) \,\, \forall |s| \geq R \mbox{ and } x \in \mathbb{R}^N,
$$
where $G(s)=\int_0^s g(t)dt$, see \cite{Freitas1,doO,MS,lg0}. 


Even when the Ambrosetti-Rabinowitz is dropped, it is usually to be assumed some additional condition to obtain compactness of the Palais-Samle sequences or Cerami sequences. See, for instance, \cite{ASM,lg0}, where the authors assume, respectively, the set of conditions

\begin{equation}\label{ma1}\begin{array}c
\left.\begin{array}{r}
\exists t_0 > 0 \mbox{ and } M > 0 \mbox{ such that }
0 < G(x,s) \leq M g(x,s),\\ \forall |s| \geq t_0 \mbox{ and } x \in \Omega;\\
    0 < N G(x,s) \leq s g(x,s) \,\, \forall |s| \geq 0 \mbox{ and } x \in \Omega, 
\end{array}\right\}
\end{array}
    \end{equation}
and 
\begin{equation}\label{pseudoAR}
 \left. \begin{array}{r}
 H(x, t) \leqslant H(x, s) \mbox{ for all } 0<t<s, \forall x \in \mathbb{R}^{N}, \\
 \mbox{where } H(x, u)=u f(x, u)-N F(x, u).
 \end{array}\right\} 
\end{equation}

The case when the potential $V$ is constant, i.e. $V(x)=c$ for any $x \in \mathbb{R}^{N}$, is much less developed. In such a case, only a few existence results are known, and we should like to mention  \cite{MS} (and references therein), where the authors proved existence results by means Variational approach.
In \cite{Freitas1}, the author consider $V(x)=1$ and $g(x,t)=\lambda h(x)|t|^{q-2}t+f(t), \, x \in \mathbb{R}^{N}$, as in \eqref{growth}, with $f \in C^1(\mathbb{R},\mathbb{R})$ satisfying the Ambrosetti-Rabinowitz condition.\\

In the following, we state our main results.
\begin{theorem}\label{TP}
	Suppose that $f:\mathbb{R}\to
	\mathbb{R}$ is a continuous function satisfying   assumption $\eqref{growth}$. Then there exists $\lambda^{*}>0$ such that for every $\lambda\in (0,\lambda^{*})$ problem $\eqref{P}$ admits a positive solution
	$u_{\lambda}\in W^{1,N}(\mathbb{R}^N)\cap 	C^1_{loc}(\mathbb{R}^N)$. Furthermore, $$\|u_{\lambda}\|_{W^{1,N}(\mathbb{R}^N)}\rightarrow 0,$$ as $\lambda \rightarrow 0^+$.
\end{theorem}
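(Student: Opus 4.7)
The approach I would follow is the non-variational Galerkin scheme suggested by the keywords. I would first truncate \eqref{P} to a sequence of Dirichlet problems on the balls $B_n := B(0,n)$, replacing $|u|^{q-2}u$ and $f(u)$ by $(u^+)^{q-1}$ and $f(u^+)$ respectively, so that solutions are automatically nonnegative. Fixing a Schauder basis $(e_j)$ of $W_0^{1,N}(B_n)$ and setting $V_m = \operatorname{span}\{e_1,\ldots,e_m\}$, a Brouwer-type fixed-point argument applied to the continuous map $F_{n,m}:V_m \to V_m$ associated with the weak form of the truncated equation produces Galerkin approximations $u_{n,m} \in V_m$.

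The crucial a priori estimate follows from testing with $u = u_{n,m}$:
\[
\|u\|_{W^{1,N}(B_n)}^N = \lambda\int_{B_n} a(x)(u^+)^q\,dx + \int_{B_n} f(u^+)u^+\,dx.
\]
Hölder and $a \in L^{N/(N-q)}(\mathbb{R}^N)$ bound the first term by $C_1\lambda\|u\|^q$. For the second, writing $u = \|u\|\,v$ with $\|v\|_{W^{1,N}} \leq 1$, the growth hypothesis \eqref{growth}, Hölder, and the Li--Ruf inequality \eqref{TM-N} give $\int f(u^+)u^+ \leq C_2\|u\|^p$, provided $\alpha\|u\|^{N/(N-1)} < \alpha_N$. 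Since $q < N < p$, the resulting scalar inequality
\[
t^N \leq C_1\lambda\,t^q + C_2\,t^p, \qquad t := \|u\|_{W^{1,N}(B_n)},
\]
confines $t$ to a small range near $0$ when $\lambda$ is small. More precisely, $g(t) := t^N - C_2 t^p$ attains a strictly positive maximum on $(0,\infty)$, so I can fix $\lambda^* > 0$ such that for every $\lambda \in (0,\lambda^*)$ the smallest positive root $t_\lambda$ of $t^N = C_1\lambda t^q + C_2 t^p$ satisfies $\alpha t_\lambda^{N/(N-1)} < \alpha_N$ and $t_\lambda \to 0$ as $\lambda \to 0^+$. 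This confines the Galerkin scheme to the invariant ball $\{\|u\| \leq t_\lambda\}$, uniformly in $n$ and $m$.

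Passing to the limit $m \to \infty$ for fixed $n$ produces a weak limit $u_n$ solving the truncated Dirichlet problem on $B_n$; the subtle point is the critical term $\int f(u_{n,m}^+)v$, which I would handle via Vitali's theorem, using that the strict bound $\alpha\|u_{n,m}\|^{N/(N-1)} < \alpha_N$ implies uniform boundedness of $\phi_N(\alpha|u_{n,m}|^{N/(N-1)})$ in some $L^s(B_n)$ with $s>1$ (by applying \eqref{TM-N} to a slightly enlarged exponent). Extending $u_n$ by zero and sending $n \to \infty$ yields $u_\lambda \in W^{1,N}(\mathbb{R}^N)$ solving \eqref{P}; the decay of $a$ at infinity (from $a \in L^{N/(N-q)}(\mathbb{R}^N)$) controls the non-compact tail. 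Standard $N$-Laplacian regularity (DiBenedetto--Tolksdorf) then places $u_\lambda \in C^1_{loc}(\mathbb{R}^N)$, and the strong maximum principle, combined with a positive sub-solution built from the genuinely sublinear term $\lambda a(x)u^{q-1}$ (available precisely because $q < N$), yields $u_\lambda > 0$ and rules out the trivial limit. The uniform bound $\|u_\lambda\|_{W^{1,N}(\mathbb{R}^N)} \leq t_\lambda \to 0$ gives the stated asymptotic.

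\textbf{Main obstacle.} The principal difficulty is the critical exponential growth: passing to the limit in $\int f(u_{n,m}^+)v$ hinges on the uniform strict inequality $\alpha\|u_{n,m}\|^{N/(N-1)} < \alpha_N$, which is exactly what forces the smallness condition $\lambda < \lambda^*$. A secondary difficulty is excluding the trivial solution $u \equiv 0$ (which satisfies the truncated equation), which compels an explicit use of the sublinear term via a strict sub-solution or a degree-theoretic argument at the Galerkin step.
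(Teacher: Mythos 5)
Your overall strategy is exactly the paper's: Galerkin approximation on the balls $B_n$ via a Brouwer-type zero lemma (Lemma \ref{prop1}), a smallness threshold $\lambda^*$ calibrated so that the Li--Ruf inequality \eqref{TM-N} controls the critical term, positivity via comparison with the purely sublinear problem, and the asymptotic $\|u_\lambda\|_{W^{1,N}}\to 0$ from the tested identity. However, two steps as you describe them would not go through. The first is the non-triviality mechanism. You correctly flag that $u\equiv 0$ solves the truncated problem, but ``a positive sub-solution built from the sublinear term'' does not by itself rule out the trivial limit: the comparison principles available here (Propositions \ref{teorsubsup} and \ref{com:Dirichlet}, of D\'{\i}az--Saa type, for $g(t)/t^{N-1}$ decreasing) require the \emph{supersolution} to be positive, with controlled quotients $u_1/u_2,\,u_2/u_1$ — which is precisely what is in doubt. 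The paper breaks this circularity by perturbing the approximate problems with a strictly positive term $\varphi/n$, so that each approximate solution is $\not\equiv 0$, hence strictly positive with $\partial u_n/\partial\nu<0$ by the strong maximum principle and Hopf lemma; only then can it be compared from below with the solution $u_0$ of $-\Delta_N u+u^{N-1}=\lambda a_D u^{q-1}$ given by Lemma \ref{le2}, and the lower bound $u_n\ge u_0$ survives the limit. Without some such device (or a genuine degree argument, which you mention but do not carry out) your limit could vanish.

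The second issue is that the scalar inequality $t^N\le C_1\lambda t^q+C_2 t^p$ does \emph{not} confine $t$ near $0$: since $p>N$ it also holds for all large $t$, so the admissible set is a small interval near $0$ together with an unbounded branch. Smallness must be built into the Brouwer step: one fixes a single radius $\varrho$ (independent of $n$, $m$ and the domain, chosen so that both the Moser--Trudinger constraint \eqref{tm} and $\varrho^N-C_2\varrho^p\ge\varrho^N/2$ hold), proves $\langle F(\xi),\xi\rangle>0$ on the sphere $\|\xi\|=\varrho$ for $\lambda<\lambda^*:=\varrho^{N-q}/(4K_1)$, and obtains a zero \emph{inside} that ball. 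Your ``invariant ball'' phrasing suggests you intend this, but as written the estimate is an a posteriori identity for an exact Galerkin zero and does not select the small branch. Two further points are glossed over but fixable: $C^1_{loc}$ regularity of the limit (and the uniform local convergence) requires a uniform local $L^\infty$ bound on the $u_n$, obtained in the paper by Moser iteration (Proposition \ref{prop:bdd-2}), since $f(u_n)$ is a priori controlled only in $L^{N'}$; and the paper additionally replaces $f$ by Lipschitz (Strauss) approximations, a technical convenience for the continuity of the Galerkin map and the boundary regularity of the approximate solutions that your sketch omits.
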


\begin{proposition} \label{prop-n}
	Suppose that $f(t) = |t|^{p-1}\phi_N(\alpha|t|^{\frac{N}{N-1}})$ and let
	\[
	\lambda^*=\sup\{\lambda>0; \eqref{P} \mbox{ has a solution } u_{\lambda} \mbox{ in } W^{1,N}(\mathbb{R}^N)\}.
	\]
	Then $\lambda^* < \infty$.
\end{proposition}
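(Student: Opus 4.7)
The plan is to test the equation against $\psi^{N}/u^{N-1}$ for a fixed compactly supported $\psi$ and to combine Picone's inequality for the $N$-Laplacian with the elementary lower bound $\phi_{N}(s)\geq s^{N-1}/(N-1)!$ that follows directly from the power series \eqref{phi}. This reduces nonexistence to an algebraic bound on $\lambda$.

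Suppose $u=u_{\lambda}$ is a nontrivial nonnegative solution of \eqref{P}. Since $-\Delta_{N}u+u^{N-1}\geq 0$, the strong maximum principle for the $N$-Laplacian forces $u>0$ pointwise in $\mathbb{R}^{N}$. Fix $\psi\in C_{c}^{\infty}(\mathbb{R}^{N})$ with $\psi\geq 0$, $\psi\not\equiv 0$, and with support contained in a ball where $a\geq c_{0}>0$ (possible because $a$ is positive). As $u$ is strictly positive and of class $C^{1}_{\mathrm{loc}}$, the function $\psi^{N}/u^{N-1}$ belongs to $W^{1,N}(\mathbb{R}^{N})$ with compact support, so by density it is admissible as a test function. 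Inserting it in the weak formulation of \eqref{P} gives
$$\int_{\mathbb{R}^{N}} |\nabla u|^{N-2}\nabla u\cdot\nabla\!\left(\frac{\psi^{N}}{u^{N-1}}\right) dx + \int_{\mathbb{R}^{N}}\psi^{N}\,dx = \int_{\mathbb{R}^{N}}\psi^{N}\!\left(\lambda a(x) u^{q-N}+\frac{f(u)}{u^{N-1}}\right) dx.$$

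By the Allegretto--Huang Picone inequality for the $N$-Laplacian, the first integral on the left is bounded above by $\int_{\mathbb{R}^{N}}|\nabla\psi|^{N}\,dx$. For the right-hand side, $\phi_{N}(s)\geq s^{N-1}/(N-1)!$ yields $f(t)/t^{N-1}\geq c_{1}t^{p}$ with $c_{1}:=\alpha^{N-1}/(N-1)!$, and the weighted AM--GM inequality produces, for every $u>0$,
$$\lambda a(x) u^{q-N}+c_{1}u^{p} \,\geq\, K\,\lambda^{\beta}a(x)^{\beta},\qquad \beta:=\frac{p}{p+N-q}\in(0,1),$$
with $K=K(N,p,q,\alpha)>0$. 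Combining these estimates we deduce
$$K\lambda^{\beta}\int_{\mathbb{R}^{N}}\psi^{N}a^{\beta}\,dx \,\leq\, \int_{\mathbb{R}^{N}}|\nabla\psi|^{N}\,dx + \int_{\mathbb{R}^{N}}\psi^{N}\,dx.$$
Since the integral on the left is strictly positive by the choice of $\psi$, $\lambda$ is bounded above by a constant depending only on $\psi$, $a$, and the structure constants; hence $\lambda^{*}<\infty$.

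The main technical subtleties are confirming that $\psi^{N}/u^{N-1}$ is admissible and that Picone's inequality applies in its integrated form; both rely on strict positivity and local $C^{1}$ regularity of $u$, which are ensured by the strong maximum principle and the regularity part of the notion of solution. Once these are in place, what remains is a purely algebraic estimate via weighted AM--GM.
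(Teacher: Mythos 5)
Your argument is correct, and it takes a genuinely different route from the paper. Both proofs hinge on the same structural fact: since $q<N<p$ and $\phi_N(s)\ge s^{N-1}/(N-1)!$, the right-hand side $\lambda a(x)t^{q-1}+f(t)$ dominates $C_\Lambda\, t^{N-1}$ with a constant $C_\Lambda$ that tends to infinity with $\lambda$ (the paper obtains this by minimizing $Q(t)=P_1(t)t^{-(N-1)}$; your weighted AM--GM makes it quantitative, $C_\Lambda\gtrsim \lambda^{\beta}a(x)^{\beta}$ with $\beta=p/(p+N-q)$). Where you diverge is in how this is turned into a contradiction. The paper fixes a ball $B_R(0)$, notes that for $\lambda$ large the solution is a positive supersolution of $-\Delta_N\omega=(\sigma_1+\delta)|\omega|^{N-2}\omega$, builds an actual solution by the sub/supersolution method between $\varepsilon\varphi_1$ and $u_{\lambda_0}$, and contradicts the isolation of the principal eigenvalue $\sigma_1$. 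You instead test against $\psi^{N}/u^{N-1}$ and invoke the Allegretto--Huang Picone inequality, which yields the explicit bound $K\lambda^{\beta}\int\psi^{N}a^{\beta}\,dx\le\int(|\nabla\psi|^{N}+\psi^{N})\,dx$ directly. Your route is more elementary and self-contained (no eigenvalue theory, no sub/supersolution machinery, no isolation result from \cite{Lin}), and it produces an explicit upper bound for $\lambda^*$; in essence, Picone's inequality is the standard proof that a positive supersolution of the eigenvalue problem with parameter above $\sigma_1$ cannot exist, so you have internalized the step the paper outsources. Two small points to tighten: the justification that $\psi^{N}/u^{N-1}$ is admissible should note that $\min u>0$ on the compact support of $\psi$ (by continuity and the strong maximum principle of Pucci--Serrin applied to $-\Delta_N u+u^{N-1}\ge 0$), and, as in the paper's own proof, the statement implicitly concerns nontrivial (positive) solutions, since $u\equiv 0$ solves \eqref{P} for every $\lambda$ when $f(t)=|t|^{p-1}\phi_N(\alpha|t|^{\frac{N}{N-1}})$.
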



\begin{theorem}\label{T:infinity}
	Suppose that $f:\mathbb{R}\to
	\mathbb{R}$ is a continuous function satisfying   assumption $\eqref{growth}$.
	Then any
	solution $u_{\lambda}$, $\lambda\in (0,\lambda^{*})$, given by Theorem \ref{TP}, satisfies
	$u_{\lambda}(x)\to 0$ as $|x|\to\infty$.
\end{theorem}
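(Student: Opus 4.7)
The plan is to reduce the claim to a local sup-estimate of the form $\|u_\lambda\|_{L^\infty(B_{1/2}(x_0))} \leq \Phi(\|u_\lambda\|_{W^{1,N}(B_1(x_0))})$ with $\Phi$ non-decreasing and vanishing at $0$. Once this is in place, the conclusion is immediate: since $u_\lambda \in W^{1,N}(\mathbb{R}^N)$, absolute continuity of the Lebesgue integral gives
$$\int_{B_1(x_0)} \left(|\nabla u_\lambda|^N + |u_\lambda|^N\right) dx \longrightarrow 0 \quad \text{as } |x_0|\to\infty,$$
so $|u_\lambda(x_0)| \leq \|u_\lambda\|_{L^\infty(B_{1/2}(x_0))} \to 0$.

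To produce the local estimate I would pass to the translation $v(y) := u_\lambda(x_0+y)$, which solves the same PDE (with $a$ translated) in $B_1(0)$, and run a Moser iteration adapted to the $N$-Laplacian. For $\beta \geq 0$ and a cutoff $\eta \in C_c^\infty(B_1)$ with $\eta = 1$ on $B_{1/2}$, I would test the weak form against $\min\{v^{\beta+1}, M\}\eta^N$ (truncating and eventually letting $M \to \infty$). The gradient contribution controls $\|v^{(\beta+N)/N}\eta\|_{W^{1,N}}$. The subcritical term coming from $\lambda a(x)|v|^{q-2}v$ is benign because $a \in L^\infty$ and $q < N$. The serious piece is the critical one, which by \eqref{growth} is dominated by $a_1 v^{p-1}\phi_N(\alpha v^{N/(N-1)})$, and I would split it by Hölder with $s>1$ close to $1$:
$$\int_{B_1} v^{p-1} \phi_N\!\left(\alpha v^{N/(N-1)}\right) v^{\beta+1}\eta^N \, dy \leq \left(\int_{B_1} \phi_N\!\left(s'\alpha v^{N/(N-1)}\right) dy\right)^{1/s'}\!\!\left(\int_{B_1} \left(v^{p+\beta}\eta^N\right)^{s}\right)^{1/s}\!.$$
By the Moser--Trudinger inequality \eqref{TM-N}, the first factor is bounded once $s'\alpha \|v\|_{W^{1,N}(B_1)}^{N/(N-1)} < \alpha_N$, which is automatic for $|x_0|$ large enough because $\|v\|_{W^{1,N}(B_1)}$ is then as small as one wishes. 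The second factor is controlled by the Sobolev embedding $W^{1,N}(B_1) \hookrightarrow L^r(B_1)$ for every $r \in [1,\infty)$. Iterating in $\beta$ along a geometric sequence yields
$$\|v\|_{L^\infty(B_{1/2})} \leq C \|v\|_{W^{1,N}(B_1)}^{\Theta}$$
for some $\Theta > 0$ and a constant $C$ depending only on $N,p,q,\alpha,a_1,\|a\|_\infty$ and a fixed upper bound for $\lambda$.

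Combining the two steps proves the theorem. The main obstacle is tracking constants through the iteration so that the resulting estimate is genuinely of the contractive form $\Phi(t) \to 0$ as $t \to 0$, rather than merely a local boundedness statement. Concretely, one must choose the Hölder exponent $s$ in the critical splitting as a function of the smallness of $\|v\|_{W^{1,N}(B_1)}$ so that the Moser--Trudinger factor stays uniformly bounded (keeping the exponential sub-threshold), and then verify that the geometric blow-up of the test-function exponent $p+\beta$ is absorbed by the Sobolev gain at each step. All other ingredients---translation invariance of the equation, control of the subcritical perturbation, and absolute continuity of the $W^{1,N}$-norm at infinity---are routine.
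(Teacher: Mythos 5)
Your proposal is correct and follows essentially the same route as the paper: a Moser-type iteration on balls yielding a local sup-estimate of the form $\|u_\lambda\|_{L^\infty(B(x_0,R))}\le C\,\|u_\lambda\|^{\Theta}$ with $\Theta>0$ in terms of a local norm over $B(x_0,2R)$, followed by the observation that this local norm vanishes as $|x_0|\to\infty$ because $u_\lambda\in W^{1,N}(\mathbb{R}^N)$. The obstacle you flag---making the iteration close with a genuine positive power rather than a mere boundedness statement---is exactly the crux of the paper's argument, which resolves it by first establishing the global bound $M_0=\|u_\lambda\|_{L^\infty(\mathbb{R}^N)}$ and using it (together with the global smallness $\|u_\lambda\|_{W^{1,N}(\mathbb{R}^N)}\le\tilde\varrho$, rather than local smallness for $|x_0|$ large, to tame the Moser--Trudinger factor) to obtain the homogeneous estimate \eqref{EST1}.
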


\begin{corollary}\label{c1}
	Any	solution $u_{\lambda}$, $\lambda\in (0,\lambda^{*})$, given by Theorem \ref{TP}, satisfies 
	$$\|u_{\lambda}\|_{L^{\infty}(\mathbb{R}^N)}\rightarrow 0,$$
	as $\lambda \rightarrow 0^+$.
\end{corollary}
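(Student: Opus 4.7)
The plan is to derive the corollary as a one-step consequence of two facts: the convergence $\|u_{\lambda}\|_{W^{1,N}(\mathbb{R}^N)}\to 0$ as $\lambda\to 0^+$ already asserted in Theorem \ref{TP}, and the uniform $L^\infty$-bound
\[
\|u_{\lambda}\|_{L^\infty(\mathbb{R}^N)} \leq C\,\|u_{\lambda}\|_{W^{1,N}(\mathbb{R}^N)}^{\Theta}, \qquad \Theta>0,
\]
advertised in the abstract (and, I expect, established during the proof of Theorem \ref{T:infinity}). Combining the two, $\|u_\lambda\|_{L^\infty(\mathbb{R}^N)} \leq C \|u_\lambda\|_{W^{1,N}(\mathbb{R}^N)}^\Theta \to 0$ as $\lambda\to 0^+$. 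Thus the essential work lies in establishing the displayed inequality; once that is at hand, the corollary is immediate.

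To obtain the inequality, I would run a Moser iteration on the weak formulation of \eqref{P}. With truncated test functions of the form $u_\lambda \min\{u_\lambda^{N(\kappa-1)}, M\}$ and $\kappa$ iterated upward, one bounds $\|u_\lambda\|_{L^{\kappa^{k}q}}$ in terms of $\|u_\lambda\|_{L^q}$ for a fixed $q\geq N$ through a geometric product of constants. The source $\lambda a(x)|u|^{q-2}u+f(u)$ is handled as follows: the subcritical piece is controlled through $a\in L^{N/(N-q)}\cap L^{\infty}$; the critical piece $|u|^{p-1}\phi_N(\alpha|u|^{N'})$ is split into a polynomial factor $|u|^{p-1}$ and an exponential factor whose $L^{r}$-norm, for any $r>1$, is estimated via \eqref{TM-N} applied to $v_\lambda:=u_\lambda/\|u_\lambda\|_{W^{1,N}}$, provided $\alpha\,\|u_\lambda\|_{W^{1,N}}^{N'}<\alpha_N$. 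Since $\|u_\lambda\|_{W^{1,N}}\to 0$, this subcriticality condition holds with a fixed margin once $\lambda$ is small, so all Moser constants may be taken uniform in $\lambda$ on a neighborhood of $0$.

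The main obstacle will be producing a \emph{positive} exponent $\Theta$ on the right-hand side, rather than merely a bounded constant. This is achieved by tracking the dependence on the starting $L^q$-norm: if $\|u_\lambda\|_{L^q}$ enters with a positive power, the convergent geometric series controlling the iteration preserves that power and yields $\|u_\lambda\|_{L^\infty}\leq C\|u_\lambda\|_{L^{q}}^{\tau}$ for some $\tau>0$. Combined with the Sobolev embedding $\|u_\lambda\|_{L^q}\leq C\|u_\lambda\|_{W^{1,N}}$ (valid for all $q\geq N$), this yields the claimed bound with $\Theta=\tau$. With the estimate in place, Corollary \ref{c1} follows by letting $\lambda\to 0^+$ and invoking the second conclusion of Theorem \ref{TP}.
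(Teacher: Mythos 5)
Your proposal is correct and follows essentially the same route as the paper: the paper's proof of Theorem \ref{T:infinity} already produces, via a localized Moser iteration with Trudinger--Moser control of the exponential factor, the estimate \eqref{EST1}, which is exactly your inequality $\|u_{\lambda}\|_{L^{\infty}}\le C\|u_{\lambda}\|_{W^{1,N}}^{\Theta}$ with $\Theta=\beta_0/((N+\beta_0)Q_\infty)>0$ after applying the Sobolev embedding $W^{1,N}(\mathbb{R}^N)\hookrightarrow L^{\overline{p}^*}(\mathbb{R}^N)$. The paper then concludes precisely as you do, by noting the constants are uniform in $\lambda$ (thanks to the uniform bound $\|u_{\lambda}\|_{W^{1,N}}\le\tilde{\varrho}$) and invoking \eqref{m1}.
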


Notice that in this paper we don not impose any extra hypotheses on $f$ beyond \eqref{growth}. To compare our condition on the nonlinearity of problem \eqref{P} with we have found in the literature, we present two examples that follow.

\begin{example}
As the first example of a function satisfying \eqref{growth}, we have
\begin{equation}\label{Exam}
    f(t):=h(t)\phi_N(\alpha|t|^{\frac{N}{N-1}})
\end{equation}
where
\[
h(t)=|t|^{p-2}t\sin^2{(t)}.
\]
Now, define
\begin{equation}\label{Exam1}
    g(x,t):=\lambda a(x)t^{q-1}+h(t)\phi_N(\alpha|t|^{\frac{N}{N-1}}),
\end{equation}
where  $a\in L^{\frac{N}{N-q}}(\mathbb{R}^N) \cap L^{\infty}(\mathbb{R}^N)$ is a positive function. Notice that \eqref{Exam1}  satisfies neither \eqref{subcri} nor Ambrosetti-Rabinowitz condition nor \eqref{pseudoAR}. First, let us verify that $g$ does not satisfy \eqref{subcri}. Let $c>0$ be a positive fixed constant. Taking the sequence $t_k=2k\pi$, with $k$ a positive integer,  $q<N$ and $a \in L^{\infty}(\mathbb{R}^N)$, there holds $g(x,t_k)=\lambda a(x)t_k^{q-1}<ct_k^{p-1}$, for $k$ large enough. 

Now, let us define
\[ \begin{array}{lll}
H(x,t)&=&t[\lambda a(x)t^{q-1}+|t|^{p-2}t\sin^2{(t)}\phi_N(\alpha|t|^{\frac{N}{N-1}})]\\
&&-N\int_0^t(\lambda a(x)s^{q-1}+|s|^{p-2}s\sin^2{(s)}\phi_N(\alpha|s|^{\frac{N}{N-1}}))ds.
\end{array}\]
Still considering the sequence $t_k=2k\pi$, with $k$ a positive integer, $H$ satisfies
\[
H(x,t_k)=\lambda a(x)t_k^{q}\left(1-\frac{N}{q}\right)-N\int_0^{t_k}|s|^{p-2}s\sin^2{(s)}\phi_N(\alpha|s|^{\frac{N}{N-1}})ds<0.
\]
Notice that $H(x,t_k)>H(x,t_{k+1})$. Therefore, \eqref{Exam1}  satisfies neither Ambrosetti-Rabinowitz condition nor \eqref{ma1} nor \eqref{pseudoAR}.

\end{example}

\begin{example}
The second example of a function satisfying \eqref{growth} we would like to mention is
\begin{equation}\label{Exam2}
    f(t):=|t|^{p-2}t[(\sin{(t)})_+]\phi_N(\alpha|t|^{\frac{N}{N-1}}), \, \mbox{ where } z_+=\max\{z,0\}.
\end{equation}
Notice that $f$ is a continuous function but it is not derivable. Indeed, since 
$$\begin{array}{lll}
f'_-(\pi)&=&\lim_{l \to 0^-}\frac{f(\pi+l)}{l}=\lim_{l \to 0^-}\frac{|\pi+l|^{p-2}(\pi+l)\sin{(\pi+l)}\phi_N(\alpha|\pi+l|^{\frac{N}{N-1}})}{l}\\&=&-\lim_{l \to 0^-}|\pi+l|^{p-2}(\pi+l)\frac{\sin{(l)}}{l}\phi_N(\alpha|\pi+l|^{\frac{N}{N-1}})\\
&=&-\pi^{p-1}\phi_N(\alpha\pi^{\frac{N}{N-1}})<0,\end{array}$$
and 
$$f'_+(\pi)=\lim_{l \to 0^+}\frac{f(\pi+l)}{l}=0,
$$
so $f'(\pi)$ does not exists. In particular, $f$ does not belong to  $C^1(\mathbb{R},\mathbb{R})$, as it imposed  in \cite{Freitas1} to obtain the existence result. Furthermore, if $N\geq 2$, \eqref{Exam2}  satisfies neither Ambrosetti-Rabinowitz condition nor \eqref{pseudoAR}. Indeed, let us define
\[ \begin{array}{lll}
\tilde{H}(t)&=&t(|t|^{p-2}t[(\sin{(t)})_+]\phi_N(\alpha|t|^{\frac{N}{N-1}}))\\
&&-N\int_0^t|s|^{p-2}s[(\sin{(s)})_+]\phi_N(\alpha|s|^{\frac{N}{N-1}})ds.
\end{array}\]
Considering the sequence $t_k=2k\pi$, with $k$  being a positive integer, $\tilde{H}$ satisfies
\[
\tilde{H}(t_k)=-N\int_0^{t_k}|s|^{p-2}s[(\sin{(s)})_+]\phi_N(\alpha|s|^{\frac{N}{N-1}})ds<0.
\]
Notice that $\tilde{H}(t_k)>\tilde{H}(t_{k+1})$. Therefore, \eqref{Exam2} does not satisfies neither Ambrosetti-Rabinowitz condition  nor \eqref{ma1} nor \eqref{pseudoAR}.
\end{example}

By observing these simple examples above, we can see that our results are not included in the previous literature. Therefore, it brings novelty in the study of such equations in the field.

The solution of \eqref{P} is obtained as the limit of auxiliarily problems in bounded domains. The technique combines the Galerkin method, comparison principle of lower and upper solutions, and regularity scheme. 
As in  \cite{AL}, we also consider a special class of normed spaces of finite dimension. However, to clarify this approach we show Lemma \ref{prop1}, which is a result of independent interest.  
Such a lemma, together Brouwer's fixed point theorem, allows us, in the scheme of the Galerkin method, to work in general Banach spaces of finite dimensions with general norms.

 Due to the presence of the critical term $\phi_N(\alpha|u|^{\frac{N}{N-1}})$ and since we are in the whole space $\mathbb{R}^N$, we had to overcome several difficulties. One of them is a suitable improvement needed in the estimates of the approximating functions of $f$ by comparing it with \cite[Lemma
2.2.]{AL} (see Lemma \ref{lemma2} bellow). 
Another delicate point in our approach is the regularity needed to ensure that the solution, obtained in the limit, does not vanish. 
In the bounded domain, we obtain regularity up to the boundary for the auxiliary problems considering approximate functions in the sense of Strauss \cite{s}. For the domain-wide solution case, a kind of a priori estimate in the sup norm is presented, see Proposition \ref{prop:bdd-2} (and the consequences of that outcome).

Now we proceed to introduce the organization of the rest of the paper. Section \ref{Prel} presents comparison principles, some preliminaries results that are useful through the text and an important result labeled as Lemma \ref{prop1}. Section \ref{s.2} is concerning approximating functions that enable us to obtain regularity up to the boundary to the approximating solutions. In Section
\ref{S3}, we study a class of auxiliary problems in bounded domains.
Section \ref{S10} is devoted to the proof of our main results.


\section{Preliminaries}\label{Prel}
In this section, we state some preliminaries results which will be necessary throughout the paper.

Let $u\in W^{1,N}_0(D)$. In what follows, let us denote by $\tilde{u}$ the canonical  extension of $u$ by $0$ outside $D$, that is, 
	
	\begin{equation}\label{utilde}
\tilde{u}(x)= \left\{ 	    \begin{array}{ll}
u(x)& \mbox{ if }x\in D,\\
0& \mbox{ if }x\in \mathbb{R}^N\setminus D.
	    \end{array} \right.
	\end{equation}
	It is well known that  $u\in W_0^{1,N}(D)$ implies $\tilde{u}\in W^{1,N}(\mathbb{R}^N)$ (see e.g. \cite[Proposition 9.18]{Brezis}).

The next technical lemma was proved in \cite[Lemma 2.3]{AF}.
\begin{lemma}\label{lemf}
    Let $\alpha>0$ and $r>1$. Then, for every $\beta>r$, there exists a constant $C:=C(\beta)$ such that
    $$\left(\phi_N(\alpha |u|^{\frac{N}{N-1}})\right)^{r}\leq C \phi_N(\beta\alpha |u|^{\frac{N}{N-1}}),$$
    where $\phi_N$ is given in \eqref{phi}.
\end{lemma}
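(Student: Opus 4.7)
The plan is to fix $\alpha>0$, $r>1$, $\beta>r$, and reduce the claim to a purely one-variable statement by letting $s:=\alpha|u|^{N/(N-1)}\geq 0$; then it suffices to prove that
\[
\phi_N(s)^{r}\leq C\,\phi_N(\beta s)\qquad\text{for every }s\geq 0,
\]
with $C$ depending only on $\beta$ (and $N,r$). Since both sides vanish at $s=0$, the real work is to control the continuous function $g(s):=\phi_N(s)^{r}/\phi_N(\beta s)$ on $(0,\infty)$ and show it is bounded; the constant $C(\beta)$ can then be taken as $\sup_{s>0}g(s)$.

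First I would handle the behavior at the origin using the series definition $\phi_N(t)=\sum_{j\geq N-1}t^{j}/j!$. From this expansion,
\[
\phi_N(s)=\frac{s^{N-1}}{(N-1)!}\bigl(1+O(s)\bigr),\qquad \phi_N(\beta s)=\frac{(\beta s)^{N-1}}{(N-1)!}\bigl(1+O(s)\bigr),
\]
so $g(s)\sim s^{(r-1)(N-1)}/\bigl(\beta^{N-1}((N-1)!)^{r-1}\bigr)\to 0$ as $s\to 0^{+}$, because $r>1$ and $N\geq 2$. In particular $g$ extends continuously to $0$ with value $0$.

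Next I would analyse the behavior at infinity. Using $\phi_N(t)\leq e^{t}$ one gets $\phi_N(s)^{r}\leq e^{rs}$. For the denominator, writing $\phi_N(\beta s)=e^{\beta s}\bigl(1-e^{-\beta s}\sum_{j=0}^{N-2}(\beta s)^{j}/j!\bigr)$, the bracketed factor tends to $1$ as $s\to\infty$. Therefore
\[
g(s)\leq \frac{e^{rs}}{\phi_N(\beta s)}\sim e^{(r-\beta)s}\longrightarrow 0,
\]
since $\beta>r$. Combining continuity on $(0,\infty)$ with the two one-sided limits at $0$ and $+\infty$ shows that $g$ attains a finite maximum $C=C(\beta)$ on $[0,\infty)$, which yields the inequality.

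The only delicate point is making sure the constant depends only on $\beta$ (and the fixed parameters $\alpha,r,N$), not on $u$; this is automatic once we have reduced to the one-variable function $g$, because the reduction $s=\alpha|u|^{N/(N-1)}$ is monotone and covers all of $[0,\infty)$. No further structure of $\phi_N$ beyond its defining series and the elementary bound $\phi_N(t)\leq e^{t}$ is needed, so the argument is essentially calculus on $\mathbb{R}_{+}$.
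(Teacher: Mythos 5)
Your proof is correct. The paper does not prove this lemma itself but cites \cite[Lemma 2.3]{AF}, and your argument is exactly the standard one for that result: reduce to the one-variable ratio $g(s)=\phi_N(s)^r/\phi_N(\beta s)$, use the leading term $s^{N-1}/(N-1)!$ of the series to get $g(s)\to 0$ as $s\to 0^+$ (here $r>1$ is essential), and use $\phi_N(s)\le e^s$ together with $\phi_N(\beta s)\sim e^{\beta s}$ to get $g(s)\to 0$ as $s\to\infty$ (here $\beta>r$ is essential), so that continuity gives a finite supremum. Your parenthetical remark that the constant really depends on $N$ and $r$ as well as $\beta$ is accurate and consistent with how the lemma is used in the paper.
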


A complete proof of the next result can be found in \cite[Lemma 3]{FMT}.
\begin{lemma}\label{le2}
	 Let $1<q<N$.
For any constants $b>0$, the problem
    \begin{equation}\label{19}
    \left\{
    \begin{array}{lll}
       -\Delta_N u+|u|^{N-2}u=b|u|^{q-2}u &\mbox{in}&D,\\
        u>0&\mbox{in}&  D,\\
        u=0 & \mbox{on} & \partial D,
    \end{array} \right.
\end{equation}
admits a solution $u_b\in C^1_0(\overline{D})$ satisfying
$\partial u_b/\partial \nu<0$ on $\partial D$.
\end{lemma}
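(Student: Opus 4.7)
The plan is to obtain $u_b$ as a global minimizer of the associated energy functional
\begin{equation*}
J(u)=\frac{1}{N}\int_{D}\bigl(|\nabla u|^{N}+|u|^{N}\bigr)\,dx-\frac{b}{q}\int_{D}|u|^{q}\,dx,\qquad u\in W_{0}^{1,N}(D),
\end{equation*}
and then to upgrade the minimizer, in turn, to a positive, $C^{1}$ solution with negative outward normal derivative on $\partial D$. First I would observe that because $1<q<N$, H\"older's inequality and the Sobolev embedding $W_{0}^{1,N}(D)\hookrightarrow L^{q}(D)$ give $\int_{D}|u|^{q}\leq C\|u\|_{W^{1,N}(D)}^{q}$, so $J(u)\geq \tfrac{1}{N}\|u\|_{W^{1,N}(D)}^{N}-C'\|u\|_{W^{1,N}(D)}^{q}$. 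Since $q<N$, $J$ is coercive; it is also sequentially weakly lower semicontinuous (the leading term by convexity, the subcritical term by Rellich compactness). Hence $J$ attains its infimum at some $u_{b}\in W_{0}^{1,N}(D)$.

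Next I would show the minimizer is nontrivial and can be taken nonnegative. Choosing any $\varphi\in C_{c}^{\infty}(D)$ with $\varphi\geq 0$, $\varphi\not\equiv 0$, one has $J(t\varphi)=O(t^{N})-c\,t^{q}<0$ for all sufficiently small $t>0$, so $\inf J<0$ and thus $u_{b}\not\equiv 0$. Replacing $u_{b}$ by $|u_{b}|$ does not change the value of $J$ (both $|\nabla |u_{b}||=|\nabla u_{b}|$ a.e.\ and $||u_{b}||=|u_{b}|$), so we may assume $u_{b}\geq 0$ a.e.; being a critical point, it satisfies the Euler–Lagrange equation in \eqref{19} weakly.

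Now I would promote $u_{b}$ to a classical positive solution. Standard results of Serrin–Trudinger type give $u_{b}\in L^{\infty}(D)$, and then the interior $C^{1,\alpha}$ regularity of Tolksdorf together with Lieberman's boundary regularity for quasilinear operators of $p$-Laplace type yield $u_{b}\in C^{1,\alpha}(\overline{D})$, so in particular $u_{b}\in C_{0}^{1}(\overline{D})$. V\'azquez's strong maximum principle applied to the equation, rewritten as $-\Delta_{N}u_{b}+c(x)|u_{b}|^{N-2}u_{b}=0$ with $c(x)=1-b\,u_{b}^{q-N}\mathbf{1}_{\{u_{b}>0\}}$ bounded near the zero set, forces $u_{b}>0$ throughout $D$. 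Finally V\'azquez's boundary point lemma (Hopf lemma for the $N$-Laplacian) delivers $\partial u_{b}/\partial\nu<0$ on $\partial D$.

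The main obstacle I foresee is not the variational existence step, which is routine because $q<N$, but the regularity and the boundary Hopf step: one has to know a priori that $u_{b}$ is bounded in order to invoke Tolksdorf/Lieberman, and then to apply V\'azquez's boundary lemma one needs the right sign structure of the zero-order term, which is why the identification $c(x)=1-bu_{b}^{q-N}$ must be handled carefully near $\partial D$ where $u_{b}\to 0$ and $u_{b}^{q-N}$ blows up. The cleanest way around this is to note that on any neighborhood of $\partial D$ the right-hand side $b u_{b}^{q-1}$ is nonnegative, so $-\Delta_{N}u_{b}+u_{b}^{N-1}\geq 0$ there, which is exactly the hypothesis needed for V\'azquez's Hopf lemma and yields the strict sign of the normal derivative.
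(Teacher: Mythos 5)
Your argument is correct, and it is essentially the proof the paper relies on: the paper does not prove Lemma \ref{le2} itself but defers to \cite[Lemma 3]{FMT}, where the same standard route is taken — direct minimization of the coercive (since $q<N$) energy functional, negativity of the infimum along $t\varphi$ for small $t>0$, $L^\infty$ and then $C^{1,\alpha}(\overline{D})$ regularity, and V\'azquez's strong maximum principle and boundary point lemma. Your closing observation is the right way to handle the Hopf step, and in fact it works in all of $D$, not just near $\partial D$: from $-\Delta_N u_b+u_b^{N-1}=b\,u_b^{q-1}\geq 0$ one applies V\'azquez with $\beta(s)=s^{N-1}$, for which $\int_0^1(s\beta(s))^{-1/N}\,ds=\infty$, so the detour through $c(x)=1-b\,u_b^{q-N}$ is unnecessary.
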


We conclude this section by presenting a lemma, which is a consequence of Brouwer’s Fixed Point Theorem. However, our statement is a subtle (but very useful) generalization by comparing it with the literature. In particular, this result allows us to work in general Banach spaces, with freedom in choosing the norm  (see the proof of Lemma \ref{lem:appro-sol}). We will adopt $|x|_2=\sqrt{\left\langle x,x\right\rangle}$ to denote the usual euclidean norm in $\mathbb{R}^d$ and $\|x\|_d$ to denote a general norm in $\mathbb{R}^d$. The proof of next lemma follows some ideas like in Kesavan \cite{k}, where the result for the particular case $\|x\|_d:=|x|_2$  is presented.

	\begin{lemma}\label{prop1}
		Let $F: (\mathbb{R}^d, \|\cdot\|_d) \rightarrow (\mathbb{R}^d, \|\cdot\|_d)$ be a continuous function such that $\left\langle F(\xi),\xi\right\rangle\geq 0$ for every $\xi \in \mathbb{R}^d$ with $\|\xi\|_{d}=\varrho$ for some $\varrho>0$,  and $\langle \cdot, \cdot \rangle ^{1/2}=|\cdot|_2$. Then, there exists $z_0$ in the closed ball $\overline{B}^d_\varrho(0):=\{z \in \mathbb{R}^d; \|z\|_{d}\leq \varrho\}$ such that $F(z_0)=0$.
	\end{lemma}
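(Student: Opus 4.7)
The plan is to argue by contradiction via Brouwer's fixed point theorem applied to a suitable retraction-type map on the closed $\|\cdot\|_d$-ball. The underlying idea is standard (the Euclidean case is Kesavan's version); the only real twist is that the ball and its boundary are defined through a non-Euclidean norm $\|\cdot\|_d$, whereas the sign condition is phrased through the Euclidean inner product $\langle\cdot,\cdot\rangle$. The proof therefore has to carefully separate these two roles.

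First, assume for contradiction that $F(z)\neq 0$ for every $z\in\overline{B}^d_\varrho(0)$, and introduce the map
\[
G:\overline{B}^d_\varrho(0)\to \mathbb{R}^d,\qquad G(z)=-\varrho\,\frac{F(z)}{\|F(z)\|_d}.
\]
Then $G$ is continuous on $\overline{B}^d_\varrho(0)$ (this is where the non-vanishing assumption is used), and $\|G(z)\|_d=\varrho$ for every $z$; hence $G$ maps $\overline{B}^d_\varrho(0)$ into itself (indeed, into its boundary). Since all norms on $\mathbb{R}^d$ are equivalent, $\overline{B}^d_\varrho(0)$ is compact, convex, and homeomorphic to a Euclidean closed ball, so Brouwer's fixed point theorem applies and yields $z_0\in\overline{B}^d_\varrho(0)$ with $G(z_0)=z_0$. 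Necessarily $\|z_0\|_d=\varrho$, so in particular $z_0\neq 0$ and $|z_0|_2>0$. Taking the Euclidean inner product of $z_0=-\varrho F(z_0)/\|F(z_0)\|_d$ with $z_0$ gives
\[
|z_0|_2^2 \;=\; -\,\frac{\varrho}{\|F(z_0)\|_d}\,\langle F(z_0),z_0\rangle,
\]
so $\langle F(z_0),z_0\rangle<0$. This contradicts the hypothesis $\langle F(\xi),\xi\rangle\ge 0$ on $\{\|\xi\|_d=\varrho\}$, completing the proof.

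The main (and essentially only) obstacle is conceptual rather than technical: one must normalize by $\|\cdot\|_d$ so that $G$ lands inside the correct ball for Brouwer's theorem, but then test the fixed-point identity against $z_0$ using the Euclidean inner product in order to invoke the hypothesis. Once this separation of roles is recognized, the argument is a routine variant of the Euclidean case treated in Kesavan \cite{k}, and no extra structural assumption on $\|\cdot\|_d$ (beyond its being a norm on $\mathbb{R}^d$) is required.
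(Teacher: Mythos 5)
Your proof is correct and follows essentially the same route as the paper: contradiction, the map $G(z)=-\varrho F(z)/\|F(z)\|_d$, Brouwer's fixed point theorem, and testing the fixed-point identity against $z_0$ with the Euclidean inner product. The only (cosmetic) difference is that you conclude $|z_0|_2^2>0$ directly from $z_0\neq 0$, whereas the paper routes this through the norm-equivalence inequality $\|x\|_d\leq c(d)|x|_2$; your shortcut is perfectly valid.
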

	\begin{proof}
	Firstly, there exists $c(d)>0$ such that
	\begin{equation}\label{equiv.}
	    \|x\|_d\leq c(d)|x|_2, \,\,\ \forall x \in \mathbb{R}^d.
	\end{equation}
	Suppose, $F(x)\neq0$ for all $x \in \overline{B}^d_\varrho(0)$. Define
	\[
	g: (\mathbb{R}^d, \|\cdot\|_d) \rightarrow (\mathbb{R}^d, \|\cdot\|_d)
	\]
	by
	\[
	g(x)=-\frac{\varrho}{\|F(x)\|_d}F(x)
	\]
	which maps $\overline{B}^d_\varrho(0)$ into itself and is continuous. Hence it has a fixed point $x_0$, by Brouwer's Fixed Point Theorem. Since $x_0=g(x_0)$, we have $\|x_0\|_d=\|g(x_0)\|_d=\varrho>0$. But then by \eqref{equiv.}
	\[
	0<\varrho^2=\|x_0\|^2_d\leq c(d)^2|x_0|^2_2=c(d)^2\left\langle x_0,x_0\right\rangle=c(d)^2\left\langle x_0,g(x_0)\right\rangle
	\]
	\[
	=-c(d)^2\frac{\varrho}{\|F(x_0)\|_d}\left\langle x_0,F(x_0)\right\rangle\leq 0,
	\]
	by assumptions, which is a contradiction.
	\end{proof}

\subsection{Comparison principle}\label{S2}
In this ection, we assume that $D$ is a bounded domain in
$\mathbb{R}^N$ with $C^2$ boundary $\partial D$. In the following, we present a couple of comparison principles for a subsolution
and a supersolution of the problem
\begin{equation}\label{1z}
\left\{
\begin{array}{ll}
-\Delta_N u+|u|^{N-2}u=g(u)& \mbox{ in } D,\\
u=0& \mbox{ on } \partial D,
\end{array}
\right.
\end{equation}
where
$g:\mathbb{R}\rightarrow\mathbb{R}$ is a continuous function.

We say that $u_1\in W^{1,N}({D})$ is a subsolution of problem
\eqref{1z} if $u_1\leq 0$ on $\partial D$ and
\begin{eqnarray*}
\int_{D}(|\nabla u_1|^{N-2}\nabla u_1\nabla\varphi+ 
|u_1|^{N-2}u_1\varphi )dx 
\leq\int_{D} g(u_1)\varphi dx
\end{eqnarray*}
for all $\varphi\in W^{1,N}_0({D})$ with $\varphi\geq 0$ in ${D}$
provided the integral $\int_{D} g(u_1)\varphi dx$ exists. We say
that $u_2\in W^{1,N}({D})$ is a supersolution of \eqref{1z} if the
reversed inequalities are satisfied with $u_2$ in place of $u_1$ for
all $\varphi\in W^{1,N}_0({D})$ with $\varphi\geq 0$ in ${D}$.

The next comparison results are particular cases of the ones achieved in \cite[Theorem 3, Theorem 5]{FMT}. 

\begin{proposition}\label{teorsubsup}
Let $g:\mathbb{R}\rightarrow\mathbb{R}$ be a continuous function
such that $g(t)/t^{N-1}$ is decreasing for $t>0$. Assume that $u_1$
and $u_2$ are a positive subsolution and a positive supersolution of
problem \eqref{1z}, respectively. If $u_2(x)>u_1(x)=0$ for all
$x\in\partial D$, $u_i\in C^{1,\alpha}(\overline{D})$ with some
$\alpha\in(0,1)$, $\Delta_N  u_i\in L^{\infty}(D)$, for $i,j=1,2$,
then $u_2\geq u_1$ in $D$.
\end{proposition}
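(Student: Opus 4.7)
The plan is a Díaz--Saa / Picone-type argument applied to the difference of the weak sub- and supersolution inequalities. First I would set
\[
\Omega_{+}:=\{x\in D : u_1(x)>u_2(x)\}
\]
and argue by contradiction that $\Omega_{+}\neq\emptyset$. Because $u_2>u_1=0$ on $\partial D$ and $u_1,u_2\in C^{1,\alpha}(\overline{D})$, the strict inequality $u_2>u_1$ persists in an open neighbourhood of $\partial D$; hence $\overline{\Omega_{+}}$ is compactly contained in $D$, and $u_1=u_2$ on $\partial\Omega_{+}$.

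As test functions I would introduce the classical Díaz--Saa choices
\[
\varphi_1=\frac{(u_1^{N}-u_2^{N})^{+}}{u_1^{N-1}},\qquad \varphi_2=\frac{(u_1^{N}-u_2^{N})^{+}}{u_2^{N-1}}.
\]
Both are nonnegative and supported in $\overline{\Omega_{+}}$, which is a compact subset of $D$ on which $u_1,u_2$ are strictly positive and $C^{1,\alpha}$; in particular $\varphi_1,\varphi_2\in W^{1,N}_{0}(D)$ are admissible. Plugging $\varphi_1$ into the weak subsolution inequality for $u_1$ and $\varphi_2$ into the weak supersolution inequality for $u_2$, and subtracting, the absorption terms telescope since
\[
u_1^{N-1}\varphi_1=(u_1^{N}-u_2^{N})^{+}=u_2^{N-1}\varphi_2,
\]
so they cancel exactly, and one is left with
\[
\int_{\Omega_{+}}\!\Bigl(|\nabla u_1|^{N-2}\nabla u_1\!\cdot\!\nabla\varphi_1-|\nabla u_2|^{N-2}\nabla u_2\!\cdot\!\nabla\varphi_2\Bigr)dx \;\le\; \int_{\Omega_{+}}\!\Bigl(\tfrac{g(u_1)}{u_1^{N-1}}-\tfrac{g(u_2)}{u_2^{N-1}}\Bigr)(u_1^{N}-u_2^{N})\,dx.
\]

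The right-hand side is nonpositive because $u_1>u_2>0$ on $\Omega_{+}$ and $t\mapsto g(t)/t^{N-1}$ is decreasing, while the left-hand side is nonnegative by the pointwise Díaz--Saa (Picone-type) inequality for the $N$-Laplacian, with equality only where $u_1/u_2$ is locally constant. Consequently both sides vanish, so $u_1\equiv c\,u_2$ on each connected component of $\Omega_{+}$; continuity together with $u_1=u_2$ on $\partial\Omega_{+}$ then forces $c=1$, contradicting the definition of $\Omega_{+}$. Therefore $\Omega_{+}=\emptyset$ and $u_2\ge u_1$ in $D$.

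The main obstacle is precisely the pointwise Díaz--Saa inequality: expanding $\nabla\varphi_1$ and $\nabla\varphi_2$ in terms of $\nabla u_1,\nabla u_2$ and writing the integrand as a nonnegative algebraic expression requires the strict convexity of $\xi\mapsto|\xi|^{N}$ and a careful Picone-type identity for the $N$-Laplacian. The regularity assumptions $u_i\in C^{1,\alpha}(\overline{D})$ and $\Delta_N u_i\in L^{\infty}(D)$ are used to make all gradient manipulations rigorous, to justify the admissibility of the quotient test functions, and to guarantee that $\overline{\Omega_{+}}\Subset D$ so that no boundary term contributes.
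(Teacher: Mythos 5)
Your argument is correct and is essentially the proof behind this proposition: the paper itself only cites \cite[Theorems 3 and 5]{FMT}, and the technique there is exactly this D\'iaz--Saa/Picone scheme with the test functions $(u_1^N-u_2^N)^+/u_i^{N-1}$, the exact cancellation of the absorption terms $u_1^{N-1}\varphi_1=u_2^{N-1}\varphi_2$, and the monotonicity of $g(t)/t^{N-1}$ against the nonnegativity of the Picone expression. The one step worth flagging is the localization (using $u_2>u_1=0$ on $\partial D$ to get $\overline{\Omega_+}\Subset D$, hence $u_1,u_2$ bounded away from zero on the support of the quotient test functions), which is precisely where the boundary hypothesis enters, and you have handled it correctly.
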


Whenever $u_1$ and $u_2$ satisfy the homogeneous Dirichlet
boundary condition we can state the following result.

\begin{proposition}
    \label{com:Dirichlet}
Let $g:\mathbb{R}\rightarrow\mathbb{R}$ be a continuous function
such that $g(t)/t^{N-1}$ is decreasing for $t>0$. Assume that $u_1,u_2\in C_0^{1,\alpha}(\overline{D})$ with some
$\alpha\in(0,1)$, are a positive subsolution and a positive supersolution of
problem \eqref{1z}, respectively. If  $\Delta_N  u_i\in L^{\infty}(D)$, for $i,j=1,2$,  $u_1/u_2\in L^\infty(D)$ and $u_2/u_1\in L^\infty(D)$,
then  $u_2\geq u_1$ in
${D}$.
\end{proposition}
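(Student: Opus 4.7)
The plan is a standard Díaz--Saa argument, adapted to the $N$-Laplacian and tailored to the fact that $u_1,u_2$ share the homogeneous Dirichlet datum. Set $\Omega^+=\{x\in D:u_1(x)>u_2(x)\}$ and suppose, by contradiction, that $|\Omega^+|>0$. The aim is to convert the two weak inequalities defining subsolution/supersolution into a single integral inequality whose two sides have opposite signs, forcing $|\Omega^+|=0$.

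The key is to choose the test functions
\[
\varphi_1=\frac{(u_1^{N}-u_2^{N})_+}{u_1^{N-1}},\qquad \varphi_2=\frac{(u_1^{N}-u_2^{N})_+}{u_2^{N-1}}.
\]
Both are nonnegative, vanish on $\partial D$, and belong to $W^{1,N}_0(D)$: the regularity $u_i\in C_0^{1,\alpha}(\overline D)$ together with $u_1/u_2,\,u_2/u_1\in L^\infty(D)$ ensures that $\varphi_1,\varphi_2$ are bounded and their gradients are in $L^N(D)$, even near $\partial D$ where both $u_i$ vanish simultaneously. Plug $\varphi_1$ into the subsolution inequality for $u_1$ and $\varphi_2$ into the supersolution inequality for $u_2$. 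The crucial cancellation is that the zero-order contributions collapse to the same integral, because
\[
\int_D u_i^{N-1}\varphi_i\,dx=\int_{\Omega^+}(u_1^{N}-u_2^{N})\,dx,\qquad i=1,2.
\]
Subtracting and reorganizing then yields
\[
\int_{\Omega^+}\!\!\Bigl[|\nabla u_1|^{N-2}\nabla u_1\!\cdot\!\nabla\varphi_1-|\nabla u_2|^{N-2}\nabla u_2\!\cdot\!\nabla\varphi_2\Bigr]dx\le \int_{\Omega^+}\!\!\Bigl[\tfrac{g(u_1)}{u_1^{N-1}}-\tfrac{g(u_2)}{u_2^{N-1}}\Bigr](u_1^{N}-u_2^{N})\,dx.
\]

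The sign analysis is now immediate. The left-hand side is nonnegative by the Picone-type identity for the $N$-Laplacian, which gives the pointwise a.e. inequality
\[
|\nabla u_1|^{N-2}\nabla u_1\cdot\nabla\!\Bigl(\tfrac{u_1^{N}-u_2^{N}}{u_1^{N-1}}\Bigr)-|\nabla u_2|^{N-2}\nabla u_2\cdot\nabla\!\Bigl(\tfrac{u_1^{N}-u_2^{N}}{u_2^{N-1}}\Bigr)\ge 0.
\]
The right-hand side, on the other hand, is strictly negative on the set $\Omega^+$, because there $u_1>u_2>0$ and $t\mapsto g(t)/t^{N-1}$ is (strictly) decreasing. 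Taken together, these two facts force $|\Omega^+|=0$, hence $u_1\le u_2$ a.e.\ in $D$, and by continuity everywhere.

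The main obstacle I expect is rigorously justifying the test-function step: $\varphi_1,\varphi_2$ formally involve division by functions that vanish at $\partial D$, and the natural fear is loss of $W^{1,N}_0$-regularity or blow-up of $|\nabla\varphi_i|$. This is exactly why the two $L^\infty$-quotient hypotheses, together with $u_i\in C_0^{1,\alpha}(\overline D)$ and $\Delta_N u_i\in L^\infty(D)$, are imposed: they guarantee that $u_1$ and $u_2$ decay at the same rate near $\partial D$ (a Hopf-type control), so the ratios and their gradients stay bounded in the right $L^q$-scales. A standard approximation—replacing $u_i$ by $u_i+\varepsilon$ in the denominators and letting $\varepsilon\to 0^+$ with dominated convergence—makes the computation rigorous and validates both the Picone-type inequality and the cancellation of the lower-order terms.
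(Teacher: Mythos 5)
Your argument is correct; note that the paper itself does not prove this proposition but simply defers to \cite[Theorems 3 and 5]{FMT}, and the proof there is exactly the D\'iaz--Saa/Picone scheme you reconstruct, with the test functions $(u_1^N-u_2^N)_+/u_1^{N-1}$ and $(u_1^N-u_2^N)_+/u_2^{N-1}$, the cancellation of the zero-order terms, and the Allegretto--Huang inequality giving the sign of the gradient terms. Your identification of the role of the hypotheses ($C_0^{1,\alpha}(\overline D)$ plus the two $L^\infty$ quotient bounds make the test functions admissible in $W_0^{1,N}(D)$, and ``decreasing'' is to be read strictly, as in the application $g(t)=\lambda a_D t^{q-1}$ with $q<N$) is also the right one.
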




 
 \section{Approximating functions} \label{s.2}

To prove Theorem \ref{TP}, we approximate $f$ by Lipschitz functions $f_k:\mathbb{R} \to \mathbb{R}$ defined by
\begin{equation}\label{eq1}
f_k(s)=\displaystyle\left\{
	\begin{array}{lcc}
-k[G(-k-\frac{1}{k}) - G(-k)], &\textup{if}& s\leq -k,\\	
-k[G(s-\frac{1}{k}) - G(s)], &\textup{if}& -k\leq s \leq -\frac{1}{k},\\
k^2s[G(-\frac{2}{k}) - G(-\frac{1}{k})], &\textup{if}& -\frac{1}{k}\leq s\leq 0,\\
k^2s[G(\frac{2}{k}) - G(\frac{1}{k})], &\textup{if}& 0\leq s\leq \frac{1}{k},\\	
k[G(s+\frac{1}{k}) - G(s)], &\textup{if}& \frac{1}{k}\leq s \leq k,\\
k[G(k+\frac{1}{k}) - G(k)], &\textup{if}& s\geq k,\\
	\end{array}
		\right.
\end{equation}
where $G(s)=\int_0^sf(\xi)d\xi$.

The following approximation result was proved in \cite{s}.

\begin{lemma}\label{lemma1}
Let $f:\mathbb{R} \to \mathbb{R}$ be a continuous function such that $sf(s)\geq 0$ for every $s \in \mathbb{R}$. Then there exists a sequence $f_k:\mathbb{R} \to \mathbb{R}$ of continuous functions satisfying

(i) $sf_k(s)\geq 0$ for every $s \in \mathbb{R}$;

(ii) $\forall \, k \in \mathbb{N}$ $\exists\, c_k>0$ such that $|f_k(\xi) - f_k(\eta)|\leq c_k|\xi - \eta|$ for every $\xi, \eta \in \mathbb{R}$;
	
(iii) $f_k$ converges uniformly to $f$ in bounded subsets of $\mathbb{R}$.
\end{lemma}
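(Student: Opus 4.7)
The plan is to verify the three listed properties directly from the piecewise formula \eqref{eq1}, using only the sign hypothesis $sf(s)\ge 0$ and the continuity of $f$. A preliminary observation that underpins everything is that, setting $G(s)=\int_0^s f(\xi)\,d\xi$, the sign hypothesis makes $G$ nondecreasing on $[0,\infty)$ and nonincreasing on $(-\infty,0]$, with minimum at $0$; in particular $G(a)\ge G(b)$ whenever $a\ge b\ge 0$ or $0\ge b\ge a$. Moreover, continuity of $f$ together with $sf(s)\ge 0$ forces $f(0)=0$, since otherwise one sign near $0$ would be violated.

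For (i), I would work case by case on the six regions. On $0\le s\le 1/k$ one has $f_k(s)=k^2 s\,[G(2/k)-G(1/k)]\ge 0$ because both $s$ and the difference are nonnegative; on $1/k\le s\le k$ one gets $f_k(s)=k[G(s+1/k)-G(s)]\ge 0$ by monotonicity of $G$ on $[0,\infty)$; and the constant region $s\ge k$ is handled similarly. The three negative regions are symmetric, using that $G$ is nonincreasing on $(-\infty,0]$ so that $G(s-1/k)-G(s)\ge 0$ there. For (ii), I would first check continuity at the four knots $s=\pm k,\pm 1/k$ and at $0$: at each knot the two one-sided expressions in \eqref{eq1} give the same value (the check at $s=1/k$, for instance, yields $k[G(2/k)-G(1/k)]$ from both sides, and similarly for the others). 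Then on each open piece $f_k$ is either constant, affine in $s$, or of the form $\pm k[G(s\pm 1/k)-G(s)]$, which is differentiable with derivative $\pm k[f(s\pm 1/k)-f(s)]$ bounded by $2k\sup_{|t|\le k+1/k}|f(t)|$. A continuous, piecewise-$C^1$ function with uniformly bounded piecewise derivatives is globally Lipschitz, which gives the constant $c_k$.

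For (iii), fix a bounded set, say $[-M,M]$, and take $k>M$. On $[1/k,M]\subset[1/k,k]$ the mean value theorem applied to $G$ gives, for each $s$, a point $\xi_s\in(s,s+1/k)$ with $f_k(s)=f(\xi_s)$, so $|f_k(s)-f(s)|=|f(\xi_s)-f(s)|$ tends to $0$ uniformly by uniform continuity of $f$ on $[0,M+1]$. On $[-1/k,1/k]$ the affine formula gives $|f_k(s)|\le k[G(2/k)-G(1/k)]=|f(\eta_k)|$ for some $\eta_k\in(1/k,2/k)$, and since $\eta_k\to 0$ and $f(0)=0$ by the preliminary observation, this bound tends to $0$; together with $|f(s)|\to 0$ on $[-1/k,1/k]$ this handles the middle strip. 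The region $[-M,-1/k]$ is symmetric. No serious obstacle is expected; the one subtlety worth flagging is precisely that $f(0)=0$, which is what lets the small-interval estimate close and is easy to miss if one jumps directly to the mean value step.
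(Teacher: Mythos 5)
Your verification is correct. The paper itself gives no proof of this lemma --- it simply cites Strauss \cite{s} --- but it does display the explicit approximants \eqref{eq1}, and your argument is exactly the standard direct check of properties (i)--(iii) for that construction: the monotonicity of $G$ on each half-line gives the signs, continuity at the knots $0,\pm 1/k,\pm k$ plus bounded piecewise derivatives gives the Lipschitz bound, and the mean value theorem together with uniform continuity of $f$ on compacta gives the locally uniform convergence. You also correctly isolate the one genuinely necessary observation, namely that $sf(s)\ge 0$ and continuity force $f(0)=0$, without which the estimate on the middle strip $[-1/k,1/k]$ would not close.
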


The sequence $f_k$ of the previous lemma has some additional properties presented below. Here, we present a suitable improvement in the estimates of the approximating functions  $f_k$ by comparing it with \cite[Lemma
2.2.]{AL}.
\begin{lemma}\label{lemma2}
Let $f: \mathbb{R} \to \mathbb{R}$ be a continuous function satisfying \eqref{growth} for every $s \in \mathbb{R}$.
Then the sequence $f_k$ of Lemma \ref{lemma1} satisfies

(i) $\forall \, k \in \mathbb{N}$, $0\leq sf_k(s) \leq C_1|s|^p\phi_N(2^{\frac{N}{N-1}}\alpha\,|s|^{\frac{N}{N-1}})$ for every $|s|\geq \frac{1}{k}$;
	
(ii) $\forall \, k \in \mathbb{N}$, $0\leq sf_k(s) \leq C_2\frac{1}{k^{p-2}}|s|^2$ for every $|s|\leq \frac{1}{k}$,

\noindent where $C_1$ and $C_2$ are positive constants independent of $k$.
\end{lemma}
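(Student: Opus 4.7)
The plan is to argue case-by-case, following the six pieces of the definition \eqref{eq1}, and to exploit the fact that the growth bound \eqref{growth} can be rewritten, for $t>0$, as $f(t)\le a_1 t^{p-1}\phi_N(\alpha t^{N/(N-1)})$ (with an analogous inequality for $t<0$), where the right-hand side is increasing in $t$ thanks to Remark \ref{R1}. By symmetry it suffices to treat $s\ge 0$; the case $s<0$ follows identically using $|f(t)|\le a_1|t|^{p-1}\phi_N(\alpha|t|^{N/(N-1)})$.

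For part (i), I would split into the regime $1/k\le s\le k$ and the regime $s\ge k$. On the first regime, write
\[
f_k(s)=k\int_s^{s+1/k} f(\xi)\,d\xi\le \max_{\xi\in[s,s+1/k]} f(\xi)\le a_1(s+1/k)^{p-1}\phi_N\!\bigl(\alpha(s+1/k)^{N/(N-1)}\bigr),
\]
using monotonicity of the upper envelope. Because $s\ge 1/k$, we have $s+1/k\le 2s$, and hence $(s+1/k)^{p-1}\le 2^{p-1}s^{p-1}$ and $(s+1/k)^{N/(N-1)}\le 2^{N/(N-1)}s^{N/(N-1)}$; together with $\phi_N$ increasing (Remark \ref{R1}), multiplying by $s$ delivers
\[
sf_k(s)\le a_1 2^{p-1} s^{p}\phi_N\!\bigl(2^{N/(N-1)}\alpha s^{N/(N-1)}\bigr).
\]
On the regime $s\ge k$, $f_k(s)$ is the constant $k\int_k^{k+1/k}f(\xi)d\xi$, bounded as above by $a_1(k+1/k)^{p-1}\phi_N(\alpha(k+1/k)^{N/(N-1)})$. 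Since $k+1/k\le 2k\le 2s$, the same chain of inequalities produces the desired bound, which yields $C_1=a_1 2^{p-1}$.

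For part (ii), i.e. $0\le s\le 1/k$, I would start from
\[
f_k(s)=k^2 s\int_{1/k}^{2/k} f(\xi)\,d\xi.
\]
Using $f(\xi)\le a_1\xi^{p-1}\phi_N(\alpha\xi^{N/(N-1)})$ and monotonicity, on $[1/k,2/k]$ this is majorized by $a_1(2/k)^{p-1}\phi_N(\alpha(2/k)^{N/(N-1)})$. The integration length is $1/k$, so
\[
f_k(s)\le k^2 s\cdot\tfrac{1}{k}\cdot a_1 2^{p-1}k^{-(p-1)}\phi_N\!\bigl(\alpha (2/k)^{N/(N-1)}\bigr)= a_1 2^{p-1} k^{2-p} \phi_N\!\bigl(\alpha (2/k)^{N/(N-1)}\bigr)\,s.
\]
Since for every $k\ge 1$ one has $\phi_N(\alpha(2/k)^{N/(N-1)})\le \phi_N(2^{N/(N-1)}\alpha)=:M$, which is a constant independent of $k$, multiplying by $s$ gives $sf_k(s)\le C_2 k^{-(p-2)} s^2$ with $C_2=a_1 2^{p-1}M$.

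The only subtle point, and the step where I would be most careful, is organizing the \emph{two} applications of the inequality $s+1/k\le 2s$ (respectively $k+1/k\le 2s$) so that the factor $2^{N/(N-1)}$ ends up precisely inside $\phi_N$ rather than as a constant prefactor—this is exactly the refinement over \cite[Lemma 2.2]{AL} alluded to in the introduction, and it will matter later when one applies Lemma \ref{lemf} to absorb a power of $\phi_N$ into a single $\phi_N$ with a slightly larger argument. The non-positivity of $sf_k(s)$ is built into Lemma \ref{lemma1}(i), so only the upper bounds need to be verified.
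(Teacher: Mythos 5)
Your proof is correct and follows essentially the same route as the paper: the same case split according to the branches of \eqref{eq1}, the same monotone envelope $t\mapsto a_1t^{p-1}\phi_N(\alpha t^{N/(N-1)})$, and the same $s+\tfrac{1}{k}\le 2s$ (resp.\ $k+\tfrac{1}{k}\le 2s$) device to push the factor $2^{N/(N-1)}$ inside $\phi_N$; the paper merely phrases $f_k(s)=k[G(s+\tfrac1k)-G(s)]$ via the mean value theorem ($f_k(s)=f(\eta)$) where you bound the integral average by the supremum of the envelope, which is the same elementary fact. The constants differ harmlessly ($C_1=a_12^{p-1}$ for you versus $a_12^{p}$ in the paper, and $\phi_N(2^{N/(N-1)}\alpha)$ versus $\exp(2^{N/(N-1)}\alpha)$ in part (ii)), and your reduction of the negative branches to the positive ones via $|f(t)|\le a_1|t|^{p-1}\phi_N(\alpha|t|^{N/(N-1)})$ is legitimate.
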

\proof Everywhere in this proof, the constant $a_1$ is the one of \eqref{growth}.

\textit{\underline{First step}}. Suppose that $-k\leq s \leq -\frac{1}{k}$.

By the mean value theorem, there exists $\eta\in (s-\frac{1}{k},s)$ such that
\[
f_k(s)=-k[G(s-\frac{1}{k}) - G(s)]=-kG'(\eta)(s-\frac{1}{k}-s)=f(\eta)
\]
and
\[
sf_k(s)=sf(\eta).
\]
Since $s-\frac{1}{k}<\eta<s<0$ and $f(\eta)<0$, we have $sf(\eta) \leq \eta f(\eta)$. Therefore, by using Remark \ref{R1}, we obtain
\[
\begin{array}{rcl}
sf_k(s) \leq \eta f(\eta) &\leq & a_1|\eta|^{p}\phi_N(\alpha\,|\eta|^{\frac{N}{N-1}})\\
 &\leq & a_1|s-\frac{1}{k}|^{p}\phi_N(\alpha\,|s-\frac{1}{k}|^{\frac{N}{N-1}})\\
&\leq & a_1(|s| +\frac{1}{k})^{p}\phi_N(\alpha\,(|s| +\frac{1}{k})^{\frac{N}{N-1}})\\
&\leq& a_1(2|s|)^{p}\phi_N(\alpha\,(2|s|)^{\frac{N}{N-1}})\\
&=& a_12^{p}|s|^{p}\phi_N(2^{\frac{N}{N-1}}\alpha\,|s|^{\frac{N}{N-1}}).
\end{array}
\]

\textit{\underline{Second step}}. Assume $\frac{1}{k}\leq s \leq k$.

By the mean value theorem, there exists $\eta\in (s,s+\frac{1}{k})$ such that
\[f_k(s)=k[G(s+\frac{1}{k}) - G(s)]=kG'(\eta)(s+\frac{1}{k}-s)=f(\eta)\]
and
\[sf_k(s)=sf(\eta).\]
Since $0<s <\eta <s+\frac{1}{k}$ and $f(\eta)>0$, we have $sf(\eta) \leq \eta f(\eta)$. Therefore,
\[
\begin{array}{rcl}
sf_k(s) \leq \eta f(\eta) &\leq & a_1|\eta|^{p}\phi_N(\alpha\,|\eta|^{\frac{N}{N-1}})\\
 &\leq & a_1|s+\frac{1}{k}|^{p}\phi_N(\alpha\,|s+\frac{1}{k}|^{\frac{N}{N-1}})\\
&\leq& a_1(2|s|)^{p}\phi_N(\alpha\,(2|s|)^{\frac{N}{N-1}})\\
&=& a_12^{p}|s|^{p}\phi_N(2^{\frac{N}{N-1}}\alpha\,|s|^{\frac{N}{N-1}}).
\end{array}
\]

\textit{\underline{Third step}}. Suppose that $|s|\geq k$, then
\begin{equation}\label{eq3}
	f_k(s)=\displaystyle\left\{
	\begin{array}{lcc}
	-k[G(-k-\frac{1}{k}) - G(-k)], &\textup{if}& s\leq -k\\
	k[G(k+\frac{1}{k}) - G(k)], &\textup{if}& s\geq k.\\
		\end{array}
		\right.
\end{equation}

If $s\leq -k$, by the mean value theorem, there exists $\eta\in (-k-\frac{1}{k},-k)$ such that
\[f_k(s)=k[G(-k-\frac{1}{k}) - G(-k)]=-kG'(\eta)(-k-\frac{1}{k}-(-k))=f(\eta)\]
and
\[sf_k(s)=sf(\eta).\]
Since $-k-\frac{1}{k} <\eta <-k<0$ and $k<|\eta| < k + \frac{1}{k}$, we conclude that
\begin{equation}\label{eq3.1}
\begin{array}{rcl}
sf_k(s)=\frac{s}{\eta}\eta f(\eta) &\leq& \frac{|s|}{|\eta|}a_1|\eta|^{p}\phi_N(\alpha\,|\eta|^{\frac{N}{N-1}}) =a_1|s||\eta|^{{p}-1}\phi_N(\alpha\,|\eta|^{\frac{N}{N-1}})\\
 &\leq& a_1|s|(k + \frac{1}{k})^{{p}-1}\phi_N(\alpha\,(k + \frac{1}{k})^{\frac{N}{N-1}})\\
 &\leq& a_1|s|(|s| + \frac{1}{k})^{{p}-1}\phi_N(\alpha\,(|s| + \frac{1}{k})^{\frac{N}{N-1}})\\
&\leq& a_1|s|(2|s|)^{{p}-1}\phi_N(\alpha\,(2|s|)^{\frac{N}{N-1}})\\
&\leq& a_12^{{p}}|s|^{p}\phi_N(2^{\frac{N}{N-1}}\alpha\,|s|^{\frac{N}{N-1}}).
\end{array}
\end{equation}

If $s\geq k$, by the mean value theorem, there exists $\eta\in (k,k + \frac{1}{k})$ such that
\[f_k(s)=k[G(k+\frac{1}{k}) - G(k)]=kG'(\eta)(k+\frac{1}{k}-k)=f(\eta).\]
By computations similar to conclude \eqref{eq3.1} one has
\[sf_k(s)=sf(\eta)=\frac{s}{\eta}\eta f(\eta) \leq \frac{|s|}{|\eta|}a_1|\eta|^{p} \phi_N(\alpha\,|\eta|^{\frac{N}{N-1}}) \leq a_12^{{p}}|s|^{p}\phi_N(2^{\frac{N}{N-1}}\alpha\,|s|^{\frac{N}{N-1}}).\]

\textit{\underline{Fourth step}}. Assume $-\frac{1}{k}\leq s\leq \frac{1}{k}$, then
\begin{equation}
	f_k(s)=\displaystyle\left\{
	\begin{array}{lcc}
	k^2s[G(-\frac{2}{k}) - G(-\frac{1}{k})], &\textup{if}& -\frac{1}{k}\leq s\leq 0\\
	k^2s[G(\frac{2}{k}) - G(\frac{1}{k})], &\textup{if}& 0\leq s\geq \frac{1}{k}.\\
		\end{array}
		\right.
\end{equation}

If $-\frac{1}{k}\leq s\leq 0$, by the mean value theorem, there exists $\eta\in (-\frac{2}{k},-\frac{1}{k})$ such that
\[f_k(s)=k^2s[G(-\frac{2}{k}) - G(-\frac{1}{k})]=k^2sG'(\eta)(-\frac{2}{k}-(-\frac{1}{k}))=-ksf(\eta).\]
Therefore
\begin{equation}\label{eq3.2}
    \begin{array}{lll}
    sf_k(s)&=&-ks^2f(\eta)=-k\frac{s^2}{\eta}\eta f(\eta) \leq k\frac{s^2}{|\eta|}\eta f(\eta) \\

&\leq& a_1k|s|^2|\eta|^{{p}-1}\phi_N(\alpha\,|\eta|^{\frac{N}{N-1}}) \leq a_1k|s|^2(\frac{2}{k})^{{p}-1}\phi_N(\alpha\,|\eta|^{\frac{N}{N-1}})\\ &\leq& a_12^{{p}-1}\frac{|s|^2}{k^{p-2}}\phi\left(\alpha\,\left(\frac{2}{k}\right)^{\frac{N}{N-1}}\right)\\
&\leq& a_12^{{p}-1}\exp(2^{\frac{N}{N-1}}\alpha)\frac{1}{{k^{p-2}}}|s|^2. 
\end{array}
\end{equation}

If $0\leq s \leq \frac{1}{k}$, by the mean value theorem, there exists $\eta\in (\frac{1}{k},\frac{2}{k})$ such that
\[f_k(s)=k^2s[G(\frac{2}{k}) - G(\frac{1}{k})]=k^2sG'(\eta)(\frac{2}{k}-\frac{1}{k})=ksf(\eta).\]
By similar computations to conclude \eqref{eq3.2} one obtains
\[sf_k(s)=ks^2f(\eta)=k\frac{s^2}{|\eta|}\eta f(\eta)  \leq a_12^{{p}-1}\exp(2^{\frac{N}{N-1}}\alpha)\frac{1}{{k^{p-2}}}|s|^2. \]
The proof of the lemma follows by taking $C_1=a_12^{p}$ ad $C_2=a_12^{{p}-1}\exp(2^{\frac{N}{N-1}}\alpha)$, where $a_1$ is given in \eqref{growth}. \qed



\section{Solution on bounded domains}\label{S3}

From now on, we assume that $D$ is a bounded domain in
$\mathbb{R}^N$ with $C^2$ boundary $\partial D$. For $r\geq 1$, we
denote by $\|u\|_{L^r(D)}$ the usual norm on the space $L^r(D)$. We
endow $W_0^{1,N}(D)$ with the norm $\|u\|_{W^{1,N}(D)}^N=\|\nabla
u\|_{L^N(D)}^N+\|u\|_{L^N(D)}^N$.

In this section, we focus on the existence of a positive solution for the problem:

$$\left\{
\begin{array}{lll}
-\Delta_N u+|u|^{N-2}u=\lambda a(x)|u|^{q-2}u+f(u) &\mbox{in}&   D,\nonumber\\
u>0&\mbox{in}&   D,\nonumber\\
u(x)= 0 &\mbox{on}&\partial D.\nonumber
\end{array} \right.\leqno(PD)
$$
Here $\lambda>0$ is a parameter, $1<q<N$, $f: [0, \infty) \rightarrow \mathbb{R}$ is a continuous function satisfying \eqref{growth}.

We say that $u\in W_0^{1,N}(D)$ is a solution of $(PD)$ if $u(x)>0$ in
 $D$ and
\begin{eqnarray*}
&	\displaystyle\int_D |\nabla u|^{N-2}\nabla u\nabla \phi   dx
	+\int_D |u|^{N-2} u\phi dx  
		=\lambda \int_{D}a(x)|u|^{q-2}u\phi  dx  + \int_D f(u)\phi dx,  
\end{eqnarray*}
 for all $\phi\in W_0^{1,N}(D)$.

The existence of a solution for problem $(PD)$ is stated below.

\begin{theorem}\label{TP2}
	Suppose that $f:[0,\infty) \to \mathbb{R}$ is a continuous function satisfying \eqref{growth}. Then there exists $\lambda^*>0$ such that for every $\lambda \in (0,\lambda^*)$  problem
	$(PD)$ admits a solution $u_{\lambda}\in W^{1,N}_0(D)$ such
	that $\partial u_{\lambda}/\partial \nu<0$ on $\partial D$, where $\nu$ stands
	for the outer normal to $\partial D$. 
\end{theorem}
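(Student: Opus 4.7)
The strategy is to build a solution of $(PD)$ as a double limit: first approximate $f$ by the Lipschitz functions $f_k$ of Lemma \ref{lemma1} and solve the regularized problem via a Galerkin scheme based on Lemma \ref{prop1}; then let $k\to\infty$. Positivity and the Hopf-type condition on the boundary will be forced by comparison with a sub/supersolution pair built from Lemma \ref{le2}.

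Fix $k\in\mathbb{N}$ and a Schauder basis $\{e_1,e_2,\dots\}$ of the Banach space $(W_0^{1,N}(D),\|\cdot\|_{W^{1,N}(D)})$; set $V_m=\mathrm{span}\{e_1,\dots,e_m\}$. On $V_m\simeq\mathbb{R}^m$, identified with its usual Euclidean inner product, define
\[
\langle F_m(u),v\rangle=\int_D\!\bigl(|\nabla u|^{N-2}\nabla u\nabla v+|u|^{N-2}uv-\lambda a(x)|u|^{q-2}uv-f_k(u)v\bigr)dx.
\]
I would test this at $v=u$ with $\|u\|_{W^{1,N}(D)}=\varrho$ and estimate each term: the $\lambda$-term by Hölder with $a\in L^{N/(N-q)}$ and Sobolev gives a bound $C\lambda\|u\|^q$; the $f_k$-term is split using Lemma \ref{lemma2} into the small part (controlled by $C_2 k^{2-p}\|u\|_2^2$) and the critical part $C_1\int|u|^p\phi_N(2^{N/(N-1)}\alpha|u|^{N/(N-1)})$, which is handled by Hölder, Lemma \ref{lemf}, and the Moser--Trudinger inequality \eqref{TM-N}, provided that $2^{N/(N-1)}\alpha r\varrho^{N/(N-1)}<\alpha_N$ for some $r>1$. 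This produces an inequality of the form
\[
\langle F_m(u),u\rangle\ \geq\ \varrho^N-C\lambda\varrho^q-C\varrho^p
\]
which, since $q<N<p$, is nonnegative on $\|u\|_{W^{1,N}(D)}=\varrho$ whenever $\varrho$ is small and $\lambda<\lambda^{*}$ for a suitable $\lambda^{*}=\lambda^{*}(\varrho)$. Lemma \ref{prop1} then yields $u_{k,m}\in V_m$ with $F_m(u_{k,m})=0$ and $\|u_{k,m}\|_{W^{1,N}(D)}\leq\varrho$.

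The next step is to send $m\to\infty$ for fixed $k$. By reflexivity I extract $u_{k,m}\rightharpoonup u_k$ in $W_0^{1,N}(D)$, and by Rellich--Kondrachov $u_{k,m}\to u_k$ strongly in $L^s(D)$ for every $s\geq 1$. Since $f_k$ is Lipschitz and $D$ bounded, $f_k(u_{k,m})\to f_k(u_k)$ in $L^s(D)$ for every $s$; the standard $S_+$-property of $-\Delta_N$ (tested against $u_{k,m}-u_k$ and combined with the other convergent terms) upgrades the weak convergence to $u_{k,m}\to u_k$ strongly in $W_0^{1,N}(D)$. Thus $u_k$ solves the regularized equation with $f_k$ in place of $f$ and $\|u_k\|_{W^{1,N}(D)}\leq\varrho$. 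Because $f_k$ is Lipschitz, the $C^{1,\alpha}(\overline{D})$ regularity of Lieberman/Tolksdorf applies to $u_k$. To obtain positivity uniformly in $k$, I would pick $b>0$ small enough and let $u_b$ be the solution from Lemma \ref{le2}; then $\underline{u}:=\varepsilon u_b$ becomes a subsolution of $(PD)$ (and of the regularized problem, once $k$ is large enough relative to $\varepsilon$) provided $\varepsilon$ is small, because $f\geq 0$ and the sublinear term $\lambda a(x)|\underline{u}|^{q-2}\underline{u}$ dominates $b\varepsilon^{N-q}|\underline{u}|^{q-2}\underline{u}$ on the set where $\lambda a(x)>b\varepsilon^{N-q}$; a suitable localization (or a modification using a supersolution constructed from a smooth ball-contained profile) together with Proposition \ref{com:Dirichlet} yields $u_k\geq\underline{u}>0$ in $D$.

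Finally, let $k\to\infty$. Since $\|u_k\|_{W^{1,N}(D)}\leq\varrho$, a subsequence converges weakly in $W_0^{1,N}(D)$ and strongly in every $L^s(D)$ to some $u_\lambda$. Item (iii) of Lemma \ref{lemma1} together with Lemma \ref{lemma2} and the Moser--Trudinger inequality produces a dominant of $f_k(u_k)$ that is uniformly integrable, so $f_k(u_k)\to f(u_\lambda)$ in $L^1(D)$; the $S_+$-argument again gives strong $W^{1,N}$ convergence and shows that $u_\lambda$ solves $(PD)$. The uniform lower bound $u_k\geq\varepsilon u_b$ passes to the limit, so $u_\lambda>0$ in $D$; standard regularity yields $u_\lambda\in C^{1,\alpha}(\overline{D})$ and the strong maximum/boundary point principle (Vázquez's version for the $N$-Laplacian) gives $\partial u_\lambda/\partial\nu<0$ on $\partial D$.

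The main obstacle is the critical Moser--Trudinger term: it is what forces the restriction $\lambda<\lambda^{*}$ and a small-ball Galerkin radius $\varrho$, and it is also where the refined estimate of Lemma \ref{lemma2} (with the factor $2^{N/(N-1)}\alpha$ rather than $\alpha$) is essential, since the passage to the limit in $f_k(u_k)$ must avoid concentration of the exponential nonlinearity. A second delicate point is ensuring that the subsolution $\underline{u}$ is comparable with $u_k$ uniformly in $k$ so that the positivity survives the limit; here the $C^{1,\alpha}$-regularity together with $\partial\underline{u}/\partial\nu<0$ is what makes the hypotheses of Proposition \ref{com:Dirichlet} available.
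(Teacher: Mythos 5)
Your overall architecture (Strauss approximation $f_k$, Galerkin on a Schauder basis via Lemma \ref{prop1}, the $S_+$ property to upgrade weak to strong convergence, smallness of the ball radius $\varrho$ dictated by the Moser--Trudinger inequality, and a sub/supersolution comparison built from Lemma \ref{le2}) matches the paper's. However, there is one genuine gap: you have dropped the paper's strictly positive forcing term $\varphi/n$ from the approximate problems, and with it the only non-circular route to positivity. Without that term, $u\equiv 0$ solves both the finite-dimensional system $F_m(\xi)=0$ (since $f_k(0)=0$) and the limit equation, so Brouwer's theorem may perfectly well hand you the trivial solution; testing with $u_-$ only gives $u\ge 0$, and the strong maximum principle needs $u\not\equiv 0$ as input. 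Your plan to obtain positivity by comparing $u_k$ with $\varepsilon u_b$ is circular: Proposition \ref{com:Dirichlet} requires the supersolution $u_2=u_k$ to be \emph{already} positive with $u_1/u_2,\,u_2/u_1\in L^\infty(D)$ (hence a Hopf-type boundary behavior for $u_k$), and Proposition \ref{teorsubsup} requires $u_2>u_1=0$ on $\partial D$, which fails since $u_k=0$ there. In the paper, the chain is: the term $\varphi/n>0$ forces $u\not\equiv 0$, then the strong maximum principle and the boundary point lemma give $u>0$ and $\partial u/\partial\nu<0$, and only \emph{then} is the comparison principle applicable to get the uniform lower bound $u_n\ge u_0$ (with $u_0$ the solution of Lemma \ref{le2} for $b=\lambda\inf_D a$, which is a subsolution everywhere since $\lambda a(x)t^{q-1}+f_n(t)+\varphi/n\ge \lambda a_D t^{q-1}$ --- no $\varepsilon$-scaling or localization needed) that survives the limit $n\to\infty$.

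A secondary, related point: the estimate $\langle F_m(u),u\rangle\ge\varrho^N-C\lambda\varrho^q-C\varrho^p$ must be made nonnegative uniformly in the approximation index; the paper's version carries the extra terms $C_2|D|^{(N-2)/N}n^{2-p}\varrho^2+K_3 n^{-1}\varrho$ coming from $f_n$ near the origin and from $\varphi/n$, absorbed by taking $n\ge n^*$. If you reinstate the forcing term you will need this step too. The rest of your argument (strong convergence via $S_+$, weak $L^{N'}$ convergence of $f_k(u_k)$ from a.e.\ convergence plus the uniform Moser--Trudinger bound, regularity, and Hopf's lemma for the limit) is sound and coincides with the paper's.
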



\subsection{Approximate equation} \label{s.3}

In the proof of Theorem~\ref{TP2} we utilize the following
auxiliary problem:
$$\left\{
\begin{array}{lll}
-\Delta_N u+|u|^{N-2}u=\lambda a(x)|u|^{q-2}u+f_n(u)+\frac{\varphi}{n} &\mbox{in}&   D\nonumber\\
u>0&\mbox{in}&   D\nonumber\\
u(x)= 0 &\mbox{on}&\partial D,\nonumber
\end{array} \right.\leqno(PD_n)
$$
with $n>0$ a integer number, $\varphi$ is a fixed positive function such that $\varphi\in L^{\infty}(\mathbb{R}^N)\cap L^{N'}(\mathbb{R}^N)$ and $f_n$  is given by Lemma \ref{lemma1} and Lemma \ref{lemma2}.

\begin{lemma}\label{lem:appro-sol}
	There exists $\lambda^*>0$ and $n^* \in \mathbb{N}$ such that $(PD_n)$ has a  solution
	$u_n\in C^1_0(\overline{D})$ such that $\partial u_n/\partial \nu<0$ on $\partial D$ for every $\lambda \in (0,\lambda^*)$ and $n\geq n^*$. Furthermore, 
		$$\|u_n\|_{W^{1,N}(D)}\leq \varrho,$$
where $\varrho$ does not depend on $n$.
\end{lemma}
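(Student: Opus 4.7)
The plan is a Galerkin approximation in $W_0^{1,N}(D)$ that exploits the Lipschitz character of $f_n$ (Lemma \ref{lemma1}(ii)) and the freedom in choice of norm afforded by Lemma \ref{prop1}. I would pick any Schauder basis $\{e_j\}_{j\geq 1}$ of $W_0^{1,N}(D)$, let $V_m:=\mathrm{span}\{e_1,\ldots,e_m\}$, and equip $V_m\cong\mathbb{R}^{m}$ simultaneously with an auxiliary inner product $\langle\cdot,\cdot\rangle$ (inducing the role of $|\cdot|_2$ in Lemma \ref{prop1}) and with the general norm $\|\cdot\|_d:=\|\cdot\|_{W^{1,N}(D)}$. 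To force positivity automatically I would work with the truncated problem in which $|u|^{q-2}u$ and $f_n(u)$ are replaced by $(u^{+})^{q-1}$ and $f_n(u^{+})$, defining $F_m\colon V_m\to V_m$ by requiring
\[
\langle F_m(u),v\rangle=\int_D\!\bigl(|\nabla u|^{N-2}\nabla u\cdot\nabla v+|u|^{N-2}u v-\lambda a(u^{+})^{q-1}v-f_n(u^{+})v-\tfrac{\varphi}{n}v\bigr)dx
\]
for every $v\in V_m$; continuity of $F_m$ follows from the Lipschitz bound on $f_n$ together with dominated convergence.

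Next I would establish $\langle F_m(u),u\rangle\geq 0$ on the sphere $\|u\|_{W^{1,N}(D)}=\varrho$ for a judiciously chosen $\varrho$, and $\lambda$, $1/n$ sufficiently small. Testing gives
\[
\langle F_m(u),u\rangle=\|u\|_{W^{1,N}(D)}^{N}-\lambda\!\int_D a(u^{+})^{q}dx-\!\int_D f_n(u^{+})u^{+}dx-\tfrac{1}{n}\!\int_D\varphi u\,dx.
\]
By H\"older and Sobolev, $\lambda\int a(u^{+})^{q}\leq \lambda C_0 \varrho^{q}$ and $\tfrac{1}{n}|\int\varphi u|\leq C_1\varrho/n$. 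For the critical term I would invoke Lemma \ref{lemma2} to bound $f_n(u^{+})u^{+}$ pointwise by $C|u|^{p}\phi_N(2^{N/(N-1)}\alpha|u|^{N/(N-1)})+C n^{-(p-2)}|u|^{2}$, then apply H\"older with some $r>1$ close to $1$, Lemma \ref{lemf} (which absorbs the $r'$-th power of $\phi_N$ into a slightly larger constant inside $\phi_N$), the Sobolev embedding $W_0^{1,N}\hookrightarrow L^{pr}$ to treat $|u|^{p}$, and finally the canonical extension \eqref{utilde} together with \eqref{TM-N} applied to $\tilde u$ to bound the remaining $\phi_N$-integral uniformly, \emph{provided $\varrho$ is small enough that the effective exponent stays strictly below $\alpha_N$}. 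Combining,
\[
\langle F_m(u),u\rangle\geq\varrho^{N}-\lambda C_0\varrho^{q}-K\varrho^{p}-C_2\varrho^{2}/n^{p-2}-C_1\varrho/n.
\]
Since $p>N>q$ and $p>2$, I would first fix $\varrho>0$ small enough that both the Moser--Trudinger threshold and $\varrho^{N}\geq 2K\varrho^{p}$ hold, then choose $\lambda^{*}>0$ small and $n^{*}\in\mathbb{N}$ large to absorb the remaining perturbations, so that $\langle F_m(u),u\rangle\geq 0$ on the sphere. Lemma \ref{prop1} then delivers $u_{n,m}\in V_m$ with $F_m(u_{n,m})=0$ and $\|u_{n,m}\|_{W^{1,N}(D)}\leq\varrho$.

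Finally I would pass to the limit $m\to\infty$. Uniform boundedness in $W_0^{1,N}(D)$, Rellich--Kondrachov compactness in every $L^{s}$ with $s\geq 1$, Lipschitz continuity of $f_n$ (yielding strong convergence of $f_n(u_{n,m}^{+})$), and the standard Browder--Minty monotonicity argument for the $N$-Laplacian furnish a limit $u_n\in W_0^{1,N}(D)$ solving the truncated equation with $\|u_n\|_{W^{1,N}(D)}\leq\varrho$. Testing against $-u_n^{-}$ collapses every term containing $u^{+}$, leaving $\|u_n^{-}\|_{W^{1,N}(D)}^{N}+\tfrac{1}{n}\int_D\varphi u_n^{-}dx=0$; both summands are non-negative, hence $u_n\geq 0$ and the truncation disappears, so $u_n$ solves $(PD_n)$. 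Since $f_n$ is bounded and $a,\varphi\in L^{\infty}$, the right-hand side of $(PD_n)$ is in $L^{\infty}(D)$, so Lieberman's $C^{1,\alpha}$ regularity gives $u_n\in C_0^{1}(\overline{D})$. V\'azquez's strong maximum principle applied to $-\Delta_N u_n+u_n^{N-1}\geq \varphi/n>0$ yields $u_n>0$ in $D$, and the Hopf boundary lemma then provides $\partial u_n/\partial\nu<0$ on $\partial D$. The main obstacle is the a priori estimate in the second step: the critical-growth integral $\int f_n(u^+)u^+\,dx$ has no absolute bound, and controlling it with a constant independent of $m$ and $n$ requires pinning $\varrho$ below a Moser--Trudinger threshold via Lemma \ref{lemf}, which in turn dictates the admissible sizes of $\lambda^{*}$ and $n^{*}$.
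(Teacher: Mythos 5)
Your proposal is correct and follows essentially the same route as the paper: Galerkin approximation in a Schauder basis with Lemma \ref{prop1} applied to the $W^{1,N}$-norm, the split of $f_n(u^+)u^+$ via Lemma \ref{lemma2} combined with Lemma \ref{lemf} and the Moser--Trudinger bound \eqref{TM-N} on the extension $\tilde u$ with $\varrho$ below the threshold, the choice of $\lambda^*$ and $n^*$ to absorb the lower-order terms, the $(S_+)$/monotonicity passage to the limit in $m$, the sign argument via $u^-$, and the maximum principle plus Lieberman regularity. The only tiny imprecision is asserting the right-hand side is in $L^\infty(D)$ before knowing $u_n\in L^\infty(D)$ (the term $a(x)(u^+)^{q-1}$ is a priori only in every $L^s$), but this is repaired exactly as in the paper by first invoking the standard $L^\infty$ estimate and then Lieberman.
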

\begin{proof}
Let $\mathcal{B}=\{w_1,w_2,\dots,w_n,\dots\}$ be a Schauder basis  (see \cite{FJN,Brezis} for details) for the 
Banach space $(W_0^{1,N}(D),\|\cdot\|_{W^{1,N}(D)})$. For each positive integer $m$, let 
\[
W_m=[w_1,w_2,\dots,w_m]
\]  be the $m$-dimensional subspace of $W^{1,N}_0(D)$ generated by $\{w_1,w_2,\dots,w_m\}$  with norm  induced from  $W^{1,N}_0(D)$.  
Let $ \xi=(\xi_{1},\ldots,\xi_{m})\in \mathbb{R}^m$, notice that 
\begin{equation}\label{norm}
    |\xi|_m := \|\sum _{j=1}^{m} \xi_jw_j\|_{W^{1,N}(D)}
\end{equation}
defines a norm in $\mathbb{R}^m$ (see \cite{AL} for the details).

	By using the above notation, we can identify the spaces $(W_m, \|\cdot \|_{W^{1,N}(D)})$ and $(\mathbb{R}^m,|\cdot|_m)$ by the isometric linear
	transformation
	\begin{equation}\label{transf}
	u=\sum_{j=1}^{m}\xi_{j}w_{j}\in W_m\mapsto
	\xi=(\xi_{1},\ldots,\xi_{m})\in\mathbb{R}^{m}.
	\end{equation}

Now,	define the function $F:\mathbb{R}^m \to \mathbb{R}^m$ such that $$F(\xi)=(F_1(\xi),F_2(\xi),\dots, F_m(\xi)),$$ where $\xi=(\xi_1, \xi_2, ..., \xi_m) \in \mathbb{R}^m$,
	\[\begin{array}{lll}
	F_j(\xi)&=&\int_{D}|\nabla u|^{N-2}\nabla u\nabla w_j dx+ \int_{D}|u|^{N-2}uw_j dx- \lambda\int_{D}a(x)(u_+)^{q-1}w_j dx \\ && - \int_{D}f_n(u_+)w_j dx - \frac{1}{n}\int_{D}\varphi w_j dx, 
\end{array}	\]
	$j=1,2,\dots,m$, and $u=\sum_{i=1}^m\xi_i w_i\in W_m$. Therefore,
	\begin{equation}\label{eq4}
	\begin{array}{lll}
	\left\langle F(\xi),\xi \right\rangle&=&\displaystyle\int_{D}|\nabla u|^N dx+\int_{D}|u|^N dx- \lambda\int_{D}a(x)(u_+)^{q} dx \\ && - \int_{D}f_n(u_+)u_+ dx- \frac{1}{n}\int_{D}\varphi udx,	
	\end{array}
	\end{equation}
	where $u_+ = \max\{u,0\}$, $u_- = u_+ - u$.
	
	Given $u \in W_m$ we define
$$
D^+_n=\{x \in D : |u(x)|\geq \frac{1}{n}\}
$$
and
$$
D^-_n=\{x \in D: |u(x)|< \frac{1}{n}\}.
$$
Thus, we can write \eqref{eq4} as
\[\left\langle F(\xi),\xi\right\rangle = \left\langle F(\xi),\xi\right\rangle_P + \left\langle F(\xi),\xi\right\rangle_{N},\]
where
\[\begin{array}{lll}
\left\langle F(\xi),\xi\right\rangle_P&=&\displaystyle\int_{D^+_n}|\nabla u|^N dx+\int_{D^+_n}|u|^N dx- \lambda\int_{D^+_n}a(x)(u_+)^{q} dx \\ &&-  \int_{D^+_n}f_n(u_+)u_+ dx- \frac{1}{n}\int_{D^+_n}\varphi u dx	
\end{array}\]
and
\[\begin{array}{lll} \left\langle F(\xi),\xi\right\rangle_{N}&=&\displaystyle\int_{D^-_n}|\nabla u|^N dx+\int_{D^-_n}|u|^N dx- \lambda\int_{D^-_n}a(x)(u_+)^{q} dx \\&&- \int_{D^-_n}f_n(u_+)u_+ dx- \frac{1}{n}\int_{D^-_n}\varphi udx.
\end{array}\]

	\textit{\underline{Step 1}}. In what follows, $C$ denotes a generic real constant. Since the embedding $W^{1,N}(\mathbb{R}^N)\subset L^{\tau}(\mathbb{R}^N)$ is continuous for all $\tau\geq N$	(see \cite[Corollary 9.11]{Brezis}), we have
	
	\begin{equation}\label{eq4.1}
	\int_{D^+_n}|a(x)|(u_+)^{q}dx\leq  C\|a\|_{L^{N/(N-q)}(\mathbb{R}^N)}\|\tilde{u}\|^{q}_{W^{1,N}(\mathbb{R}^N)}\leq  \frac{K_1}{2}\|\tilde{u}\|^{q}_{W^{1,N}(\mathbb{R}^N)}.
	\end{equation}
By virtue of Lemmas \ref{lemf} and \ref{lemma2}, we get
	\begin{equation}\label{eq4.2}
	\begin{array}{lll}
	\displaystyle\int_{D^+_n}f_n(u_+)u_+ dx &\leq&
	\displaystyle C_1\int_{D^+_n}|u_+|^p\phi_N(2^{\frac{N}{N-1}}\alpha |u_+|^{\frac{N}{N-1}}) dx\\
	&\leq& \left(\displaystyle\int_{\mathbb{R}^N}|\tilde{u}|^{Np} dx\right)^{\frac{1}{N}} \left(\int_{\mathbb{R}^N}(\phi_N(2^{\frac{N}{N-1}}\alpha |\tilde{u}|^{\frac{N}{N-1}}))^{N'} dx\right)^{\frac{1}{N'}}\\
	&\leq& K_2 \|\tilde{u}\|_{W^{1,N}(\mathbb{R}^N)}^{p} \left(\int_{\mathbb{R}^N}(\phi_N(N2^{\frac{2N}{N-1}}\alpha |\tilde{u}|^{\frac{N}{N-1}})) dx\right)^{\frac{1}{N'}}\\
	&\leq& K_2C(\alpha,N)\|\tilde{u}\|_{W^{1,N}(\mathbb{R}^N)}^{p},
	\end{array}
	\end{equation}
	for $\|\tilde{u}\|_{W^{1,N}(\mathbb{R}^N)}$ small enough, where $\tilde{u}$ is given in \eqref{utilde}.
	 Indeed, if \begin{equation}\label{tm}
	     \|\tilde{u}\|_{W^{1,N}(\mathbb{R}^N)}\leq \frac{1}{4}\left(\frac{\alpha_N}{N \alpha}\right)^{(N-1)/N},\end{equation} then \eqref{TM-N} guarantees the following estimate

	\begin{equation}\label{tm2}\begin{array}{rll}
\left(\int_{\mathbb{R}^N}(\phi_N(N2^{\frac{2N}{N-1}}\alpha |\tilde{u}|^{\frac{N}{N-1}})) dx\right)^{\frac{1}{N'}}&=&\\
	 \left(\int_{\mathbb{R}^N}(\phi_N(N2^{\frac{2N}{N-1}}\alpha \|\tilde{u}\|^{{\frac{N}{N-1}}}_{W^{1,N}(\mathbb{R}^N)}|\frac{\tilde{u}}{\|\tilde{u}\|_{W^{1,N}(\mathbb{R}^N)}}|^{\frac{N}{N-1}})) dx\right)^{\frac{1}{N'}}&\leq& C(\alpha,N).
		\end{array}
	\end{equation}
	
Now, since $\varphi\in L^{N'}(\mathbb{R}^N)$, we have

		\begin{equation}\label{eq4.22}
	\begin{array}{rcl}
\displaystyle	\int_{D^+_n}\varphi udx &\leq&\displaystyle \|\varphi\|_{L^{N'}(\mathbb{R}^N)}\|\tilde{u}\|_{L^{N}(\mathbb{R}^N)}\\
	&\leq&\displaystyle \frac{K_3}{2}\|\tilde{u}\|_{W^{1,N}(\mathbb{R}^N)}.
	\end{array}
		\end{equation}
	
	It follows from \eqref{eq4.1}, \eqref{eq4.2} and \eqref{eq4.22} that
	\begin{equation}\label{eq5}
	\begin{array}{rcl}
	\left\langle F(\xi),\xi\right\rangle_P &\geq &\displaystyle\int_{D^+_n}|\nabla u|^N dx+\int_{D^+_n}|u|^N dx- \lambda \frac{K_1}{2}\|\tilde{u}\|^{q}_{W^{1,N}(\mathbb{R}^N)}\\
	&&- \displaystyle K_2C(\alpha, N)\|\tilde{u}\|^{p}_{W^{1,N}(\mathbb{R}^N)} - \frac{K_3}{2n}\|\tilde{u}\|_{W^{1,N}(\mathbb{R}^N)}.
	\end{array}
	\end{equation}
\begin{remark}\label{est90}
    Notice that the constants $K_1$, $K_2$ and $K_3$ do depend on neither  $|D|=\int_Ddx$ nor $n$.
\end{remark}	
	
	\textit{\underline{Step 2}}. In a similarly way, we obtain 
\begin{equation}\label{eq10}
	\int_{D^-_n}|a(x)|(u_+)^{q}dx\leq   \frac{K_1}{2}\|\tilde{u}\|^{q}_{W^{1,N}(\mathbb{R}^N)}.
	\end{equation}
By virtue of Lemma \ref{lemma2} (ii) we obtain
\begin{equation}\label{eq11}
\begin{array}{lll}
\int_{D^-_n}f_n(u_+)u_+ dx&\leq& C_2 \frac{1}{n^{p-2}}\int_{D^-_n}|u_+|^2 dx\\
&\leq& C_2 \frac{1}{n^{p-2}}\left(\int_{D^-_n} dx \right)^{\frac{N-2}{N}}\left(\int_{\mathbb{R}^N}|\tilde{u}|^N dx \right)^{\frac{2}{N}}\\&\leq& C_2|D|^{(N-2)/N} \frac{1}{n^{p-2}} \|\tilde{u}\|_{W^{1,N}(\mathbb{R}^N)}^2.
\end{array}
\end{equation}
We also have

	\begin{equation}\label{eq4.23}
	\begin{array}{rcl}
\displaystyle	\int_{D^-_n}\varphi udx &\leq&\displaystyle \frac{K_3}{2}\|\tilde{u}\|_{W^{1,N}(\mathbb{R}^N)}.
	\end{array}
		\end{equation}

It follows from \eqref{eq10}, \eqref{eq11} and \eqref{eq4.23} that
\begin{equation}\label{eq12}\begin{array}{lll}
	\left\langle F(\xi),\xi\right\rangle_N &\geq& \displaystyle\int_{D^-_n}|\nabla u|^Ndx+\int_{D^-_n}|u|^N dx- \lambda \frac{K_1}{2}\|\tilde{u}\|_{W^{1,N}(\mathbb{R}^N)}^{q} \\ &&-  \frac{C_2 |D|^{(N-2)/N}}{n^{p-2}}\|\tilde{u}\|^2_{W^{1,N}(\mathbb{R}^N)}-  \frac{K_3}{2n}\|\tilde{u}\|_{W^{1,N}(\mathbb{R}^N)}.
\end{array}
\end{equation}
Using that $\|u\|^N_{W^{1,N}(D)}=\|\tilde{u}\|^N_{W^{1,N}(\mathbb{R}^N)}=\|\nabla \tilde{u}\|^N_{L^N(\mathbb{R}^N)} + \|\tilde{u}\|^N_{L^N(\mathbb{R}^N)}$,  inequalities \eqref{eq5} and \eqref{eq12} imply
\begin{equation}\label{eq13}
\begin{array}{rcl}
\displaystyle\left\langle F(\xi),\xi\right\rangle  &\geq& \displaystyle\|\tilde{u}\|^N_{W^{1,N}(\mathbb{R}^N)} - \lambda K_1\|\tilde{u}\|^{q}_{W^{1,N}(\mathbb{R}^N)}- \displaystyle K_2C(\alpha,N)\|\tilde{u}\|^{p}_{W^{1,N}(\mathbb{R}^N)}\\
&&  -  \frac{C_2|D|^{(N-2)/N}}{n^{p-2}}\|\tilde{u}\|^2_{W^{1,N}(\mathbb{R}^N)} -  \frac{K_3}{n}\|\tilde{u}\|_{W^{1,N}(\mathbb{R}^N)}.
\end{array}
\end{equation}

	Now, let  $|\xi|_m=\|\tilde{u}\|_{W^{1,N}(\mathbb{R}^N)}=\varrho$ for some $\varrho>0$ to be chosen later. Thus, we have
\[	\begin{array}{rcl}
\displaystyle\left\langle F(\xi),\xi\right\rangle  &\geq& \displaystyle \varrho^N - \lambda K_1\varrho^{q}- \displaystyle K_2C(\alpha,N)\varrho^{p}  -  \frac{C_2|D|^{(N-2)/N}}{n^{p-2}}\varrho^2 -  \frac{K_3}{n}\varrho.
\end{array}\]

If $\varrho$ is such that
	\[
	\varrho\leq \frac{1}{(2 K_2 C(\alpha,N))^{\frac{1}{p-N}}},
	\]
	then \[\varrho^N - K_2C(\alpha,N)\varrho^{p} \geq \frac{\varrho^N}{2}.\]

Thus, by choosing \begin{equation}\label{esthro}
    \varrho:=\min\left\{\frac{1}{(2 K_2 C(\alpha,N))^{\frac{1}{p-N}}}, \frac{1}{4}\left(\frac{\alpha_N}{N \alpha}\right)^{(N-1)/N}\right\},
\end{equation} we obtain
	\[
	\begin{array}{rcl}
\displaystyle\left\langle F(\xi),\xi\right\rangle  &\geq& \displaystyle \frac{\varrho^N}{2} - \lambda K_1\varrho^{q}  -  \frac{C_2|D|^{(N-2)/N}}{n^{p-2}}\varrho^2 -  \frac{K_3}{n}\varrho.
\end{array} 
	\]
	Now,  define  $\varsigma:=\frac{\varrho^N}{2} - \lambda K_1\varrho^{q}$. If we choose $$
	\lambda^*:=\frac{\varrho^{N-q}}{4K_1}>0,
	$$
then $\varsigma>\frac{\varrho^N}{4}$ for all $0<\lambda < \lambda^*$.
		Now, we choose $n^* \in \mathbb{N}$ such that
	\[\frac{C_2|D|^{(N-2)/N}}{n^{p-2}}\varrho^2 +  \frac{K_3}{n}\varrho<\frac{\varsigma}{2},\]
	for every $n\geq n^*$. Notice that $n^*$ depends on the domain $D$. Since $\xi \in \mathbb{R}^m$ is such that $|\xi|_m=\varrho$, then for $\lambda < \lambda^*$ and  $n\geq n^*$ we obtain
	\begin{equation}\label{eq8}
	\left\langle F(\xi),\xi\right\rangle \geq \frac{\varsigma}{2}>0.
	\end{equation}
	
	Since  $f_n$ is a Lipschitz function (for each $n \in \mathbb{N}$), it easy to see that $F: \mathbb{R}^m \rightarrow \mathbb{R}^m$ is a continuous function. Thus, for each $\lambda<\lambda^*$ and $n>n^*$ fixed, Lemma \ref{prop1} ensure the existence of   $y \in \mathbb{R}^m$ with $|y|_m\leq \varrho$ and such that $F(y)=0$. In other words, there exists $u_m \in W_m$ verifying
	\begin{equation}\label{est.unif}
	\|u_m\|_{W^{1,N}(D)}\leq \varrho,
	\end{equation}
	and such that
	\begin{equation}\label{eq15}
	\begin{array}{l}\int_{D}|\nabla u_m|^{N-2}\nabla u_m\nabla w dx+ \int_{D}|u_m|^{N-2}u_mwdx=\\ \lambda\int_{D}a(x)(u_{m+})^{q-1}w dx+ \int_{D}f_n(u_{m+})w dx+ \frac{1}{n}\int_{D}\varphi wdx,  
	\end{array}\end{equation}
	for all  $w \in W_m$.
\begin{remark}\label{rem1}
    It is important to mention that $\varrho$, given in \eqref{esthro}, does not depend on the domain $D$, $m$ nor $n$.
\end{remark}	
	Since $W_m \subset W_0^{1,N}(D)$ $\forall \, m \in \mathbb{N}$ and $\varrho$ does not depend on $m$, then $(u_m)_{m\in\mathbb{N}}$ is a bounded sequence in $W_0^{1,N}(D)$. Therefore, for some subsequence, there exists $u \in W_0^{1,N}(D)$ such that
	\begin{equation}\label{eq16}
	u_m \rightharpoonup u \,\,\, \mbox{weakly in} \,\,\, W_0^{1,N}(D),
	\end{equation}
	\begin{equation}\label{eq16.1}
	u_m \to u \,\,\, \mbox{in} \,\,\, L^s(D)\,\,\,s\geq N,
	\end{equation}
	\begin{equation}\label{eq16.1z}
	u_m \to u, \,\,\,  \mbox{a.e. in}\,\,\,D.
	\end{equation}
	
	Thus, 
\begin{equation}\label{n1}
\|u\|_{W^{1,N}(D)} \leq \liminf_{m \to \infty}\|u_m\|_{W^{1,N}(D)}\leq \varrho.
\end{equation}

	
We claim that 
\begin{equation}\label{conv1}
    u_m\rightarrow u \mbox{ in } W^{1,N}_0(D).
\end{equation}
Indeed, using the fact that  $\mathcal{B}=\{w_1,w_2,\dots,w_n,\dots\}$ is a Schauder basis of $W^{1,N}_0(D)$, for every $u \in W^{1,N}_0(D)$  there exists a unique sequence $(\alpha_n)_{n\geq 1}$ in $\mathbb{R}$  such that $u=\sum _{j=1}^{\infty}\alpha_j w_j $, so that
\begin{equation}\label{166}
\psi_m:=\sum_{j=1}^m \alpha _j w_j \rightarrow u \,\, \mbox{ in } W_0^{1,N}(D)\,\,\mbox{ as }m\rightarrow \infty.\end{equation}
Using  $w=(u_m-\psi_m)\in W_m$ as test function in \eqref{eq15}, we obtain
	\begin{equation}\label{eq17z}
	\begin{array}{l}\int_{D}|\nabla u_m|^{N-2}\nabla u_m\nabla(u_m-\psi_m) dx+ \int_{D}|u_m|^{N-2}u_m(u_m-\psi_m)dx=\\ \lambda\int_{D}a(x)(u_{m+})^{q-1}(u_m-\psi_m)dx+ \int_{D}f_n(u_{m+})(u_m-\psi_m) dx\\+ \frac{1}{n}\int_{D}\varphi (u_m-\psi_m)dx.  
	\end{array}
	\end{equation}

From \eqref{eq16}, \eqref{eq16.1} and \eqref{166}, it is easy to see that
\begin{equation}\label{eq113}
\begin{array}{l}
 \int_{D}(|u_m|^{N-1}+|\lambda a(x)(u_{m+})^{q-1}|+ \frac{1}{n}|\varphi|) |u_m-\psi_m|dx\leq \\  (\|u_n\|_{L^{N}(D)}^{N-1}+\lambda\|a\|_{L^{N/(N-q)}(D)}\|u_n\|^{q-1}_{L^{N}(D)}+\frac{1}{n}\|\varphi\|_{L^{N'}(D)})\|u_m-\psi_m\|_{L^{N}(D)}.
\end{array}
\end{equation}

By continuity of $f_n$ and \eqref{eq16.1z} we obtain
 \begin{equation*}
f_n(u_{m_+})^{N'} \to f_n(u_+)^{N'} \,\,\, \mbox{ a.e. in}\,\,\, D.
\end{equation*}
By Lemma \ref{lemma1},  \eqref{est.unif}, and  by using H\"{o}lder inequality we obtain
\begin{equation}
    \begin{array}{lll}
        \displaystyle\int_{D}f_n(u_{m+})^{N'}dx &\leq& \displaystyle c_n^{N'}\int_{D}|u_m|^{N'}dx \\ 
&\leq& c_n^{N'}C|D|^{N/(N-N')}\|u_m\|^{N'}_{W^{1,N}(D)}\\ &\leq& c_n^{N'}C|D|^{N/(N-N')}\varrho^{N'}.
        \end{array}
\end{equation}

Hence, \cite[Theorem 13.44]{HS} leads to 
 \begin{equation}\label{eq16.7}
f_n(u_{m_+}) \to f_n(u_+) \,\,\, \mbox{ weakly in } \,\,\, L^{N'}(D).
\end{equation}
Applying \eqref{eq16.1}, \eqref{166} and \eqref{eq16.7}, we conclude that 
\begin{equation}\label{eq114}
\lim _{m\rightarrow \infty} \displaystyle \int_{D}f_n(u_{m+})(u_m-\psi_m) dx=0.
\end{equation}

By \eqref{est.unif} and  \eqref{166}, we obtain
\begin{equation}\label{eq70z}
\lim_{m\rightarrow\infty} \displaystyle\int_{D}|\nabla u_m|^{N-2}\nabla u_m\nabla (u-\psi_m)dx=0. 
\end{equation}
By \eqref{eq17z},
\eqref{eq113}
\eqref{eq114} and
\eqref{eq70z}, we obtain
\begin{equation}\label{eq71}
\lim_{m\rightarrow\infty} \displaystyle\int_{D}|\nabla u_m|^{N-2}\nabla u_m\nabla (u_m-u)dx=0. 
\end{equation}
Now it is sufficient to apply the $(S_+)-$ property of $-\Delta_N$
(see, e.g., \cite[Proposition 3.5.]{MonMonPapa}) to obtain 
\eqref{conv1}.


	
		Now, for every $m\geq k$ we obtain
	\begin{equation}\label{eq17}
	\begin{array}{l}\int_{D}|\nabla u_m|^{N-2}\nabla u_m\nabla w_k dx+ \int_{D}|u_m|^{N-2}u_mw_kdx=\\ \lambda\int_{D}a(x)(u_{m+})^{q-1}w_k dx+ \int_{D}f_n(u_{m+})w_k dx+ \frac{1}{n}\int_{D}\varphi w_kdx,  
	\end{array}
	\end{equation}
	for all $w_k \in W_k$.
	
	It follows from \eqref{conv1} and \eqref{eq16.7} that

	\begin{equation}\label{eq20}\begin{array}{l}
	\int_{D}|\nabla u|^{N-2}\nabla u\nabla w_k dx+ \int_{D}|u|^{N-2}uw_kdx=\\ \lambda\int_{D}a(x)(u_{+})^{q-1}w_k dx+ \int_{D}f_n(u_{+})w_k dx+ \frac{1}{n}\int_{D}\varphi w_kdx,
	\end{array}\end{equation}  for all $w_k \in W_k$.
	Since $[W_k]_{k \in \mathbb{N}}$ is dense in $W_0^{1,N}(D)$ we conclude that
	\begin{equation}\label{eq21}\begin{array}{l}
	 	\int_{D}|\nabla u|^{N-2}\nabla u\nabla w dx+ \int_{D}|u|^{N-2}uwdx=\\ \lambda\int_{D}a(x)(u_+)^{q-1}w dx+ \int_{D}f_n(u_{+})w dx + \frac{1}{n}\int_{D}\varphi wdx, 
	\end{array}
	\end{equation}
	for all $w \in W_0^{1,N}(D)$. Furthermore, $u\geq 0$ in $D$. In fact, since $u_- \in W_0^{1,N}(D)$ then from \eqref{eq21} we obtain
	\[\begin{array}{lll}
- \|u_-\|^N_{W_0^{1,N}(D)} &=&	\int_{D}|\nabla u|^{N-2}\nabla u\nabla u_- dx+ \int_{D}|u|^{N-2}uu_-dx \\&=& \lambda\int_{D}a(x)(u_{+})^{q-1}u_-dx + \int_{D}f_n(u_{+})u_-dx + \frac{1}{n}\int_{D}\varphi u_-dx\\ &\geq& 0.	\end{array}\]
	Then $u_- \equiv 0$ a.e. in $D$, whence $u\ge 0$ a.e. in $D$. Moreover, $u\not\equiv 0$ is valid
	due to  $\frac{\varphi}{n}> 0$ in $D$.  By applying the strong maximum principle in
	\cite[Theorem 5.4.1]{PS}  we obtain  $u>0$ in $D$, and   \cite[Theorem
	5.5.1]{PS} ensure that $\partial u/\partial \nu<0$ on
	$\partial D$ holds. By Lemma \ref{lemma1} and \cite[Theorem 7.1]{LU} we conclude that $u\in L^{\infty}(D)$. Thus, \cite[Theorem 1]{lieberman} and \cite[p. 320]{L} ensure the regularity up to the boundary $u\in C^{1,\beta}(\overline{D})$, for some $\beta \in (0,1)$.

Therefore, we conclude that proof of the lemma by taking $u_n=u$.

\end{proof} 


\subsection{ Proof of Theorem~\ref{TP2}}
First we show that $(PD)$ has a positive solution. For each $n \in \mathbb{N}$, $n> n^*$, by Lemma \ref{lem:appro-sol},  equation $(PD_n)$ has a  solution $u_n \in W^{1,N}_{0}(D) \cap C^{1,\beta}(\overline{D})$. Thus
\begin{equation}\label{eq70}
	\begin{array}{l}
	\displaystyle \int_{D}|\nabla u_n|^{N-2}\nabla u_n\nabla wdx + \int_{D}|u_n|^{N-2}u_n wdx = \\ \displaystyle \lambda\int_{D}a(x)u_n^{q}w dx+ \int_{D}f_n(u_n)wdx + \frac{1}{n}\int_{D}\varphi wdx,\end{array}
	\end{equation}
	for all  $w \in W_0^{1,N}(D)$.

By \eqref{n1} we have that
\begin{equation}\label{c12}
\|u_n\|_{W^{1,N}(D)} \leq \varrho, \,\, \forall\, n \in \mathbb{N},
\end{equation}
and $\varrho$ does not depend on $n$. Thus, along a subsequence again relabeled as $u_n$, there exists $u \in W^{1,N}_{0}(D)$ such that
\begin{equation}\label{eq23}
u_n \rightharpoonup u \,\,\, \mbox{weakly in} \,\,\, W^{1,N}_{0}(D), \,\, \mbox{as} \,\, n \rightarrow \infty.
\end{equation}

	Thus, 
\begin{equation}\label{l11}
\|u\|_{W^{1,N}(D)} \leq \liminf_{n \to \infty}\|u_n\|_{W^{1,N}(D)}\leq \varrho.
\end{equation}

We claim that 
\begin{equation}\label{eq23zl}
u_n \rightarrow u \,\,\,  W^{1,N}_{0}(D), \,\, \mbox{as} \,\, n \rightarrow \infty.
\end{equation}


In fact, the proof of \eqref{eq23zl}  follows in a similarly way  as we did in the previous section.

First, notice that
 Lemma \ref{lemma1} and \eqref{c12} imply 
\begin{equation}
    \begin{array}{lll}
        \displaystyle\int_{D}f_n(u_{n})^{N'}dx &\leq& \displaystyle c_n^{N'}\int_{D}|u_n|^{N'}dx
\leq  c_n^{N'}C\varrho^{N'}.
        \end{array}
\end{equation}
Moreover, 
\[
u_n \rightarrow u\,\, \mbox{a.e. in}\,\, D,
\]
and by the uniform convergence of Lemma \ref{lemma1} ($iii$) we have
\begin{equation}\label{eq26.2zl}
f_n(u_n(\cdot)) \rightarrow f(u(\cdot))\,\, \mbox{a.e. in}\,\, D.
\end{equation}
Hence, \cite[Theorem 13.44]{HS} leads to 
 \begin{equation}\label{eq16.7lz}
f_n(u_{n}) \to f(u) \,\,\, \mbox{ weakly in } \,\,\, L^{N'}(D).
\end{equation}
On the other hand, taking $w=(u_n-u)$ as a test function in \eqref{eq70}, we get

\begin{equation}\label{est7z}
		\begin{array}{ll}
	&\displaystyle \int_{D}|\nabla u_n|^{N-2}\nabla u_n\nabla (u_n-u)dx \\  
	\leq & - \int_{D}|u_n|^{N-2}u_n (u_n-u)dx  + \lambda\int_{D}a(x)u_n^{q-1}(u_n-u) dx \displaystyle\\
	& + \int_{D}f_n(u_n)(u_n-u)dx+ \frac{1}{n}\int_{D}\varphi (u_n-u)dx\\
	\leq& \|u_n\|^{N-1}_{L^{N}(D)}\|u_n-u\|_{L^{N}(D)} \\ 
	& +\lambda\|a\|_{L^{N/(N-q)}(D)}\|u_n\|^{q-1}_{L^{N}(D)}\|u_n-u\|_{L^{N}(D)}\\  & +\|f_n(u_n)\|_{L^{N'}(D)}\|u_n-u\|_{L^{N}(D)}\\ & +\frac{1}{n}\|\varphi\|_{L^{N'}(D)}\|u_n-u\|_{L^{N}(D)} 
		\rightarrow 0 \,\, \mbox{ as }n\rightarrow\infty.
	\end{array}
\end{equation}
And then, $\limsup_{n\rightarrow \infty}\int_{D}|\nabla u_n|^{N-2}\nabla u_n\nabla (u_n-u)dx \leq 0$. Therefore, \eqref{eq23zl} follows by  $(S_+)$ property.

	By using \eqref{eq23zl} and \eqref{eq16.7lz},  we can pass to the limit in \eqref{eq70} to obtain

\begin{equation}\label{est7}
		\begin{array}{l}
	\displaystyle \int_{D}|\nabla u|^{N-2}\nabla u\nabla wdx + \int_{D}|u|^{N-2}u wdx =  \displaystyle \lambda\int_{D}a(x)u^{q-1}w dx+ \int_{D}f(u)wdx,\end{array}
\end{equation}
	for all  $w \in W_0^{1,N}(D)$.
Thus,  $u$ is a  solution of $(PD)$.


Now,  fix a positive constant $\lambda$ such that
\begin{equation}\label{eq:b}
\lambda< \lambda^*=\frac{\varrho^{N-q}}{4K_1}.
\end{equation}
Since $a>0$ is a continuous function, define 
$$a_D=\inf_{D} a(x).$$
Then, according to Lemma \ref{le2}, there exists a
positive solution $u_{0}$ of
$$
\left\{
\begin{array}{lll}
-\Delta_Nu +|u|^{N-2}u=\lambda a_Du^{q-1} &\mbox{in}&D,\\
u>0&\mbox{in}&  D,\\
u=0 & \mbox{on} & \partial D.
\end{array} \right.
$$

Let $u_n$ be a positive solution of problem $(PD_n)$
obtained by Lemma~\ref{lem:appro-sol}. We observe that
$u_{0}/u_n, u_n/u_{0} \in L^\infty(D)$ because $u_{0}$
and $u_n$ are positive functions belonging to
$C^{1,\beta}_0(\overline{D})$ and satisfying $\partial u_n/\partial \nu<0$, $\partial u_{0}/\partial \nu<0$
on $\partial D$.
 Notice that
\begin{equation*}
\lambda a(x)t^{q-1} + f_n(t)+\frac{\varphi}{n}
\geq \lambda a_D t^{q-1}=g(t).
\end{equation*}
Hence, $u_1=u_{0}$ and $u_2=u_n$ are a positive subsolution and a positive supersolution of
problem \eqref{19}, respectively. Thus, by Proposition \ref{com:Dirichlet} we see that
$u_n\ge u_{0}$ in $D$ for every $n>n^*$.
Therefore, by passing to the limit we obtain  $$u\geq u_{0} \mbox{ a.e. in }D.$$ Thus, we conclude that $u$ is a positive
solution of problem $(PD)$.


From now, the solution we just found will be labeled as $u_\lambda$ with explicit dependence on $\lambda$. In what follows, we will deduce that $\|u_{\lambda}\|_{W^{1,N}(D)} \to 0$ as $\lambda \to 0$. Fix the pair $(\lambda,u_{\lambda})$, where  $\lambda$ $\in(0,\lambda^*)$ and  $u_{\lambda}$ is the corresponding solution of problem $(PD)$. By using $w=u_{\lambda}$ as a test function in \eqref{est7}, we obtain

\begin{equation}\label{est8}
		\begin{array}{lll}
	\displaystyle \int_{\mathbb{R}^N}|\nabla \tilde{u}_{\lambda}|^Ndx + \int_{\mathbb{R}^N}\tilde{u}_{\lambda}^N dx &=& 
	\displaystyle \int_{D}|\nabla u_{\lambda}|^Ndx + \int_{D}u_{\lambda}^N dx\\
	 &=&  \displaystyle \lambda\int_{D}a(x)u_{\lambda}^{q} dx+ \int_{D}f(u_{\lambda})u_{\lambda}dx\\
	 &=&  \displaystyle \lambda\int_{\mathbb{R}^N}a(x)\tilde{u}_{\lambda}^{q} dx+ \int_{\mathbb{R}^N}f(\tilde{u}_{\lambda})\tilde{u}_{\lambda}dx\\
	&\leq& \lambda K_1 \|\tilde{u}_{\lambda}\|^{q}_{W^{1,N}(\mathbb{R}^N)} +K_2 C(\alpha, N) \|\tilde{u}_{\lambda}\|^{p}_{W^{1,N}(\mathbb{R}^N)} ,
	 \end{array}
\end{equation}
where $K_1$, $K_2$ are given in \eqref{eq4.1}, 
\eqref{eq4.2}, respectively.
	
	 Since $\tilde{u}_{\lambda} \neq 0$, from \eqref{est8}, we have the following estimate

\begin{equation}\label{est9}
		\begin{array}{lll}
\|\tilde{u}_{\lambda}\|^{N-q}_{W^{1,N}(\mathbb{R}^N)} (1-	K_2C(\alpha,N) \|\tilde{u}_{\lambda}\|^{p -N}_{W^{1,N}(\mathbb{R}^N)}) &\leq& \lambda K_1.
	 \end{array}
\end{equation}
By combining  \eqref{esthro} and \eqref{c12}, we obtain  
$$  \|\tilde{u}_{\lambda}\|^{p -N}_{W^{1,N}(\mathbb{R}^N)} \leq  \frac{1}{2K_2 C(\alpha, N)}.$$
Thus,
\begin{equation}\label{est10}
		\begin{array}{lll}
\|u_{\lambda}\|_{W^{1,N}(D)} =\| \tilde{u}_{\lambda}\|_{W^{1,N}(\mathbb{R}^N)}  &\leq& (2 \lambda K_1)^{1/(N-q)} \to 0 \quad as \quad \lambda \to 0.
	 \end{array}
\end{equation}
Thus, the proof of the theorem is complete.

 


\section{Proof of the Main Theorem}\label{S10}
\subsection{A priori estimates}\label{S5-1}

In this subsection, for convenience, when necessary, we will omit the notation $\tilde{u}$. In order to prove Theorem  \ref{TP} and \ref{T:infinity}, it will be needed a couple of estimates proven in the following results. It is important to mention that much less is known about the results of regularity for the $L_p$ operator, as can be seen in  \cite{Isaia}. This becomes an obstacle to obtain uniform estimates in the sense of H\"{o}lder norm.

 Fix  $u\in W^{1,N}(D)$ any positive solution of $(PD)$ given by Theorem \ref{TP2}. Here, we will borrow some ideas from \cite{FMT}. Define $u_M:=\min\{u,M\}$ for $M>0$.
Choose $\overline{p}^*$ satisfying $2N^2<\overline{p}^*$. For $R^\prime>R>0$, we take a smooth function $\eta_{R,R^\prime}$
such that $0\le \eta_{R,R^\prime}\le 1$,
$\|\eta_{R,R^\prime}^\prime\|_\infty\le 2/(R^\prime-R)$,
$\eta_{R,R^\prime}(t)=1$ if $t\le R$ and
$\eta_{R,R^\prime}=0$ if $t\ge R^\prime$.

\begin{lemma}\label{lem:element}
	Let $x_0\in\mathbb{R}^N$, $M>0$, $R^\prime>R>0$, such that $B(x_0,R')\subset D$, 
	$\gamma_1=\frac{N}{N-q}>1$ and $\gamma_1^\prime=\frac{N}{q}$ such that $1/\gamma_1+1/\gamma_1^\prime=1$.
	Denote $\eta(x):=\eta_{R,R^\prime}(|x-x_0|)$.
	Assume that $2N\le \tilde{p}$ (in particular $\gamma_1^\prime\le \tilde{p}$) and $u  \in W^{1,N}(D)$ a solution of ($PD$), in particular, $u\in L^{\tilde{p}(N+\beta)}(B(x_0,R^\prime))$ with $\beta\ge 0$.
	Then it holds:
	\begin{eqnarray}
	\lefteqn{\int_{B(x_0,R^\prime)} f(u)uu_M^{\beta}\eta^{N}dx\,  
		\le C(\varrho)
		\|u\|^{\beta}_{L^{\tilde{p}(N+\beta)}(B(x_0,R^\prime))}B_{R^\prime}}
	\label{eq:element-1}\\
	&&\int_{B(x_0,R^\prime)} a(x) u^{q}u_M^{\beta}\eta^{N}dx\,  
	\le \|a\|_{L^{\gamma_1}(B(x_0,R^\prime))}
	\|u\|_{L^{\tilde{p}(N+\beta)}(B(x_0,R^\prime))}^{q+\beta}
	B_{R^\prime}
	\label{eq:element-2}
	\end{eqnarray}
	where $B_{R^\prime} :=(1+|B(0,R^\prime)|)$ and
	$|B(0,R^\prime)|$ denotes the Lebesgue measure of the ball $B(0,R^\prime)$.
	
\end{lemma}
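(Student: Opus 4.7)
The strategy is to prove the two estimates separately by direct applications of Hölder's inequality, in each case pairing the factor $u_M^\beta$ with an exponent that turns it into $\|u\|_{L^{\tilde p(N+\beta)}(B(x_0,R'))}^\beta$, and then dominating what remains either by the Moser-Trudinger inequality together with Lemma \ref{lemf} and Sobolev embeddings (for the nonlinear critical term) or by the $L^{\gamma_1}$-norm of $a$ (for the subcritical term). Throughout I would exploit the a priori bound $\|u\|_{W^{1,N}(D)}\le\varrho$, which by \eqref{esthro}--\eqref{tm} is small enough for Moser-Trudinger to control moderate powers of $\phi_N(\alpha u^{N/(N-1)})$, and the fact that the zero-extension $\tilde u$ lies in $W^{1,N}(\mathbb{R}^N)$ with the same norm.

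For \eqref{eq:element-2} I would first apply Hölder with the conjugate pair $(\gamma_1,\gamma_1')$ to peel off $a(x)$:
$$
\int_{B(x_0,R')}a(x)u^q u_M^\beta \eta^N\, dx\le \|a\|_{L^{\gamma_1}(B(x_0,R'))}\left(\int_{B(x_0,R')}u^{q\gamma_1'}u_M^{\beta\gamma_1'}\eta^{N\gamma_1'}dx\right)^{1/\gamma_1'}.
$$
Using $u_M\le u$ and $\eta\le 1$, the remaining integrand is bounded by $u^{(q+\beta)\gamma_1'}$. A second Hölder step with exponent $\tilde p(N+\beta)/((q+\beta)\gamma_1')=(\tilde p q/N)\cdot(N+\beta)/(q+\beta)$, which is $\ge 1$ because $\tilde p\ge 2N$ and $1<q<N$, then yields $\|u\|_{L^{\tilde p(N+\beta)}(B(x_0,R'))}^{q+\beta}$ multiplied by $|B(x_0,R')|^{q/N-(q+\beta)/(\tilde p(N+\beta))}$. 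The exponent on the ball lies in $(0,q/N)\subset(0,1)$, so the volume factor is dominated by $B_{R'}=1+|B(0,R')|$, and \eqref{eq:element-2} follows.

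For \eqref{eq:element-1} and $\beta>0$, I would start from \eqref{growth} to write $f(u)u\le a_1 u^p\phi_N(\alpha u^{N/(N-1)})$ and then apply Hölder with conjugate exponents $\tau=\tilde p(N+\beta)/\beta$ and $\tau'=\tilde p(N+\beta)/(\tilde p(N+\beta)-\beta)$, assigning $\tau$ to $u_M^\beta$, to obtain
$$
\int u^p u_M^\beta \phi_N(\alpha u^{N/(N-1)})\eta^N\, dx\le \|u\|_{L^{\tilde p(N+\beta)}(B(x_0,R'))}^\beta \left(\int u^{p\tau'}\phi_N(\alpha u^{N/(N-1)})^{\tau'}dx\right)^{1/\tau'}.
$$
The exponent $\tau'$ is bounded above uniformly in $\beta\ge 0$ by $\tilde p/(\tilde p-1)\le 2N/(2N-1)$. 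A further Hölder with indices $2,2$ splits the remaining factor into $\|u\|_{L^{2p\tau'}(\mathbb{R}^N)}^{p}$, controlled by $C\varrho^p$ through the Sobolev embedding $W^{1,N}(\mathbb{R}^N)\hookrightarrow L^{2p\tau'}(\mathbb{R}^N)$ (valid since $2p\tau'>N$), and $\bigl(\int\phi_N(\alpha u^{N/(N-1)})^{2\tau'}dx\bigr)^{1/(2\tau')}$; Lemma \ref{lemf} replaces the latter integrand by $C\,\phi_N(\beta_0\alpha u^{N/(N-1)})$ for some $\beta_0>2\tau'$, and choosing $\beta_0\le N\cdot 4^{N/(N-1)}$ (possible because $\tau'$ is uniformly bounded, and \eqref{esthro} forces $N\cdot 4^{N/(N-1)}\alpha\varrho^{N/(N-1)}\le\alpha_N$) lets \eqref{TM-N} bound the factor by $C(\alpha,N)$. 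The case $\beta=0$ is handled analogously (and more simply) by applying Hölder with indices $2,2$ directly on $\int u^p\phi_N(\alpha u^{N/(N-1)})\eta^N dx$. Packaging the bounds into $C(\varrho)$ and using $B_{R'}\ge 1$ produces \eqref{eq:element-1}.

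The main technical point to verify is that the constant $C(\varrho)$ does not depend on $\beta$, since the Lemma will be iterated over a sequence of $\beta$'s in a Moser-type bootstrap; the hypotheses $\tilde p\ge 2N$ and $q>1$ are exactly what keep $\tau'$, and hence $\beta_0$, bounded uniformly in $\beta$, so that the smallness of $\varrho$ from \eqref{esthro} leaves enough room for \eqref{TM-N} to apply with a $\beta$-independent constant.
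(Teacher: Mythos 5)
Your proof is correct, and for \eqref{eq:element-2} it supplies exactly the two-step H\"older argument that the paper compresses into ``we easily show.'' For \eqref{eq:element-1} you use the same toolkit as the paper --- the growth condition \eqref{growth} via Lemma \ref{lemma2}, iterated H\"older, the Sobolev embedding, Lemma \ref{lemf}, and the Moser--Trudinger inequality \eqref{TM-N} under the smallness \eqref{esthro} of $\varrho$ --- but with a different bookkeeping of exponents. The paper first splits with the fixed pair $(N,N')$, sending $[\phi_N(\alpha|u|^{N/(N-1)})]^{N'}$ to Moser--Trudinger, then splits $|u|^{Np}u_M^{N\beta}$ with $(2,2)$ to land in $\|u\|^{p}_{L^{2Np}}\|u\|^{\beta}_{L^{2N\beta}}$, and only at the end converts $\|u\|_{L^{2N\beta}}^{\beta}$ into $\|u\|^{\beta}_{L^{\tilde p(N+\beta)}}$ times a volume factor absorbed by $B_{R^\prime}$. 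You instead peel off $u_M^{\beta}$ first at the exponent $\tilde p(N+\beta)/\beta$, which produces the target norm immediately and dispenses with that last conversion, at the price of a $\beta$-dependent conjugate exponent $\tau'$; the uniformity in $\beta$ that you then verify ($\tau'\le \tilde p/(\tilde p-1)\le 2N/(2N-1)$, hence a single admissible $\beta_0\le N\,4^{N/(N-1)}$ in Lemma \ref{lemf}) is precisely the issue the paper's fixed exponents sidestep. The one detail you gloss over is that the constant of the embedding $W^{1,N}(\mathbb{R}^N)\hookrightarrow L^{2p\tau'}(\mathbb{R}^N)$ also varies with $\beta$; since $2p\tau'$ stays in the compact interval $[2p,\,2p\tilde p/(\tilde p-1)]$ this is harmless (take the supremum of the embedding constants, or H\"older down to the fixed endpoint exponent at the cost of another factor dominated by $B_{R^\prime}$), but it deserves a word since the lemma is iterated over an unbounded sequence of $\beta$'s.
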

\begin{proof} 
	According to H\"older's inequality, we easily show \eqref{eq:element-2}.
	So, we will prove only \eqref{eq:element-1}.
	By Young's inequality, Lemmas \ref{lemf}, \ref{lemma2}, and inequality \eqref{tm}, we obtain
		\[
		\begin{array}{l}
		\int_{B(x_0,R^\prime)} f(u)uu_M^{\beta}\eta^{N}dx\, \\ 
		\le C
		\int_{B(x_0,R^\prime)} |u|^{p}u_M^{\beta}\phi_N(\alpha|u|^{\frac{N}{N-1}})dx\,  
		\\
		\le C
		\left(\int_{B(x_0,R^\prime)} |u|^{Np}u_M^{N\beta}dx\right)^{1/N}\left(\int_{B(x_0,R^\prime)}[\phi_N(\alpha|u|^{\frac{N}{N-1}})]^{N'}dx\right)^{1/N'}\,\\
		\le C
		\left(\int_{B(x_0,R^\prime)} |u|^{2Np}dx\right)^{1/2N}\left(\int_{B(x_0,R^\prime)}u_M^{2N\beta}dx\right)^{1/2N}C(\alpha,N)\,\\
		\le C
		\|u\|^p_{L^{2Np}(B(x_0,R^\prime))}\|u\|^{\beta}_{L^{2N\beta}(B(x_0,R^\prime))}\,\\
		\le C(\varrho)
		\|u\|^{\beta}_{L^{2N\beta}(B(x_0,R^\prime))}\,,
	\end{array}
	\]
because $\|u\|_{L^{4p}(B(x_0,R^\prime))} \le C\|\tilde{u}\|_{W^{1,N}(\mathbb{R}^N)}\leq C \varrho$. 

Since $\tilde{p}(N+\beta)>2N\beta$, by H\"older's inequality we obtain
\[
\left(\int_{B(x_0,R^\prime)}u^{2N\beta}dx\right)^{1/2N}\le \|u\|^{\beta}_{L^{\tilde{p}(N+\beta)}(B(x_0,R^\prime))}|B(x_0,R^\prime)|^{\frac{\tilde{p}(N+\beta)-2N\beta}{2N\tilde{p}(N+\beta)}},
\]
that conclude the inequality \eqref{eq:element-1}.
\end{proof}

\begin{lemma}\label{lem:bdd}
	Let $x_0\in\mathbb{R}^N$, $R^\prime>R>0$, such that $B(x_0,R')\subset D$, $\gamma_1=\frac{N}{N-q}>1$ and $\gamma_1^\prime=\frac{N}{q}$.
	Assume that $2N\le \tilde{p}$.
	As $u\in L^{\tilde{p}(N+\beta)}(B(x_0,R^\prime))$ with $\beta\ge 0$, then
	\begin{eqnarray}\label{eq:iteration}
	\lefteqn{\|u\|_{L^{\frac{\overline{p}^*}{N}(N+\beta)}(B(x_0,R))}^{N+\beta} }\\ &&
	\le 2^N(N+\beta)^NC_*^NB_{R^\prime}(C_{R^\prime}+D_{R,R^\prime})
	\max\{1,\|u\|_{L^{\tilde{p}(N+\beta)}(B(x_0,R^\prime))}\}^{N+\beta} \nonumber
	\end{eqnarray}
	holds with
	$$\begin{array}{l}
	B_{R^\prime} :=1+|B(0,R^\prime)|, \\
	C_{R^\prime} :=
	C(\varrho)+\lambda^*\|a\|_{L^{\gamma_1}(B(x_0,R^\prime))},
	\\
	D_{R,R^\prime} := \frac{N^N2^{2N-1}+2^{N-1}}{(R^\prime-R)^N},
	\end{array}
	$$
	where $C_*$ is the positive constant embedding from
	$W^{1,N}(\mathbb{R}^N)$ to $L^{\overline{p}^*}(\mathbb{R}^N)$.
\end{lemma}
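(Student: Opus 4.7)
The plan is to run a Moser-type iteration. I would test the weak form of $(PD)$ with $\phi = u u_M^\beta \eta^N$, absorb the resulting gradient cross-term using Young's inequality, control the nonlinear contributions via Lemma \ref{lem:element}, and finally convert the resulting $W^{1,N}$-estimate on the auxiliary function $w := u u_M^{\beta/N}\eta$ into an $L^{\overline{p}^*(N+\beta)/N}$-estimate on $u$ by the Sobolev embedding.

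Expanding $\nabla\phi$ and using $\nabla u_M = \nabla u\,\mathbf{1}_{\{u<M\}}$, the contribution $\beta u u_M^{\beta-1}\eta^N|\nabla u|^{N-2}\nabla u\cdot\nabla u_M = \beta|\nabla u|^N u_M^\beta \eta^N\mathbf{1}_{\{u<M\}}$ to $|\nabla u|^{N-2}\nabla u\cdot\nabla\phi$ is nonnegative and can be discarded. A single application of Young's inequality to the remaining cross-term yields, after testing $(PD)$ against $\phi$,
\[
\tfrac{1}{2}\int|\nabla u|^N u_M^\beta\eta^N\,dx + \int u^N u_M^\beta\eta^N\,dx \leq c_N\int u^N u_M^\beta|\nabla\eta|^N\,dx + \lambda\int a(x) u^q u_M^\beta\eta^N\,dx + \int f(u)u u_M^\beta\eta^N\,dx,
\]
with $c_N$ depending only on $N$. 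In parallel, a direct computation of $\nabla w$ gives
\[
|\nabla w|^N \leq 2^{N-1}(1+\beta/N)^N|\nabla u|^N u_M^\beta\eta^N + 2^{N-1} u^N u_M^\beta|\nabla\eta|^N, \qquad |w|^N = u^N u_M^\beta\eta^N.
\]
Integrating and substituting the previous estimate then bounds $\|w\|_{W^{1,N}(\mathbb{R}^N)}^N$ by a multiple of $(N+\beta)^N$ times the same three integrals on the right. For the $f(u)u$ and $a(x)u^q$ terms, Lemma \ref{lem:element} provides the required bound in terms of $\|u\|_{L^{\tilde p(N+\beta)}(B(x_0,R'))}$ and $B_{R'}$; for $\int u^N u_M^\beta|\nabla\eta|^N$ I would use $u_M\leq u$, $|\nabla\eta|\leq 2/(R'-R)$, and the H\"older bound $|B(x_0,R')|^{1-1/\tilde p}\leq B_{R'}$ to produce the factor $D_{R,R'}B_{R'}\|u\|_{L^{\tilde p(N+\beta)}}^{N+\beta}$. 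Since $\lambda<\lambda^*$, the coefficient $\lambda\|a\|_{L^{\gamma_1}}$ is absorbed into $\lambda^*\|a\|_{L^{\gamma_1}}$, giving $C_{R'}$.

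To conclude, I would invoke the Sobolev embedding $W^{1,N}(\mathbb{R}^N)\hookrightarrow L^{\overline{p}^*}(\mathbb{R}^N)$ (valid since $\overline{p}^*>N$ and since the zero-extension of $w$ lies in $W^{1,N}(\mathbb{R}^N)$, as $\mathrm{supp}(w)\subset B(x_0,R')\subset D$). On the inner ball $B(x_0,R)$ we have $\eta\equiv 1$, so $|w|^{\overline{p}^*} = u^{\overline{p}^*} u_M^{\beta\overline{p}^*/N}$ there; letting $M\to\infty$ and applying Fatou's lemma replaces $u_M$ by $u$ and yields precisely the left-hand side of \eqref{eq:iteration}. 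The three exponents $\beta$, $q+\beta$, $N+\beta$ appearing on the right are unified through the bound $\max\{1,\|u\|_{L^{\tilde p(N+\beta)}(B(x_0,R'))}\}^{N+\beta}$. I expect the main obstacle to be the careful bookkeeping of the $\beta$-dependence in the gradient estimate for $w$: the explicit $(N+\beta)^N$ factor is what drives the subsequent Moser iteration, so any slackness in the Young's-inequality constant would compromise the final iterate bound.
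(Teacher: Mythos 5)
Your proposal is correct and follows essentially the same route as the paper: test with $uu_M^\beta\eta^N$, bound the nonlinear terms via Lemma \ref{lem:element}, absorb the gradient cross-term by Young's inequality, pass to the Sobolev embedding $W^{1,N}\hookrightarrow L^{\overline{p}^*}$ for the auxiliary function, and conclude with Fatou as $M\to\infty$. The only (immaterial) difference is your choice of auxiliary function $uu_M^{\beta/N}\eta$ in place of the paper's $u_M^{1+\beta/N}\eta$, together with the unresolved but acknowledged bookkeeping of the explicit constants in $D_{R,R'}$.
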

\begin{proof} 
	Taking $uu_M^{\beta}\eta^{N}\in W^{1,N}_0(B(x_0,R^\prime))$ (for $M>0$)
	as test function in \eqref{est7}, where
	$\eta(x)=\eta_{R,R^\prime}(|x-x_0|)$, and
	by Lemma~\ref{lem:element},
	we obtain
	\begin{eqnarray}
	\lefteqn{C(\varrho)
		\|u\|^{\beta}_{L^{\tilde{p}(N+\beta)}(B(x_0,R^\prime))}B_{R^\prime} }
	\nonumber \\ &&
	+\lambda\|a\|_{L^{\gamma_1}(B(x_0,R^\prime))}
	\|u\|_{L^{\tilde{p}(N+\beta)}(B(x_0,R^\prime))}^{q+\beta}
	B_{R^\prime}
	\nonumber \\
		&\ge&
	\int_{B(x_0,R^\prime)} |\nabla u|^Nu_M^\beta\eta^{N}dx\,  
	+\int_{B(x_0,R^\prime)}u_M^{N+\beta}\eta^{N}dx\, \label{eq:bdd-1.1} 
	\\
	&&-\frac{2N}{R^\prime-R}
	\int_{B(x_0,R^\prime)} |\nabla u|^{N-1}
	u_M^\beta u\eta^{N-1}dx,\nonumber
	\end{eqnarray}
	where we use $|\nabla \eta|\le 2/(R^\prime-R)$.
	From Young's inequality and H\"older's inequality, we obtain
	\begin{eqnarray}
	\lefteqn{\frac{2N}{R^\prime-R}
		\int_{B(x_0,R^\prime)} |\nabla u|^{N-1}
		u_M^\beta u\eta^{N-1}dx\,  }
	\nonumber \\
	&\le& \frac{1}{2}\int_{B(x_0,R^\prime)} |\nabla u|^N
	u_M^\beta\eta^{N}dx\,  
	+\frac{2^NN^N2^{N-1}}{(R^\prime-R)^N}
	\int_{B(x_0,R^\prime)}u^{N+\beta}dx\,  
	\nonumber \\
	&\le& \frac{1}{2}\int_{B(x_0,R^\prime)} |\nabla u|^N
	u_M^\beta\eta^{N}dx\,   +\frac{N^N2^{2N-1}}{(R^\prime-R)^N}
	\|u\|_{L^{\tilde{p}(N+\beta)}(B(x_0,R^\prime))}^{N+\beta}
	B_{R^\prime}.
	\label{eq:bdd-2}
	\end{eqnarray}
	Thus, from \eqref{eq:bdd-1.1} and \eqref{eq:bdd-2} we have
	\begin{eqnarray}
	\lefteqn{B_{R^\prime}\left(C_{R^\prime}
		+\frac{N^N2^{2N-1}}{(R^\prime-R)^N}\right)
		\max\{1,\|u\|_{L^{\tilde{p}(N+\beta)}(B(x_0,R^\prime))}\}^{N+\beta} }
	\nonumber\\
	&\ge&
	\frac{1}{2}\int_{B(x_0,R^\prime)} |\nabla u|^Nu_M^\beta\eta^{N}dx\,  
	+\int_{B(x_0,R^\prime)}u_M^{N+\beta}\eta^{N}dx\,  .
	\label{eq:bdd-3}
	\end{eqnarray}
	Moreover, by using
	\begin{eqnarray*}
		\lefteqn{\|\nabla (u_M^{1+\beta/N}\eta)\|_{L^N(\mathbb{R}^N)}^N
			\le 2^{N-1}\left\{\|\eta \nabla (u_M^{1+\beta/N})\|_{L^N(\mathbb{R}^N)}^N
			+\|u_M^{1+\beta/N}\nabla \eta\|_{L^N(\mathbb{R}^N)}^N \right\} }
		\\
		&\le& 2^{N-1}\left(1+\frac{\beta}{N}\right)^N
		\int_{B(x_0,R^\prime)} |\nabla u|^Nu_M^\beta\eta^{N}dx\,  
		+\frac{2^{2N-1}}{(R^\prime-R)^N}
		\int_{B(x_0,R^\prime)}u_M^{N+\beta}dx\,  
	\end{eqnarray*}
	and H\"older's inequality, due to the embedding from
	$W^{1,N}(\mathbb{R}^N)$ to $L^{\overline{p}^*}(\mathbb{R}^N)$, we have 
	\begin{equation}
	\label{eq:bdd-4}
	\begin{array}{l}
	\displaystyle \frac{1}{2}\int_{B(x_0,R^\prime)} |\nabla u|^Nu_M^\beta\eta^{N}dx\,  
	+\int_{B(x_0,R^\prime)}u_M^{N+\beta}\eta^{N}dx\,        \\  \\
	\ge 2^{-N}N^N(N+\beta)^{-N}
	\left\{\|\nabla (u_M^{1+\beta/N}\eta)\|_{L^N(\mathbb{R}^N)}^N
	+\|u_M^{1+\beta/N}\eta\|_{L^N(\mathbb{R}^N)}^N \right\}
	 \\  \\
	\qquad -\frac{2^{N-1}N^N}{(N+\beta)^{N}(R^\prime-R)^N}
	\int_{B(x_0,R^\prime)}u_M^{N+\beta}dx\,  
	 \\ \\
	\ge 2^{-N}N^N(N+\beta)^{-N}
	\|u_M^{1+\beta/N}\eta\|_{W^{1,N}(\mathbb{R}^N)}^N \\  \\
	\qquad -\frac{2^{N-1}}{(R^\prime-R)^N}
	\|u\|_{L^{\tilde{p}(N+\beta)}(B(x_0,R^\prime))}^{N+\beta}(1+|B(0,R^\prime)|)
	 \\ \\
	\ge  2^{-N}N^N(N+\beta)^{-N}C_*^{-N}
	\|u_M^{1+\beta/N}\eta\|_{L^{\overline{p}^*}(\mathbb{R}^N)}^N
\\ \qquad	-\frac{2^{N-1}}{(R^\prime-R)^N}
	\|u\|_{L^{\tilde{p}(N+\beta)}(B(x_0,R^\prime))}^{N+\beta}B_{R^\prime}
	\\ \\
	\ge 2^{-N}N^N(N+\beta)^{-N}C_*^{-N}
	\|u_M\|_{L^{\overline{p}^*(N+\beta)/N}(B(x_0,R))}^{N+\beta}
	\\  \\
	\qquad -\frac{2^{N-1}}{(R^\prime-R)^N}
	\|u\|_{L^{\tilde{p}(N+\beta)}(B(x_0,R^\prime))}^{N+\beta} B_{R^\prime}.
		\end{array}
		\end{equation}
	Consequently, it follows from \eqref{eq:bdd-3} and \eqref{eq:bdd-4}  that
	\begin{eqnarray}
	\lefteqn{2^{-N}N^N(N+\beta)^{-N}C_*^{-N}
		\|u_M\|_{L^{\overline{p}^*(N+\beta)/N}(B(x_0,R))}^{N+\beta}}
	\nonumber \\
	&\le& B_{R^\prime}(C_{R^\prime}+D_{R,R^\prime})
	\max\{1,\|u\|_{L^{\tilde{p}(N+\beta)}(B(x_0,R^\prime))}\}^{N+\beta}.
	\label{eq:bdd-5}
	\end{eqnarray}
	The conclusion follows by applying Fatou's lemma and letting $M\to\infty$ in \eqref{eq:bdd-5}.
\end{proof}

\begin{proposition}\label{prop:bdd-2}
Assume the assumptions of Lemma \ref{lem:bdd}. Let us suppose $x_0 \in D$ and $a$ a positive function such that $a\in L^{\frac{N}{N-q}}(\mathbb{R}^N)$, $f$ satisfying \eqref{growth}, $R_*>0$ such that $B(x_0,2R_*) \subset D$. If $u  \in W^{1,N}(D)$ is a solution of ($PD$), then 
	$u\in L^\infty(D)$.  Furthermore, $\|u\|_{W^{1,N}(D)}\leq C$ implies $\|u\|_{L^{\infty}(D)}\leq \tilde{C}$.
\end{proposition}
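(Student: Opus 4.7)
The plan is to run a Moser-type iteration based on the reverse-H\"older inequality \eqref{eq:iteration} from Lemma \ref{lem:bdd}. I fix $\tilde p := 2N$ (the smallest admissible value since $2N \le \tilde p$ and $\gamma_1' \le \tilde p$), which produces the gain factor
\[
\sigma := \frac{\overline p^*}{N\tilde p} = \frac{\overline p^*}{2N^2} > 1,
\]
exactly where the standing assumption $\overline p^* > 2N^2$ is used. Then I define the exponent and radius sequences
\[
N+\beta_k := N\sigma^k, \qquad R_k := R_*\bigl(1+2^{-k}\bigr), \quad k=0,1,2,\dots,
\]
so that $R_0 = 2R_*$, $R_k \searrow R_*$, $R_k - R_{k+1} = R_*/2^{k+1}$, and, crucially,
\[
\tilde p(N+\beta_{k+1}) \;=\; \frac{\overline p^*}{N}(N+\beta_k).
\]
This last identity makes the exponent appearing on the left-hand side of \eqref{eq:iteration} at level $k$ match the one on its right-hand side at level $k+1$, which is what fuels the iteration.

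Since $u \in W^{1,N}_0(D)$, the zero extension $\tilde u$ lies in $W^{1,N}(\mathbb R^N)$ and embeds into $L^{\tilde p N}(\mathbb R^N)$, giving the base estimate
\[
\|u\|_{L^{\tilde p N}(B(x_0,R_0))} \;\le\; C\,\|u\|_{W^{1,N}(D)}.
\]
Applying Lemma \ref{lem:bdd} with $\beta=\beta_k$, $R=R_{k+1}$, $R'=R_k$ and extracting $(N+\beta_k)$-th roots yields
\[
\|u\|_{L^{\tilde p(N+\beta_{k+1})}(B(x_0,R_{k+1}))} \;\le\; A_k^{1/(N+\beta_k)}\,\max\bigl\{1,\,\|u\|_{L^{\tilde p(N+\beta_k)}(B(x_0,R_k))}\bigr\},
\]
where, bounding $B_{R_k}\le 1+|B(0,2R_*)|$, $\|a\|_{L^{\gamma_1}(B(x_0,R_k))}\le \|a\|_{L^{\gamma_1}(\mathbb R^N)}$ and $D_{R_{k+1},R_k}\lesssim R_*^{-N}2^{Nk}$, one gets $A_k \le C_1\,\max\{\sigma^{Nk},2^{Nk}\}$ with $C_1$ depending only on $N$, $R_*$, $\|a\|_{L^{\gamma_1}(\mathbb R^N)}$, $\lambda^*$ and $\varrho$.

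Iterating from $k=0$ to $K$ produces
\[
\|u\|_{L^{\tilde p(N+\beta_{K+1})}(B(x_0,R_{K+1}))} \;\le\; \Bigl(\prod_{k=0}^K A_k^{1/(N+\beta_k)}\Bigr)\,\max\bigl\{1,\|u\|_{L^{\tilde p N}(B(x_0,2R_*))}\bigr\}.
\]
Passing to logarithms, the infinite product converges because
\[
\sum_{k=0}^\infty \frac{\log A_k}{N\sigma^k} \;\le\; C_2\sum_{k=0}^\infty \frac{k+1}{\sigma^k} \;<\;\infty,
\]
as $\sigma>1$. Letting $K\to\infty$ and using $\tilde p(N+\beta_{K+1})\to\infty$, I obtain $\|u\|_{L^\infty(B(x_0,R_*))}\le \tilde C$ where $\tilde C$ depends only on $\|u\|_{W^{1,N}(D)}$ and on the fixed data $N,R_*,\|a\|_{L^{\gamma_1}(\mathbb R^N)},\lambda^*,\varrho$, uniformly in the basepoint $x_0$.

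To finish, I cover $D$ by finitely many balls of radius $R_*$ whose doubles lie in $D$, together with a boundary collar on which the $C^{1,\beta}(\overline D)$ regularity already granted by Theorem \ref{TP2} yields a direct pointwise bound (and $u$ vanishes on $\partial D$). Combining these gives $\|u\|_{L^\infty(D)}\le \tilde C$. I expect the main technical obstacle to be keeping the iteration constant genuinely uniform in $x_0$ and in $D$, because this same estimate must survive the limit $D\nearrow \mathbb R^N$ carried out later in Section \ref{S10}; the fix is the one adopted above, namely replacing every occurrence of $\|a\|_{L^{\gamma_1}(B(x_0,R_k))}$ by $\|a\|_{L^{\gamma_1}(\mathbb R^N)}$ and fixing $R_*$ once and for all, so that neither $C_1$ nor the infinite product depend on the specific bounded domain $D$.
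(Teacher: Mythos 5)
Your proposal follows essentially the same Moser-iteration route as the paper's proof: the same key estimate from Lemma \ref{lem:bdd}, the same shrinking radii $R_k=(1+2^{-k})R_*$, the same matching of exponents $\tilde p(N+\beta_{k+1})=\tfrac{\overline p^*}{N}(N+\beta_k)$, and the same convergent sums/products controlling the iteration constants; the only (harmless) parameter difference is your choice $\tilde p=2N$, $\beta_0=0$, where the paper takes $2N<\tilde p<\overline p^*/N$ and $\beta_0=\overline p^*/\tilde p-N>0$, both admissible for the lemma. Your explicit covering-plus-boundary-collar step at the end is in fact slightly more careful than the paper, which passes from the interior bound on $B(x_0,R_*)$ to boundedness on all of $D$ simply by declaring $x_0$ arbitrary.
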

\begin{proof}
	
	Since $2N<\overline{p}^*/N$,
	we  can choose $\tilde{p}$ such that
	$$
		2N<\tilde{p}<\frac{\overline{p}^*}{N}.
	$$

	Let $R_*>0$ satisfying $B(x_0,2R_*) \subset D$.
	Put
	$$
	A:=\lambda^*\|a\|_{L^{\gamma_1}(\mathbb{R}^N)}.
	$$
	Define sequences $\{\beta_n\}$, $\{R_n^\prime\}$ and $\{R_n\}$ by
	\begin{eqnarray*}
		\lefteqn{\beta_0:=\frac{\overline{p}^*}{\tilde{p}}-N>0,
			\quad \tilde{p}(N+\beta_{n+1})=\frac{\overline{p}^*}{N}
			(N+\beta_n),}
		\\
		&& R_n^\prime:=(1+2^{-n})R_*, \quad R_n:=R_{n+1}^\prime.
	\end{eqnarray*}
	Since $u\in W^{1,N}(D)$,
	by using the embedding of $W^{1,N}(D)$ to
	$L^{\overline{p}^*}(D)$,
	we see that
	$u\in L^{\overline{p}^*}(D)=L^{\tilde{p}(N+\beta_0)}(D)$.
	
	Let $x_0\in D$ fixed.
	 Lemma~\ref{lem:bdd} guarantees that
	if $u\in L^{\tilde{p}(N+\beta_n)}(B(x_0,R_n^\prime))$, then
	$u\in L^{\frac{\overline{p}^*}{N}(N+\beta_n)}(B(x_0,R_n))
	=L^{\tilde{p}(N+\beta_{n+1})}(B(x_0,R_{n+1}^\prime))$.
	Notice that
	$$
	\begin{array}{ll}
	B_{R^\prime_n} &\le (1+|B(0,2R_*)|)=:B_0, \\
	C_{R_n^\prime} &\le C(\varrho)+A+1=:C_0, \\
	D_{R_n,R_n^\prime} &= \frac{N^N2^{2N-1}+2^{N-1}}{R_*^N}2^{N(n+1)}=C^\prime 2^{N(n+1)}=:D_n
	\end{array}
	$$
	for any $n\ge 0$ with $C^\prime$ independent of $n$.
	By setting $$b_n:=\max\{1,\|u\|_{L^{\tilde{p}(N+\beta_n)(B(x_0,R_n^\prime))}}\},$$
and	by Lemma~\ref{lem:bdd} we obtain
	\begin{equation}\label{eq:bdd-2-1}
	b_{n+1}\le C^{\frac{1}{N+\beta_n}}(N+\beta_n)^{\frac{N}{N+\beta_n}}
	(C_0+D_n)^{\frac{1}{N+\beta_n}}b_n
	\end{equation}
	for every $n\ge0$
	with $\ C:=2^N(C_*+1)^NB_0$. Put $P:=\tilde{p}N/\overline{p}^*<1$.
	Then,  $N+\beta_{n+1}=(N+\beta_n)/P$,
	$\beta_{n+1}>\beta_n/P>\beta_0(1/P)^{n+1}\to\infty$ as $n\to\infty$.
	Moreover, we see that
	\begin{align*}
	S_1 &:=\sum_{n=0}^\infty\frac{1}{N+\beta_n}=
	\frac{1}{N+\beta_0}\sum_{n=0}^\infty P^n
	=\frac{1}{(N+\beta_0)(1-P)}<\infty,
	\\
	S_2 &:=\mbox{ln} \prod_{n=0}^\infty (N+\beta_n)^{\frac{N}{N+\beta_n}}
	=\frac{N}{N+\beta_0}\sum_{n=0}^\infty P^n
	\left(\mbox{ln} (N+\beta_0)+n\mbox{ln} P^{-1}\right)<\infty
	\end{align*}
	and
	\begin{align*}
	S_3&:=\mbox{ln} \prod_{n=0}^\infty(C_0+D_n)^{\frac{1}{N+\beta_n}}
	=\sum_{n=0}^\infty \frac{P^{n}}{N+\beta_0} \mbox{ln} (C_0+D_n)
	\\
	&\le \sum_{n=0}^\infty\frac{P^{n}}{N+\beta_0}
	N(n+1)\mbox{ln} (C_0+C^\prime)2<\infty.
	\end{align*}
	As a result, by iteration in \eqref{eq:bdd-2-1}  and
	$\tilde{p}(N+\beta_0)=\overline{p}^*$, we obtain
	$$
	\|u\|_{L^{\frac{\overline{p}^*}{N}(N+\beta_n)}(B(x_0,R_*))}
	\le b_n \le C^{S_1}e^{S_2}e^{S_3}
	\max\{1,\|u\|_{L^{\overline{p}^*}(B(x_0,2R_*))}\}
	$$
	for every $n\ge 1$.
	Letting $n\to\infty$, this ensures that
	\begin{equation}\label{eq:bdd-2-10}
	\|u\|_{L^{\infty}(B(x_0,R_*))}
	\le C^{S_1}e^{S_2}e^{S_3}
	\max\{1,\|u\|_{L^{\overline{p}^*}(B(x_0,2R_*))}\}.
	\end{equation}
	By using the embedding of $W^{1,N}(D)$ to
	$L^{\overline{p}^*}(D)$,
	\eqref{eq:bdd-2-10} yields that
	\begin{equation}\label{z7}
	   \begin{array}{ll}
	\|u\|_{L^\infty(B(x_0,R_*))}
	&\le C^{S_1}e^{S_2}e^{S_3}
	\max\{1,\|u\|_{L^{\overline{p}^*}(D)}\}
	\\
	&\le C^{S_1}e^{S_2}e^{S_3}
	\max\{1,C_*\|u\|_{W^{1,N}(D)}\}
	\end{array}
	\end{equation}
whence $u$ is bounded in $D$ because
	$x_0\in D$ is arbitrary and
	the constant $C^{S_1}e^{pS_2}e^{S_3}$ is independent of $x_0$.
\end{proof}


\subsection{Proof of Theorem \ref{TP}}\label{S4}

In this section, we denote $B_n:=B_n(0)$ the open ball centered at the origin with radius $n$. Throughout this section, we will consider $\lambda \in(0,\lambda ^*)$ fixed. The space $W^{1,N}(B_n)$ is endowed with the norm
$$
\|u\|_{N,n}^N :=\int_{B_n}  \left(|\nabla u|^{N}+|u|^{N}\right)dx
\,  .
$$

By applying Theorem~\ref{TP2} with $D=B_n$ ($n\in\mathbb{N}$), we
obtain a solution $u_n\in W_0^{1,N}(B_n)$ of the
problem

$$\left\{
\begin{array}{lll}
-\Delta_N u+|u|^{N-2}u=\lambda a(x)|u|^{q-2}u+f(u) &\mbox{in}&   B_n\nonumber\\
u>0&\mbox{in}&   B_n\nonumber\\
u(x)= 0 &\mbox{on}&\partial B_n.\nonumber
\end{array} \right.\leqno(P_n)
$$

Again, \eqref{l11} and \eqref{est10} shows
the boundedness of $\{u_n\}_{n>0}$
in $W_0^{1,N}(D)$. That is,
\begin{equation}
\|u_n\|_{N,n}\  \le  \tilde{\varrho}\quad \mbox{for all}\quad n\in\mathbb{N}, 
\label{eq:apriori-p}
\end{equation}
where $$\tilde{\varrho}:=\min \left\{(2 \lambda C_1)^{1/(N-q)}, \varrho \right\}$$  is independent of $B_n$. 

 If $n\geq m+1$, notice that
\begin{equation}\label{z2}
\begin{array}{lll}
    \int_{B_{m+1}}(|\nabla u_n|^{N-2}\nabla u_n \nabla\varphi+|u_n|^{N-2}u_n\varphi-\lambda a(x)|u_n|^{q-2}u_n\varphi-f(u_n)\varphi)dx=0,
\end{array}
\end{equation}
for all $\varphi\in C^{\infty}_0(B_{m+1})$.
By \eqref{eq:apriori-p}  we
obtain
\begin{equation}\label{20}
\|u_n\|_{N,m+1}\leq \|u_n\|_{N,n}\le \tilde{\varrho}.
\end{equation}
\noindent Therefore, there exists $u_{\lambda}\in W^{1,N}(B_{m+1})$ such that
\begin{eqnarray}\label{z1}
u_{n} &\rightharpoonup& u_{\lambda} \quad \,\,\, \mbox{in} \,\, W^{1,N}(B_{m+1}),
\label{eq:main-1}\\
u_n&\to &u_{\lambda} \quad  {\rm in}\ L^s(B_{m+1}),\, s\geq N,
\label{eq:main-2}\\
u_{n}(x)&\rightarrow& u_{\lambda}(x)\quad \,\,\,  \mbox{ a.e. } x\in B_{m+1},
\end{eqnarray}
as $n\to\infty$.

By inequality \eqref{z7} in Proposition \ref{prop:bdd-2} and \eqref{20}, we infer that
\begin{equation}\label{z3}
	\begin{array}{ll}
	\|u_n\|_{L^\infty(B_{m+1})} 
	&\leq  C^{S_1}e^{S_2}e^{S_3}
	\max\{1,C_*\tilde{\varrho}\}=:\Theta.
	\end{array}
    \end{equation}
Now we use regularity result up to the boundary due to Lieberman \cite[Theorem 1.7]{L}, to conclude from \eqref{z3} that 
\begin{equation}\label{z4}
    \|u_n\|_{C^{1,\beta}(\overline{B_m})}\leq\vartheta, 
\end{equation}
where $\beta\in (0,1)$ and $\vartheta$ is independent of $n$.
Thus, using \eqref{z4} and Arzel\`a-Ascoli theorem, we conclude that \begin{equation}\label{j1}
    u_\lambda\in C^{1,\alpha}(\overline{B_m}) \mbox{ for some }\alpha\in(0,\beta).
\end{equation}
We also have
\begin{equation}\label{eqzx1} 
    \begin{array}{l}
f(u_{n}) \to f(u_{\lambda}) \,\,\, \mbox{ weakly in } \,\,\, L^{N'}(B_{m+1}),
   \end{array}
\end{equation}
as $n\to\infty$. Indeed, notice that

	\begin{equation}\begin{array}{rll}
\int_{B_{m+1}} |f(u_n)|^{N'}dx\leq\int_{B_{m+1}}(u_n^{(p-1)}\phi_N(2^{\frac{N}{N-1}}\alpha |u_n|^{\frac{N}{N-1}}))^{N'} dx&\leq&\\
	 \Theta^{(p-1)N'}\int_{B_{m+1}}(\phi_N(N2^{\frac{2N}{N-1}}\alpha \|u_n\|^{{\frac{N}{N-1}}}_{W^{1,N}(B_n}|\frac{u_n}{\|u_n\|_{W^{1,N}(B_n)}}|^{\frac{N}{N-1}})) dx&\leq&\\
	  \Theta^{(p-1)N'}C(\alpha,N)^{N'}&<&\infty.
		\end{array}
	\end{equation}
Since
\[
u_n \rightarrow u_{\lambda}\,\, \mbox{a.e. in}\,\, B_{m+1},
\]
 by the continuity of $f$ we have
\begin{equation}\label{eq26.2zla}
f(u_n(\cdot)) \rightarrow f(u_{\lambda}(\cdot))\,\, \mbox{a.e. in}\,\, B_{m+1}.
\end{equation}
Hence, \cite[Theorem 13.44]{HS} leads to 
 \begin{equation}\label{eq16.7lza}
f(u_{n}) \to f(u_{\lambda}) \,\,\, \mbox{ weakly in } \,\,\, L^{N'}(B_{m+1}).
\end{equation}


Let us show that $u_n$ converges to $u_{\lambda}$ strongly in $W
^{1,N}(B_{m})$. To this end, fix $l\in\mathbb{N}$
and choose a smooth function $\psi_l$ satisfying $0\le \psi_l \le 1$,
$\psi_l(r)=1$ if $r\le m$ and $\psi_l(r)=0$ if $r\ge m+1/l$. Setting
$\eta_l(x):=\psi_l(|x|)$, we note that $(u_n-u_{\lambda})\eta_l\in
W_0^{1,N}(B_{m+1})\subset W_0^{1,N}(B_n)$ for any $n\ge m+1$.
Denote

$$
V_n=\int_{B_m} |\nabla u_n|^{N-2}\nabla u_n (\nabla u_n-\nabla u_{\lambda})\,dx.
$$
Using $(u_n-u_{\lambda})\eta_l$ as test function in $\eqref{z2}$ and invoking the
growth condition \eqref{growth}, we obtain
\begin{eqnarray*}
V_n
&=& \int_{|x|<m+1/l}\left(\lambda a(x)u_n^{q-1}+f(u_n)-u_n^{N-1}
\right) (u_n-u_{\lambda})\eta_l\,dx
\\
&&
   -\int_{m\le |x|<m+1/l} |\nabla u_n|^{N-2}\nabla u_n \nabla (u_n- u_{\lambda})\eta_ldx
\\
&&
   -\int_{m\le |x|<m+1/l} |\nabla u_n|^{N-2}\nabla u_n \nabla
\eta_l (u_n-u_{\lambda}) dx
\\
&\le& \int_{B_{m+1}} (\lambda a(x)u_n^{q-1}+u_n^{p-1}\phi_N(2^{\frac{N}{N-1}}\alpha |u_n|^{\frac{N}{N-1}})+u_n^{N-1})
|u_n-u_{\lambda}|\, dx
\\
& &  +\int_{m\le |x|<m+1/l}
|\nabla u_n|^{N-1}|\nabla u_{\lambda}|\,dx
\\ &&
 +d_l\int_{m\le |x|<m+1/l} |\nabla u_n|^{N-1}|u_n-u_{\lambda}|\,dx \\
&\equiv& I_n^1 +I_n^2 +I_n^3,
\end{eqnarray*}
where $d_l:=\sup_{|x|<m+1/l}|\nabla \eta_l(x)|$.

By H\"older's inequality and \eqref{eq:apriori-p} we have
\begin{eqnarray*}
I_n^1
&\le& \|u_n-u_{\lambda}\|_{L^N(B_{m+1})}
\left\{\lambda\|a\|_{L^{N/(N-q)}(B_{m+1})}\|u_n\|^q_{W^{1,N}(B_m+1)}\right.\\
&&\left. +\vartheta^{p-1}C(\alpha,N)
+\|u_n\|^{N-1}_{W^{1,N}(B_m+1)}
 \right\}
\\
&\le&  \overline{C}\|u_n-u_{\lambda}\|_{L^N(B_{m+1})},
\end{eqnarray*}
where $\overline{C}$ is a positive constant independent of $u_n$, $n$, $m$
and $l$. 

By H\"older's inequality, the following estimates
follow:
\begin{eqnarray*}
& I_n^2& \le \|u_n\|^{N-1}_{W^{1,N}(B_{m+1})}\left(\int_{m\le |x|<m+1/l}
|\nabla u_{\lambda}|^{N}\,dx\right)^{1/N},
\end{eqnarray*}

\begin{eqnarray*}
& I_n^3& \le d_l\|u_n-u_{\lambda}\|_{L^N(B_{m+1})}\|\nabla
u_n\|_{L^{N}(B_{m+1})}^{N-1}.
\end{eqnarray*}
Thereby, from \eqref{eq:apriori-p} and
\eqref{eq:main-2} we derive
\begin{eqnarray*}
\limsup_{n\to \infty} V_n \le
\tilde{\varrho}^{N-1}
\left(\int_{m\le |x|<m+1/l} |\nabla u_{\lambda}|^N\,dx\right)^{1/N},
\end{eqnarray*}
for all $l\in\mathbb{N}$. Thus, letting $l\to\infty$, we obtain
\begin{equation}\label{V1}
    \limsup_{n\to \infty} V_n\le 0.
\end{equation} As known from \eqref{eq:main-1},
$u_n$ weakly converges to $u_{\lambda}$ in $W^{1,N}(B_m)$,
so we may write
\begin{equation}\label{V2}
\begin{array}{lll}
V_n+o(1)&=&\int_{B_m}
\left(|\nabla u_n|^{N-2}\nabla u_n-|\nabla u_{\lambda}|^{N-2}\nabla u_{\lambda}\right)
(\nabla u_n-\nabla u_{\lambda})\,dx
\\
&\ge&
\left(\|\nabla u_n\|_{L^N(B_m)}^{N-1}-\|\nabla u_{\lambda}\|_{L^N(B_m)}^{N-1}\right)
\left(\|\nabla u_n\|_{L^N(B_m)}-\|\nabla u_{\lambda}\|_{L^N(B_m)}\right)
 \\
\\ & \ge& 0.
\end{array}
\end{equation}
By \eqref{V1} and \eqref{V2}, we obtain $\lim_{n\to \infty} V_n=0$, $\lim_{n\to
\infty}\|\nabla u_n\|_{L^N(B_m)}=\|\nabla u_{\lambda}\|_{L^N(B_m)}$. This implies that $u_n$ converges to $u_{\lambda}$
strongly in $W^{1,N}(B_{m})$  because the spaces
$W^{1,N}(B_{m})$ is uniformly convex.

Now, since $u_n>0$ in $B_m$, we infer that $u_{\lambda}$ is a nonnegative solution of the problem
$$
-\Delta_N u_{\lambda}+u^{N-1}_{\lambda}=\lambda a(x)u_{\lambda}^{q-1}+f(u_{\lambda}) \quad {\rm in}\ B_m,\quad u_{\lambda}\geq  0\quad
{\rm on}\ \partial  B_m.
$$

By using \eqref{utilde} and since $C_0^{\infty}(\mathbb{R}^N)$ is dense in $W^{1,N}(\mathbb{R}^N)$, by a diagonal argument, there exist a relabeled subsequence of $\{\tilde{u}_{n}\}$ and a function $u_{\lambda}\in
W^{1,N}(\mathbb{R}^N)$ such that
$$
\tilde{u}_{n} \rightharpoonup u_{\lambda} \,\,\, \mbox{in} \,\,\,
W^{1,N}(\mathbb{R}^N), 
$$
$$
\tilde{u}_n(x)\rightarrow u_{\lambda}(x)\,\,\,\mbox{for a.e } x\in \mathbb{R}^N.
$$
These convergence properties, and some iteration process, ensure that $u_{\lambda}$ is a solution of problem $\eqref{P}$, that belongs to $ C^1_{loc}(
\mathbb{R}^N)$ (see \eqref{j1}).

The next step in the proof is to show that $u_{\lambda}$ does not vanish in
$\mathbb{R}^N$. Indeed, Lemma~\ref{le2} provides a
solution $u_{\lambda,m}$ of the problem
$$\left\{
\begin{array}{lll}
-\Delta_N u+|u|^{N-2}u=\lambda a_m|u|^{q-2}u &\mbox{in}&   B_m\nonumber\\
u>0&\mbox{in}&   B_m\nonumber\\
u(x)= 0 &\mbox{on}&\partial B_m\nonumber,
\end{array} \right.
$$
where 
$$a_m=\inf_{B_m} a(x).$$
Since $\lambda a(x)t^{q-1} + f(t)\ge
\lambda a_mt^{q-1}$ for all $x\in \mathbb{R}^N$.
We are thus in a position to apply Proposition~\ref{teorsubsup} to the
functions $u_{\lambda,m}$ and $\tilde{u}_n$ with $n>m$, in place of $u_1=u_{\lambda,m}$ and
$u_2=\tilde{u}_n$, respectively, which renders $\tilde{u}_n\ge u_{\lambda,m}$ in $B_m$ for
every $n>m$. This enables us to deduce that $u_{\lambda}\geq u_{\lambda,m}$ in $B_m$, so
$u_{\lambda}(x)>0$ for  every $x\in \mathbb{R}^N$,  because $m$ was arbitrary chosen.

 Furthermore, since $\tilde{u}_n$ weakly converges to $u_{\lambda}$ in $W^{1,N}(\mathbb{R}^N)$, by \eqref{20} we have

\begin{equation}\label{rhotil}
\|u_{\lambda}\|_{W^{1,N}(\mathbb{R}^N)}\leq \liminf_{n\rightarrow\infty}\| \tilde{u}_n\|_{W^{1,N}(\mathbb{R}^N)}
= \liminf_{n\rightarrow\infty}\|\tilde{u}_n\|_{N,n}\leq \tilde{\varrho},
\end{equation}
and then, we obtain $u_{\lambda}\in
W^{1,N}(\mathbb{R}^N)$, and
\begin{equation}\label{m1}
    \|u_{\lambda}\|_{W^{1,N}(\mathbb{R}^N)} \rightarrow 0, \, \mbox{ as }\lambda\rightarrow 0. 
\end{equation}
Thus, the proof
of Theorem \ref{TP} is complete.
\subsection{Proof of Proposition \ref{prop-n}}
 In this proof, we borrowed some ideas from \cite{cgm}. 
For the sake of contradiction, suppose  that $\lambda^* = \infty$. Thus, there is a sequence $\lambda_n \to \infty$ and corresponding solutions $u_{\lambda_n} > 0$ in $\mathbb{R}^N$ given by Theorem \ref{TP}. In what follows,  define $a_R=\inf_{B_R(0)}a(r)$, for some  $R>0$ fixed.  Define also
 $$\mathcal{P}(x,t) = \lambda a(x) t^{q-1}+t^{p-1}\phi_N(\alpha|t|^{\frac{N}{N-1}})$$ and		$$
	P_1(t) = \Lambda t ^{q-1}+t^{p-1} \phi_N(\alpha|t|^{\frac{N}{N-1}}),
	$$
where $
\Lambda = \lambda \  a_R
$.	

	We claim that there exists a constant $C_{\Lambda} > 0$ such that
	\begin{equation*}
	P(x,t) \geq  C_{\Lambda} t^{N-1} \mbox{ for every }\  t>0.
	\end{equation*}
	Indeed, define the function $Q(t) = P_1(t) t^{-(N-1)}$. Then, since $q<N<p$, $Q(t) \to \infty$ as $t \to 0^+$ and as $t \to \infty$. The minimum value is $Q(t_1) = C_{\Lambda}$, where $t_1>0$ is the unique solution of
    $$
	\phi_N(\alpha t^{\frac{N}{N-1}})(p-N)t^{p-q}+\alpha \frac{N}{N-1}t^{\frac{(N-1)(p-q+1)+1}{N-1}}\phi_{N-1}(\alpha t^{\frac{N}{N-1}}) =
	\Lambda (N - q).
	$$
Indeed, let us define 
\[H(t):=\phi_N(\alpha t^{\frac{N}{N-1}})(p-N)t^{p-q}+\alpha \frac{N}{N-1}t^{\frac{(N-1)(p-q+1)+1}{N-1}}\phi_{N-1}(\alpha t^{\frac{N}{N-1}}).
\]
Note that $H(0)=0$ and $H(t) \to \infty$  as $t \to \infty$. Since 
\[
\begin{array}{rcl}
H^{\prime}(t)&=& (p-N)(p-q)t^{p-q-1}\phi_{N}(\alpha t^{\frac{N}{N-1}})\\
    &+& \frac{\alpha\,N(p-N)}{N-1}t^{p-q + \frac{1}{N}}\phi_{N-1}(\alpha t^{\frac{N}{N-1}})\\
    &+&\frac{N \alpha}{N-1}\left[\frac{(N-1)(p-q+1)+1}{N-1}\right]t^{\frac{(N-1)(p-q+1)+1}{N-1}-1}\phi_{N-1}(\alpha t^{\frac{N}{N-1}})\\
    &+&\frac{\alpha^2 N^2}{(N-1)^2}t^{\frac{(N-1)(p-q+1)+1}{N-1} + \frac{1}{N-1}}\phi_{N-2}(\alpha t^{\frac{N}{N-1}})>0, \,\,\, \forall t\geq 0,
\end{array}
\]
hence $H$ is increasing. Therefore, there is a unique point $t_1>0$ such that $H(t_1)=\Lambda(N-q)$. Consequently, by the arguments before, $t_1>0$ is the unique point of minimum of $Q$, with $Q(t_1)=C_{\Lambda}$.

Denote by $\sigma_1 > 0$ and $\varphi_1 > 0$, respectively, the principal eigenvalue and corresponding eigenfunction of the eigenvalue
problem 
$$
\left\{
\begin{array}{lll}
-\Delta_N (\varphi_1) = \sigma_1|\varphi_1|^{N-2}\varphi_1 &\mbox{in}&B_R(0) \\
\varphi_1 =0 & \mbox{on} & \partial B_R(0).
\end{array} \right.
$$
Since $C_{ \Lambda}$ increases as $\lambda_n$ increases, for $\delta > 0$ small enough, there is $\lambda_0$ in this sequence such that the corresponding constant satisfies $C_{ \Lambda_0} \geq \sigma_1 + \delta + 1$. Hence the solution $u_{\lambda_0}$ of \eqref{P} associated to $\lambda_0$ satisfies $u_{\lambda_0}>0$ in $\mathbb{R}^N$ and
$$
\left\{
\begin{array}{lll}
-\Delta_N u_{\lambda_0} \geq (C_{ \Lambda_0} - 1) |u_{\lambda_0}|^{N-2}u_{\lambda_0} \geq (\sigma_1 + \delta) |u_{\lambda_0}|^{N-2}u_{\lambda_0} &\mbox{in}&B_R(0) \\
u_{\lambda_0} \geq 0 & \mbox{on} & \partial B_R(0).
\end{array} \right.
$$
Otherwise, taking $\varepsilon>0$ small enough we obtain  $\varepsilon \varphi_1 < u_{\lambda_0}$ in $B_R(0)$ and
$$
\left\{
\begin{array}{lll}
-\Delta_N (\varepsilon \varphi_1) = \sigma_1|\varepsilon \varphi_1|^{N-2}(\varepsilon \varphi_1) \leq (\sigma_1 + \delta) |\varepsilon \varphi_1|^{N-2}(\varepsilon \varphi_1) &\mbox{in}&B_R(0) \\
\varphi_1 =0 & \mbox{on} & \partial B_R(0).
\end{array} \right.
$$
By the method of subsolution and supersolution, see \cite[Theorem 2.1]{cgm}, there is a solution $\varepsilon \varphi_1 \leq \omega \leq u_{\lambda_0}$ in $B_R(0)$ of
$$
\left\{
\begin{array}{lll}
-\Delta_N \omega = (\sigma_1 + \delta) |\omega|^{N-2}\omega &\mbox{in}&B_R(0) \\
\omega =0 & \mbox{on} & \partial B_R(0).
\end{array} \right.
$$
Hence there is a contradiction to the fact that $\sigma_1$ is isolated (see \cite{Lin}). Therefore, $\lambda^* < \infty$.

\subsection{Proof of Theorem~\ref{T:infinity}}\label{S5}


By using the same arguments as in the proof of Theorem \ref{TP}, with $D=B_n$, and considering \eqref{rhotil}, we can pass to the limit
 $n \to +\infty$ in \eqref{z7} to obtain
\begin{equation}\label{z7.2}
	   \begin{array}{c}
	\|u_{\lambda}\|_{L^\infty(\mathbb{R}^N)}
	\le C^{S_1}e^{S_2}e^{S_3}
	\max\{1,C_*\tilde{\varrho}\}.
	\end{array}
	\end{equation}
 Since $u_{\lambda}$ is bounded in $\mathbb{R}^N$, we put $M_0:=\|u_{\lambda}\|_{L^\infty(\mathbb{R}^N)}$.
Then, by Lemma~\ref{lem:element},
we have
	\begin{align}
&\int_{B(x_0,R^\prime)} f(u_{\lambda})u_{\lambda}(u_{\lambda})_M^{\beta}\eta^{N}dx\,  
\le C(\varrho)
\|u_{\lambda}\|^{\beta}_{L^{\tilde{p}(N+\beta)}(B(x_0,R^\prime))}B_{R^\prime} 
\label{eq:inf-1}\\
&\int_{B(x_0,R^\prime)} a(x) u_{\lambda}^{q}(u_{\lambda})_M^{\beta}\eta^{N}dx\,  
\le \|a\|_{L^{\gamma_1}(B(x_0,R^\prime))}
M_0^{q}\|u_{\lambda}\|_{L^{\tilde{p}(N+\beta)}(B(x_0,R^\prime))}^{\beta}
B_{R^\prime}.
\label{eq:inf-2}
\\
\intertext{By H\"{o}lder inequality, we obtain} 
&\|u_{\lambda}\|_{L^{\tilde{p}(N+\beta)}(B(x_0,R^\prime))}^{N+\beta}\le
M_0^N\|u_{\lambda}\|_{L^{\tilde{p}(N+\beta)}(B(x_0,R^\prime))}^{\beta}B_{R^\prime}
\label{eq:inf-4}.
\end{align}

Now, fix $x_0\in\mathbb{R}^N$.
It follows from the argument as in the proof of Lemma~\ref{lem:bdd}
with \eqref{eq:inf-1}, \eqref{eq:inf-2} and \eqref{eq:inf-4}
that
\begin{eqnarray}\label{eq:inf-5}
\lefteqn{\|u_{\lambda}\|_{L^{\frac{\overline{p}^*}{N}(N+\beta)}(B(x_0,R))}^{N+\beta}}\\&&
\le 2^N(N+\beta)^NC_*^NB_{R^\prime}(C_{R^\prime}+D_{R,R^\prime}) (M_0+1)^N
\|u_{\lambda}\|_{L^{\tilde{p}(N+\beta)}(B(x_0,R^\prime))}^{\beta} \nonumber
\end{eqnarray}
provided $u_{\lambda}\in L^{\tilde{p}(N+\beta)}(B(x_0,R^\prime))$.
Choose $\gamma_1$, $\tilde{p}$ and define sequences
$\{\beta_n\}$, $\{R_n^\prime\}$ and $\{R_n\}$ as in the proof of
Proposition~\ref{prop:bdd-2}.
Set
$$V_n:=\|u_{\lambda}\|_{L^{\tilde{p}(N+\beta_n)}(B(x_0,R^\prime_n))}^{\beta_n}.$$
The remainder of the proof follows with the same arguments as in in the proof of \cite[Theorem 2]{FMT} to conclude that
\begin{equation}\label{eq:thm2-00}
V_{n}^{\frac{N+\beta_{n-1}}{\beta_{n}}}
\le C (N+\beta_{n-1})^N(C_0+D_{n-1}) V_{n-1}
\end{equation}
with $C:=2^NC_*^NB_0(M_0+1)^N$.
Recall that
$$\beta_{n}+N=P^{-1}(N+\beta_{n-1}) \quad
{\rm and} \quad
\frac{N}{N+\beta_0}=P.
$$

Define
$$
Q_n:=\prod_{k=2}^{n+1} \left(1+\frac{P^k}{1-P^{k}}\right)
= \prod_{k=2}^{n+1} \left(1-P^{k}\right)^{-1}
\quad {\rm and}\quad W_n:=(C_0+D_n).
$$
Then, the inequality \eqref{eq:thm2-00} leads to
\begin{align*}
\mbox{ln} V_n &\le \frac{\beta_{n}}{N+\beta_{n-1}}
\left(\mbox{ln} V_{n-1}+\mbox{ln} [C(N+\beta_{n-1})^N]+\mbox{ln} W_{n-1}\right) \\
&=P^{-1}\left(1-P^{n+1}\right)
\left(\mbox{ln} V_{n-1}+
N\mbox{ln} [C P^{-n+1}(N+\beta_0)]+\mbox{ln} W_{n-1}\right)
\\
&\le
P^{-1}\left(1-P^{n+1}\right)\mbox{ln} V_{n-1}
+NP^{-1}\mbox{ln} [(C+1) P^{-n+1}(N+\beta_0)]+P^{-1}\mbox{ln} W_{n-1}
\\
&\le P^{-n}\left( \prod_{k=1}^n \left(1-P^{k+1}\right)
\right)
\mbox{ln} V_0 +N\sum_{k=1}^n P^{-k}\mbox{ln} [(C+1) P^{-n+k}(N+\beta_0)]
\\
&\qquad +\sum_{k=1}^n P^{-k}\mbox{ln} W_{n-k}
\\
&= P^{-n}Q_n^{-1} \mbox{ln} V_0 +N\sum_{k=1}^n P^{-k}\mbox{ln} [(C+1) P^{-n+k}(N+\beta_0)]
+\sum_{k=1}^n P^{-k}\mbox{ln} W_{n-k}
\end{align*}
for every $n$ because of 
$\mbox{ln} [(C+1) P^{-n+1}(N+\beta_0)]> 0$ and
$\mbox{ln} W_n>0$ for all $n$.
Therefore, we have
\begin{align}
&\mbox{ln} \|u_{\lambda}\|_{L^{\tilde{p}(N+\beta_n)}(B(x_0,R^\prime_n))}
=\frac{\mbox{ln} V_n}{\alpha_n}=\frac{P^n\mbox{ln} V_n}{N+\beta_0-NP^n}
\nonumber \\
&\le \frac{Q_n^{-1}\mbox{ln} V_0}{N+\beta_0-NP^n}
+\frac{\sum_{l=0}^{n-1} P^l \mbox{ln} [(C+1) P^{-l}(N+\beta_0)]}{N+\beta_0-NP^n}
+\frac{\sum_{l=0}^{n-1} P^{l}\mbox{ln} W_{l}}{N+\beta_0-NP^n}.
\label{eq:thm2-1}
\end{align}
Here, taking a sufficiently large positive constant $C^\prime$
independent of $n$, we see that
$$
\sum_{l=0}^{n-1} P^l \mbox{ln} [(C+1) P^{-l}(N+\beta_0)]
\le
C^\prime \sum_{l=0}^\infty P^l (l+1)=:S_1<\infty
$$
and
$$
\begin{array}{ll}
\sum_{l=0}^{n-1} P^{l}\mbox{ln} W_{l}&\le
C^\prime \sum_{l=0}^{n-1} P^{l} (l+1)\le C^\prime \sum_{l=0}^\infty P^{l} (l+1)
=:S_2<\infty.
\end{array}
$$
Next, we shall show that $\{Q_n\}$ is a convergent sequence. It is easily see that $\{Q_n\}$ is increasing.
Moreover,
setting $d_k:=\mbox{ln} \left(1+\frac{P^k}{1-P^{k}}\right)$,
we see that
$$
\begin{array}{ll}
\lim_{k\to\infty}\frac{d_{k+1}}{d_k}
&=
\lim_{k\to\infty}\frac{\mbox{ln} (1-P^{k+1})}{\mbox{ln} (1-P^k)}
=\lim_{k\to\infty}\frac{1-P^k}{1-P^{k+1}}\, P=P<1
\end{array}$$
by L'Hospital's rule.
This implies that
$$
\mbox{ln} Q_n=\sum_{k=2}^{n+1} \mbox{ln} \left(1+\frac{P^k}{1-P^{k}}\right)
\le \sum_{k=1}^\infty \mbox{ln} \left(1+\frac{P^k}{1-P^{k}}\right)
<\infty.
$$
Therefore, $\{Q_n\}$ is bounded from above, whence
$\{Q_n\}$ converges and
$$
1<\frac{1}{1-P^2}=Q_1\le Q_\infty:=\lim_{n\to\infty}Q_n<\infty
$$
holds.
Consequently, letting $n\to\infty$ in \eqref{eq:thm2-1},
we have
\begin{equation}\label{EST1}
\|u_{\lambda}\|_{L^\infty(B(x_0,R_*))} \le (NS_1S_2)^{\frac{1}{N+\beta_0}}
\|u_{\lambda}\|_{L^{\overline{p}^*}(B(x_0,2R_*))}^{\frac{\beta_0}{(N+\beta_0)Q_\infty}}.
\end{equation}
This yields our conclusion
since $\|u_{\lambda}\|_{L^{\overline{p}^*}(B(x_0,2R_*))}\to 0$ as $|x_0|\to \infty$,
$\beta_0>0$ and the constant $NS_1S_2$ is independent of $x_0$.

\subsection{Proof of Corollary \ref{c1}}
Notice that by \eqref{z7.2}, $\|u_{\lambda}\|_{L^\infty(\mathbb{R}^N)}$ is uniformly bounded in the variable $\lambda$. Hence, the constant $C$ in \eqref{eq:thm2-00} is uniformly bounded in the variable $\lambda$.

By \eqref{EST1}, we obtain

$$\|u_{\lambda}\|_{L^\infty(B(x_0,R_*))} \leq (NS_1S_2)^{\frac{1}{N+\beta_0}}
\|u_{\lambda}\|_{L^{\overline{p}^*}(\mathbb{R}^N)}^{\frac{\beta_0}{(N+\beta_0)Q_\infty}}\leq \tilde{C}
\|u_{\lambda}\|_{W^{1,N}(\mathbb{R}^N)}^{\frac{\beta_0}{(N+\beta_0)Q_\infty}},$$
with $\tilde{C}$ independent of $\lambda$. Since $x_0$ is arbitrary,   on has
$$\|u_{\lambda}\|_{L^\infty(\mathbb{R}^N)} \leq  \tilde{C}
\|u_{\lambda}\|_{W^{1,N}(\mathbb{R}^N)}^{\frac{\beta_0}{(N+\beta_0)Q_\infty}}.$$

Thus, \eqref{m1} ensure that 
$$\|u_{\lambda}\|_{L^{\infty}(\mathbb{R}^N)}\rightarrow 0,$$
as $\lambda \rightarrow 0$.

\subsection*{Acknowledgements}
L. F.O Faria was partially supported by FAPEMIG CEX APQ 02374/17. A.L.A. de Araujo was partially supported by FAPEMIG FORTIS-10254/2014 and CNPQ.



\end{document}